 \newtheorem{theorem}{Theorem}[section]
\newtheorem{lemma}{Lemma}[section]
\newtheorem{corollary}{Corollary}[section]
\newtheorem{definition}{Definition}[section]
\newtheorem{remark}{Remark}[section]
\newtheorem{assumption}{Assumption}[section]
\newtheorem{example}{Example}[section]
\newcommand{\footremember}[2]{%
    \footnote{#2}
    \newcounter{#1}
    \setcounter{#1}{\value{footnote}}%
}
\def\B{{\mathbb B}}
\def\N{{\mathbb N}}
\def\R{{\mathbb  R}}
\def\C{\mathbb{C}}
\def\H{{\bf  H}}
\def\I{{\bf  I}}
\def\Q{{\bf  Q}}
\def\v{{\bf v}} 
\def\w{{\bf w}}
\def\z{{\bf z}}
\def\x{{\bf x}}
\def\y{{\bf y}}
\def\m{{\bf m}}
\def\bze{{\boldsymbol  \nu}}
\author{
  Shenglong Zhou\footremember{bjtu1}{School of Mathematics and Statistics, Beijing Jiaotong University, Beijing 100044, China.}~~
  Shuai Li\footnotemark[2] ~~
  Hui Zhang\footremember{qufu}{School of Management Science, Qufu Normal University, Rizhao 276826, China.}~~
  Ziyan Luo\footnotemark[2]
  \footremember{bjtu2}{Email: shlzhou@bjtu.edu.cn, 24110488@bjtu.edu.cn, zhanghuiopt@163.com, luozy@bjtu.edu.cn}
}
\title{\vspace{-1.25cm}
 Sharp-Peak Functions for Exactly Penalizing \\Binary Integer Programming
\vspace{-0.25cm}}
\date{}
\begin{document}
\flushbottom
 
\maketitle
 
\vspace{-1.3cm}

\begin{abstract}  
\noindent \textbf{Abstract:} Unconstrained binary integer programming (UBIP) is a challenging optimization problem due to the presence of binary variables. To address the challenge, we introduce a novel class of functions named sharp-peak functions (SPFs), which equivalently reformulate the binary constraints as equality constraints, giving rise to an SPF-constrained optimization. Rather than solving this constrained reformulation directly, we focus on its associated penalty model.  The established exact penalty theory shows that the global minimizers of UBIP and the penalty model coincide when the penalty parameter exceeds a threshold, a constant independent of the solution set of UBIP. To analyze the penalty model, we introduce Karush-Kuhn-Tucker (KKT) points and a new type of stationarity, referred to as P-stationarity, and provide a comprehensive characterization of its optimality conditions.  We then develop an efficient algorithm called ShaPeak based on the inexact {alternating direction method of multipliers}. 
 {It converges to a P-stationary point at a linear rate or terminates at such a point within finitely many steps. These results are established under appropriate parameter choices and a single mild assumption, namely, the local Lipschitz continuity of the gradient over a bounded box}. Finally, numerical experiments demonstrate its nice performance in comparison to several established solvers.

\vspace{0.3cm} 
 
\noindent{\textbf{Keywords}:} UBIP, Sharp-peak functions, exact penalty theory,  P-stationary points, global convergence, linear convergence rate
\end{abstract}

\numberwithin{equation}{section}

\section{Introduction}\label{Section-Introduction}
{Binary integer programming (BIP) is a fundamental optimization problem with extensive applications across various domains. These include classical combinatorial problems such as maximum satisfiability \cite{bacchus2021maximum,hansen1990algorithms}, as well as graph-based optimization problems such as graph coloring \cite{tabi2020quantum} and Max-Cut \cite{bur02,kochenberger2013solving}.} It has also attracted significant attention in modern machine learning, including adversarial attacks \cite{esm19,wee21}, transductive inference \cite{joa99}, and binary neural networks \cite{las16,qin20}. For additional applications, we refer to \cite{liu23,pan23}. The unconstrained version of BIP is given by
\begin{equation}
\label{UBIP}\min\limits_{\x\in\R^n}~f(\x),~~{\rm s.t.}~~\x\in\{0,1\}^n,\tag{UBIP}
\end{equation}
where function ${f:\R^n\to\R\cup\{\infty\}}$ is continuously differentiable.  However,  UBIP is well-recognized as an NP-hard problem due to the presence of binary variables \cite{gar79,koc14}. Instead of directly solving \eqref{UBIP},  we aim to address the following  sharp-peak function constrained optimization (SPCO), 
\begin{equation}\label{SCO}
\begin{aligned}
\min\limits_{x\in{\R^n}}~ f(\x),~~{\rm s.t.}~~& g(x_i) =0,~0 \leqslant x_i \leqslant 1,~i\in[n], 
\end{aligned} \tag{SPCO}
\end{equation}
where $[n]:=\{1,2,\cdots,n\}$ and $g:\R\to\R\cup\{\infty\}$ is a sharp-peak function as outlined in Definition \ref{def-SPF}. In general, for any $x\in[0,1]$, it satisfies $g(x)\geqslant 0$ and $g(x)=0$ if and only if $x\in\{0,1\}$.

 \begin{table}[!t]
\renewcommand{\arraystretch}{1.0}\addtolength{\tabcolsep}{0pt}
\centering
\caption{Main techniques to process the binary constraints. \label{setupdiff}}
\begin{tabular}{lll}\hline
 Methods & References &Reformulations \\
\hline
\multicolumn{3}{c}{Relaxation methods}\\\hline
 LP relaxation &\cite{Hsieh15,Kom07}& $\{0,1\}^n \approx\{\x\mid 0 \leqslant \x \leqslant 1\}$ \\

 Spectral relaxation &\cite{Cour07,Shi07} & $\{0,1\}^n \approx\left\{\x \mid\|2\x-1\|_2^2=n\right\}$\\

 SDP relaxation &\cite{buc19,Wang16}  & $\{0,1\}^n \approx\left\{\x \mid \textbf{X} \succeq \x \x^\top, \operatorname{diag}( \textbf{X} )=\x\right\}$ \\

Doubly positive relaxation &\cite{Huang14,Wen10} & $\{0,1\}^n \approx\left\{\x \mid  \textbf{X}  \succeq (2\x-1) (2\x-1)^\top, \operatorname{diag}( \textbf{X} )=1\right\}$ \\\hline 
\multicolumn{3}{c}{Equivalent reformulations}\\\hline
 Piecewise &\cite{Zhang07,lucidi2010exact,murray2010algorithm} & $\{0,1\}^n \Leftrightarrow\{\x \mid\x\odot(1-\x)=0\}$ \\ 
 
  $\ell_2$ box &\cite{Murray10,Rag69} & $\{0,1\}^n \Leftrightarrow\left\{\x \mid 0 \leqslant \x \leqslant 1,\|2\x-1\|_2^2=n\right\}$ \\

  $\ell_2$ box MPEC  &\cite{Yuan17} & $\{0,1\}^n \Leftrightarrow\left\{\x \mid 0 \leqslant \x \leqslant 1, \|\v\|^2 \leqslant n,\langle 2\x-1,\v\rangle = n\right\}$ \\


  $\ell_p$ box ($p>0$) &\cite{Wu19} & $\{0,1\}^n \Leftrightarrow\left\{\x \mid0 \leqslant \x \leqslant 1,\|2\x-1\|_p^p=n, p>0\right\}$ \\

$\ell_0$ norm &\cite{yuan2016sparsity} & $\{0,1\}^n \Leftrightarrow\left\{\x \mid\|\x\|_0+\|\x-1\|_0 \leqslant n\right\}$ \\
 SPF & This paper & $\{0,1\}^n \Leftrightarrow\left\{\x \mid  0 \leqslant \x \leqslant 1, g_i(\x)=0, i\in[n]\right\}$ \\
\hline
\end{tabular}
\end{table}
\subsection{Related work}  

Strategies for handling binary constraints can broadly be classified into two categories: relaxation methods and equivalent reformulations, see Table \ref{setupdiff}. We provide a brief overview below.

 Relaxation methods aim to convert binary constraints into continuous optimization models, making them more tractable. One standard approach was linear programming relaxation \cite{Hsieh15, Kom07}, where the binary constraint was replaced by a box constraint. This transformation turned the original NP-hard problem into a convex optimization problem with box constraints, which can be solved efficiently using modern optimization algorithms. Further advancements include spectral relaxation \cite{Cour07, Shi07}, which substituted the binary constraint with a spherical constraint. The resulting problem is then solved through eigenvalue decomposition.  Another notable contribution relaxed the binary constraint by a convex cone to derive the semi-definite programming (SDP) formulations. Then these SDP can be solved by some customized algorithms, such as interior point methods \cite{wolkowicz2012handbook, anjos2011handbook},  quasi-Newton and smoothing Newton methods \cite{Wang16}, a spectral bundle method \cite{helmberg2000spectral}, branch-and-bound algorithms \cite{buchheim2013semidefinite, buc19}.   To refine the relaxation,  a doubly positive relaxation method \cite{Huang14,Wen10} enforced non-negativity constraints on both the eigenvalues and the elements of the SDP solution. Numerical results have demonstrated that this approach can significantly improve the solution quality compared to traditional SDP relaxation methods.
 
 Equivalent reformulations provide another approach to addressing binary constraints in optimization problems. {For example, piecewise constraint $\{\x \mid \x \odot (1-\x)=0\}$ has been widely used in the literature for characterizing binary variables and reformulating binary optimization problems. In particular, it was employed in \cite{Zhang07} to transform the UBIP into an equality-constrained optimization problem that can be treated by penalty methods. See also \cite{lucidi2010exact,murray2010algorithm} for related formulations and algorithms.} Similarly, the authors in \cite{Rag69} reformulated the UBIP as a continuous optimization problem using the $l_2$ norm, allowing it to be solved with second-order interior-point methods \cite{Mar12, Murray10}. Then this approach has been extended by \cite{Yuan17}, where the authors incorporated $l_2$ norm within the framework of mathematical programming with equilibrium constraints. This reformulation led to an augmented biconvex optimization problem with a bilinear equality constraint, which was effectively solved by a penalty method.  In a broader context, \cite{Wu19} proposed a continuous $l_p$-norm (for  $p>0$) boxed reformulation and used the alternating direction method of multipliers to solve it.

\subsection{Contribution and organization} 
In this paper, we introduce a sharp-peak function (SPF) to derive  model \eqref{SCO}. Consequently, our approach serves as an equivalent reformulation, as shown in Table \ref{setupdiff}. Most importantly, the new model provides strong theoretical guarantees and enables the development of an efficient numerical algorithm. In other words, it offers a practical and effective continuous optimization approach for solving the discrete programming problem, \eqref{UBIP}. The main contributions are fourfold.

\textit{1) The invention of SPFs}. We introduce SPFs to equivalently transform binary constraints into equality constraints. These functions are highly general, capable of accommodating a wide range of fundamental functions.  Importantly,  when the penalty method is applied to solve problem \eqref{SCO}, SPFs facilitate the development of a new exact penalty theorem, a result that other existing functions, such as the piecewise function \cite{Zhang07},  $\ell_2$ norm\cite{Murray10,Rag69}, and $\ell_p$ norm with $p>1$ \cite{Wu19}, may fail to achieve. A counterexample is given in \eqref{ex-pi-penalty-1} to illustrate this.

\textit{2) New exact penalty theorems}.  To address  \eqref{SCO}, an equivalent model of \eqref{UBIP}, we investigate its penalty model by showing that the global minimizers to \eqref{SCO} and the penalty model coincide when the penalty parameter exceeds a threshold, a constant that is independent of the solution set of \eqref{SCO}. This contrasts with traditional exact penalty theory, where the penalty parameter typically depends on the solution set of the original problem.  Furthermore, we introduce the concepts of Karush-Kuhn-Tucker (KKT) points and P-stationary points of the penalty model, which allow for a more comprehensive optimality analysis by revealing their relationships with local and global minimizers for \eqref{SCO}, as illustrated in Figure \ref{fig:relation}.  These relationships suggest that targeting a P-stationary point for the penalty problem is a promising way to solve \eqref{UBIP}, leading to an effective algorithm based on the scheme of the P-stationary point.
  
\textit{3) A simple algorithm with convergence guarantees}.  
To solve the penalized problem, we propose an algorithm named ShaPeak, built upon the inexact alternating direction method of multipliers (iADMM) with an adaptively updated penalty parameter. The inexactness in solving subproblems and the adaptive penalty update pose challenges for convergence analysis. Nevertheless, we establish that the whole sequence converges to a P-stationary point  {at a linear rate or terminate at this point within finitely many steps}, provided that parameters are chosen properly and $\nabla f$ is locally Lipschitz continuous on a bounded box.

\textit{4) Superior numerical performance}. We benchmark ShaPeak against several state-of-the-art solvers, including the commercial solver GUROBI, which is designed for quadratic and linear problems. In addition to these problems, ShaPeak is also capable of handling non-quadratic applications effectively. The numerical experiments  demonstrate that ShaPeak achieves superior accuracy and computational efficiency.

The paper is organized as follows. In the next subsection, we introduce the notation and functions used throughout the paper. Section \ref{sec:spfs} defines SPFs and presents two special examples. Section \ref{Section-model} presents the penalty model for \eqref{SCO} and establishes the exact penalty theory. In Section \ref{Section-algorithm}, we develop the algorithm and analyze its convergence properties. The final two sections are dedicated to extensive numerical experiments and concluding remarks.

 \subsection{Preliminaries} \label{Section-pre}
The $1$-dimensional and  $n$-dimensional unit boxes are denoted by
$$B:= \{x\in\R: 0 \leqslant x \leqslant   1\},\qquad \B:=\{\x\in\R^n:  x_i\in B\},$$
where  `$:=$' means `define'. For vector $\x\in \R^n$, we use $\|\x\|$ and $\|\x\|_\infty$  to denote its Euclidean and infinity norms. {For a symmetric matrix $\Q\in \R^{n\times n}$, we use $\|\Q\|$ to denote its spectral norm.} The indicator function of a given set $\Omega$ is defined by  $\delta_\Omega(\x)=0$ if $\x\in\Omega$ and $\delta_\Omega(\x)=\infty$ otherwise. Let $N_{\B}(\x)$ be the normal cone \cite{rock98} of $\B$ at $\x$. Then any  $\v\in N_{\B}(\x)$ satisfies
\begin{equation}\label{normal-cone-B}
v_i \in \begin{cases} ~(-\infty,0],& {\rm if}~x_i=0,\\
~\{0\},& {\rm if}~x_i\in(0,1),\\
~[0,+\infty),& {\rm if}~x_i=1.\end{cases}
\end{equation}
For a lower semi-continuous function {$f:{\mathbb R}^n\rightarrow \R$}, the definition of the (limiting) subdifferential denoted by {$\partial f$ can be found in \cite[Definition 8.3]{rock98}. 
The proximal operator of a function $\varphi:\R^n\to\R$, associated with a parameter $\tau>0$, is defined by
\begin{equation}\label{proximal-varphi}
{\rm Prox}_{\tau \varphi}(\z):=\operatorname*{argmin}\limits_{\x\in \R^n}~ \varphi(\x)+\frac{1}{2\tau}\|\x-\z\|^2.
\end{equation}
In particular, we write
\begin{equation}\label{proximal-varphi-B}
{\rm Prox}_{\tau \varphi}^\B(\z):={\rm Prox}_{\tau \varphi+\delta_\B}(\z).
\end{equation}
 We say function $f$ is {$L$-strong smooth} on $\Omega\subseteq\R^n$ if 
$$ 
f(\x) \leqslant   f(\w)+\langle \nabla f(\w), \x-\w\rangle +  \frac{L}{2}\|\x-\w\|^2, ~~\forall~\x,\w\in \Omega,
$$
where $L>0$. We say function $h:\Omega\to\R^m$ is locally Lipschitz continuous at point $\x\in\Omega$ if 
$$ 
\|h(\w) - h(\v)\|\leqslant   L(\x) \|\w-\v\| $$
holds for any $\w$ and $\v$ around $\x$, where $L(\x)>0$ is the Lipschitz constant relying on $\x$. Then function $h$ is said to be locally Lipschitz continuous on $\Omega$ if it is locally Lipschitz continuous at every point in $\Omega$. By Heine-Borel theorem, one can show that if function $h$ is locally Lipschitz continuous on a compact set $\Omega$, then $h$ is Lipschitz continuous on $\Omega$.  Moreover, if $\nabla f$ is  Lipschitz continuous on $\Omega\subseteq\R^n$ with a constant $L>0$, then $f$ is $L$-strong smooth on $\Omega\subseteq\R^n$.  A function $f$ is said to be $\ell$-strong convex on $\Omega\subseteq\R^n$ if 
$$ 
f(\x) \geqslant   f(\w)+\langle \nabla f(\w), \x-\w\rangle +  \frac{\ell}{2}\|\x-\w\|^2, ~~\forall~\x,\w\in \Omega,
$$
where $\ell>0$. Finally, we define a useful operator by
\begin{equation}\label{operator-Pi}
\Pi(r,s,u,v)= 
\begin{cases}
\{r\},& u<v,\\
\{r,s\},& u=v,\\
\{s\},&u>v.
\end{cases}
\end{equation}

 \section{Sharp-Peak Functions}\label{sec:spfs}
 We now introduce the sharp-peak function (SPF) that can be employed to equivalently reformulate the binary constraint as an equality constraint. {These functions have three properties. First, they give an equivalent reformulation of the binary constraint. Second, their subdifferentials on B have a uniform lower bound, which is key to our exact penalty theory. Third, the subdifferential sum rule on the $\{0,1\}$ helps with the later optimality analysis. Specifically, it is defined as follows.}
 \begin{definition}[Sharp-Peak Functions] \label{def-SPF}Function ${g:\R\to\R\cup\{\infty\}}$ is called a sharp-peak function if it satisfies the following conditions:
\begin{itemize}[leftmargin=22pt]
\item[c1)] For any $x\in B$, $g(x) \geqslant 0$ and $g(x)=0$ if and only if $x\in\{0,1\}$.
\item[c2)] $g$ is lower semi-continuous on $B$ and there exists a  $c>0$ such that
\begin{equation}\label{lower-bd-subdiff}
 |\nu|\geqslant c, ~~\forall~\nu\in\partial g(x),~\forall~x\in B.\end{equation} 
\item[c3)] $\partial (g(x)+\delta_{B}(x))=\partial g(x)+N_{B}(x)$ for any $x\in\{0,1\}$. 
\end{itemize}
 \end{definition}
According to c1) and c2), function $g$  has a sharp peak within  interval $(0,1)$, see functions in Figures \ref{fig:spf-g} and \ref{ex-spf-g}. Therefore, we call it the sharp-peak function. For c2), it allows $\partial g(x)=\emptyset$ for some $x\in B$.  Moreover, from \eqref{lower-bd-subdiff}, there is a uniform positive lower bound $c$, which is not a strict condition. In fact, if ${c_0:=\inf \{|\nu|: \nu\in\partial g(x), ~x\in B\}>0}$, then $c$ can be set as ${c=c_0}$.  For example, $g(x)=1-|2x-1|$ and $c=2$. For c3), its sufficient conditions can be mild, such as $g(x)$ being continuously differentiable or being regular at $x\in\{0,1\}$.

The definition of SPFs can encompass a broad range of basic functions. For example,  $g(x)=1-|2x-1|^p$ with $p\in(0,1]$ is a SPF. Furthermore, its composite function $\psi(g(x))$ is still a SPF if $\psi$ satisfies properties: $$a)  \text{ $\psi(0)=0$,\qquad b) 
 $\psi(t)$ is continuous differentiable on $t\in B$ and $\kappa:=\inf_{t\in B}\psi'(t)>0$}.$$ 
 Indeed, $\kappa>0$ means that $\psi$ is increasing on $B$, thereby $\psi(t)\geq \psi(0)=0$ for any $t\in B$. Moreover,  $g(x)=1-|2x-1|^p\in B$ for any $x\in B$. These two facts and condition a) imply that $\psi(g(x))\geqslant 0$ for any $x\in B$ and $\psi(g(0))=\psi(g(1))=\psi(0)=0$. Therefore, c1) holds.  For for any $x(\neq 1/2)\in B$ if $p\in(0,1)$ and $x \in B$ if $p=1$, by the (limiting) subdifferential chain rule \cite[Corollary 4.6]{mordukhovich2018variational}, $\partial \psi(g(x)) \subseteq \psi'(g(x)) \partial g(x)$, which leads to  $|\psi'(g(x)) \nu|\geqslant \kappa c$ for any $\nu\in \partial g(x)$ and hence $|\zeta|\geqslant \kappa c$ for any $\zeta\in \partial \psi(g(x))$. This verifies c2). Finally, since $g$ is continuously differentiable at $\{0,1\}$, so is $\psi(g(x))$ from condition b), which results in c3) immediately. 

Based on the above claim, a class of SPFs can be given as follows,
\begin{align}\label{spf-g-abs}
\psi(1-|2x-1|^p),~p\in(0,1],
\end{align}
where $\psi$ can be various basic functions, such as, $\psi(t)=t$, $|t|$, $|t+1|^r-1$  with ${r\geq  1}$, $e^t-1$, $\sin(t)$, $\tan(t)$, and  $\log(1+t)$. See their plots in Figure \ref{fig:spf-g}. We point out that $g(x)=1-|2x-1|^p$ with $p>0$ has been studied in \cite{Wu19}. It is a SPF if $p\in(0,1]$ and is not a SPF if $p>1$ due to $g'(1/2)=0$. Moreover, piecewise function $g(x)=x(1-x)$ \cite{Zhang07, lucidi2010exact,murray2010algorithm} is not a SPF as well due to $g'(1/2)=0$.

 \begin{figure}[!t]
 	\centering 	
\includegraphics[scale=0.75]{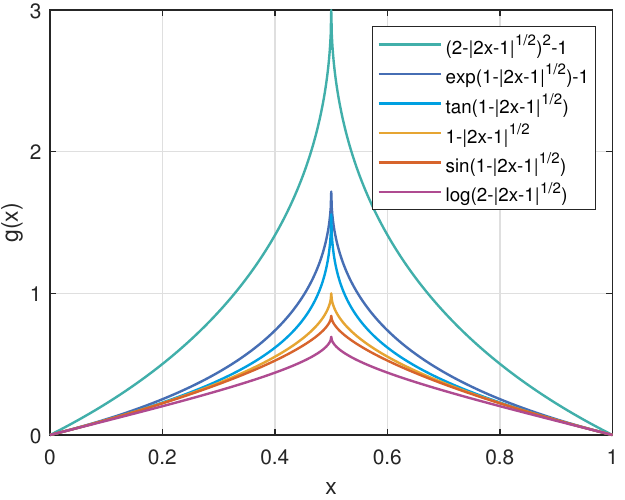}~~~
  \includegraphics[scale=0.75]{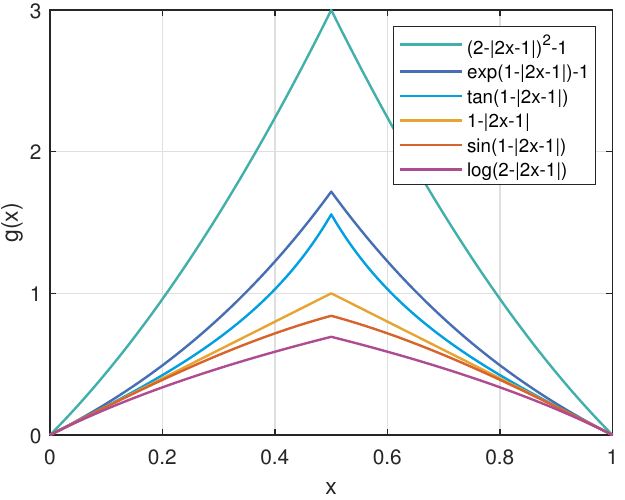}	
 	\caption{{Plots of $\psi(1-|2x-1|^p)$. $\psi(t)=(1+t)^2-1$,~ $e^t-1$, $\tan(t)$, $t$, $\sin(t)$, and $\log(1+t)$. $p=1/2$ (left subfigure) and $p=1$ (right subfigure).}} 
 	\label{fig:spf-g}
 \end{figure}
 
In what follows, we introduce two useful SPFs that cover a broad class of basic functions and facilitate the design of an efficient numerical algorithm.
\begin{lemma} Given $a \geqslant 1, b \geqslant 1, p> 0, q> 0$,  and $ \omega \in[0,1]$, let $g$ be defined by
\begin{equation}\label{def-g-h} 
g(x;\omega,a,b,p,q)=
\begin{cases}\dfrac{ |x+a|^p}{p}-\dfrac{a^p}{p}~~=:g_1(x,a,p),&x< \omega,\\[1.5ex]
\min\Big\{g_1(\omega,a,p),~ g_2(\omega,b,q)\Big\}, &x= \omega,\\[1.5ex]
\dfrac{ |x-1-b|^q}{q}-\dfrac{b^q}{q}~=:g_2(x,b,q),&x> \omega,\\
\end{cases}  
\end{equation} 
and $h$ be defined by
\begin{equation}\label{def-h-h}
h(x;\omega,a,b,p,q)=
\begin{cases}\dfrac{a^p}{p}-\dfrac{|x-a|^p}{p}~~=:h_1(x,a,p),&x < \omega,\\[1ex]
\min\Big\{h_1(\omega,a,p),~ h_2(\omega,b,q)\Big\}, &x= \omega,\\[1.5ex]
\dfrac{b^q}{q}-\dfrac{|x-1+b|^q}{q}~=:h_2(x,b,q),&x> \omega.
\end{cases}
\end{equation} 
Then both functions are SPFs.
\end{lemma}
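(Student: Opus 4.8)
The plan is to verify the three defining conditions c1)--c3) of \cref{def-SPF} for $g(\cdot\,;\omega,a,b,p,q)$; the argument for $h$ is entirely parallel because $h$ has the same structural shape. Indeed, on $[0,1]$ both functions consist of a piece that is strictly increasing from the value $0$ at $x=0$, a piece that is strictly decreasing to the value $0$ at $x=1$, and a ``peak'' value $\min\{\cdot,\cdot\}$ glued at $x=\omega$. I will therefore carry out the details for $g$ and indicate at the end the substitutions $g_1\to h_1$, $g_2\to h_2$ that give the result for $h$.

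First I would settle c1). Since $a\geqslant1$, for $x\in[0,1]$ one has $x+a>0$, so $g_1(x,a,p)=\tfrac1p\big((x+a)^p-a^p\big)$, which is nonnegative and strictly increasing on $[0,\omega]$ with $g_1(0,a,p)=0$; likewise $1+b-x>0$ gives $g_2(x,b,q)=\tfrac1q\big((1+b-x)^q-b^q\big)$, nonnegative and strictly decreasing on $[\omega,1]$ with $g_2(1,b,q)=0$. Hence $g\geqslant0$ on $B$, $g(0)=g(1)=0$, and on $(0,1)\setminus\{\omega\}$ the relevant piece is strictly positive; at an interior $\omega\in(0,1)$ both $g_1(\omega,a,p)$ and $g_2(\omega,b,q)$ are strictly positive, so their minimum is too. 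This yields $g(x)=0$ iff $x\in\{0,1\}$, which is c1).

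Next, c2). Lower semicontinuity is clear on each open piece, and at $x=\omega$ the prescribed value $\min\{g_1(\omega,a,p),g_2(\omega,b,q)\}$ equals $\liminf_{x\to\omega}g(x)$ (the smaller of the two one-sided limits), so $g$ is l.s.c. For the uniform bound, on $(0,\omega)$ we have $g'(x)=(x+a)^{p-1}$ and on $(\omega,1)$ we have $g'(x)=-(1+b-x)^{q-1}$; each is a continuous, nowhere-zero function on a compact interval, hence $|g'(x)|\geqslant c_1:=\min_{x\in[0,\omega]}(x+a)^{p-1}>0$ and $|g'(x)|\geqslant c_2:=\min_{x\in[\omega,1]}(1+b-x)^{q-1}>0$ respectively, which also covers the endpoints $x=0,1$. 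The delicate point is the limiting subdifferential at the peak $x=\omega$, where $g$ may be discontinuous. Writing $s_-:=g_1'(\omega,a,p)>0$ and $s_+:=g_2'(\omega,b,q)<0$, I would distinguish three cases according to the sign of $g_1(\omega,a,p)-g_2(\omega,b,q)$: when $g_1(\omega,a,p)<g_2(\omega,b,q)$ the graph jumps up from the right, the Fr\'echet subdifferential is $[s_-,\infty)$, and since only the left approach keeps the function value at $g(\omega)$, $\partial g(\omega)=[s_-,\infty)$; symmetrically $g_1(\omega,a,p)>g_2(\omega,b,q)$ gives $\partial g(\omega)=(-\infty,s_+]$; and when $g_1(\omega,a,p)=g_2(\omega,b,q)$, $g$ is a continuous peak whose Fr\'echet subdifferential is empty, so the limiting subdifferential reduces to the two one-sided slopes $\{s_-,s_+\}$, matching the $\Pi$/``$\min$'' structure. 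In every case each $\nu\in\partial g(\omega)$ satisfies $|\nu|\geqslant\min\{s_-,|s_+|\}\geqslant\min\{c_1,c_2\}$. Taking $c=\min\{c_1,c_2\}>0$ then verifies \eqref{lower-bd-subdiff}.

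Finally, c3). For $\omega\in(0,1)$, $g$ coincides with the smooth piece $g_1$ in a neighbourhood of $x=0$ (where $x+a=a\neq0$, so no kink) and with $g_2$ near $x=1$ (where $1+b-x=b\neq0$), so $g$ is continuously differentiable at both binary points; by the cited sufficient condition ($C^1$, equivalently regularity) the sum rule $\partial(g+\delta_B)=\partial g+N_B$ holds there. The boundary values $\omega\in\{0,1\}$ I would treat separately, checking directly that the jump at the active endpoint still makes both sides of c3) equal (in fact equal to $\R$). For $h$ the identical analysis applies with $h_1(x,a,p)=\tfrac1p\big(a^p-(a-x)^p\big)$ increasing on $[0,\omega]$, $h_2(x,b,q)=\tfrac1q\big(b^q-(x-1+b)^q\big)$ decreasing on $[\omega,1]$, $h'(x)=(a-x)^{p-1}>0$ and $h'(x)=-(x-1+b)^{q-1}<0$ on the two pieces, and the same one-sided slopes at $\omega$. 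I expect the main obstacle to be the careful computation of the limiting subdifferential at the peak $x=\omega$, since there $g$ can carry a genuine discontinuity and the naive sum of one-sided derivatives must be replaced by the case analysis above; everything else is routine monotonicity and continuity bookkeeping.
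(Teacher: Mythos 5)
Your proposal is correct and follows essentially the same route as the paper: verify c1) by the strict monotonicity of the two pieces, compute the subdifferential on each piece and carry out the three-way case analysis at the peak $x=\omega$ (your sets $[s_-,\infty)$, $\{s_-,s_+\}$, $(-\infty,s_+]$ coincide with the paper's), and check the sum rule c3) at the binary points. The only cosmetic difference is organizational — the paper cases on $\omega\in(0,1)$ versus $\omega\in\{0,1\}$ first and then verifies c2)--c3) within each case, whereas you case on the conditions first and defer the boundary values of $\omega$, arriving at the same computations (including $\partial(g(0)+\delta_B(0))=\R$ when $\omega=0$).
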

\begin{proof} We prove the results for the function $g$ only, as the proof for the function $h$ follows similarly. It is easy to check that both functions are lower semi-continuous on $B$ and satisfy c1) in Definition c1). The remaining parts of c2) and c3) are proved in three cases. 

\underline{Case 1: $\omega\in(0,1)$.} Under such a case, we have
\begin{itemize}[leftmargin=15pt]
\item when  $x\in[0,\omega)$, $\partial g(x;\omega,a,b,p,q)=\{(x+a)^{p-1}\}$;
\item when  $x\in( \omega,1]$, $\partial g(x;\omega,a,b,p,q)=\{-(b+1-x)^{q-1}\}$;
\item when  $x=\omega$, by letting  
$$g_1:=\dfrac{|\omega+a|^p}{p}-\dfrac{a^p}{p}, \qquad g_2:=\dfrac{ |\omega-1-b|^q}{q}-\dfrac{b^q}{q},$$
one can check that
\begin{equation}
\partial g(x;\omega,a,b,p,q)=
\begin{cases}
[(\omega+a)^{p-1},\infty),&g_1< g_2,\\
\{(\omega+a)^{p-1},-(b+1-\omega)^{q-1}\},& g_1 = g_2,\\
(-\infty,-(b+1-\omega)^{q-1}],&g_1 > g_2.
\end{cases}
\end{equation} 
\end{itemize}
Therefore, for any $\nu\in \partial g(x;\omega,a,b,p,q)$ and any $t\in(0,1)$, it follows
\begin{equation*}
 |\nu|\geqslant \min\left\{(\omega+a)^{p-1},a^{p-1},(b+1-\omega)^{q-1},b^{q-1}\right\},
\end{equation*} 
 {which proves c2).} We now verify c3) by the following two cases: If $x=0$,  then 
\begin{eqnarray*}
\begin{aligned}\partial \left(g(0;\omega,a,b,p,q)+\delta_{B}(0)\right)&=(-\infty, a^{p-1}]=  a^{p-1}+(-\infty, 0]\\
&=\partial g(0;\omega,a,b,p,q) + N_{B}(0). 
 \end{aligned}\end{eqnarray*}
 If $x=1$,  then
\begin{eqnarray}\label{diff-decom-1}
\begin{aligned}
\partial \left(g(1;\omega,a,b,p,q) +\delta_{B}(1)\right)&=[b^{q-1},\infty) = b^{q-1}+[0,\infty)\\
  &=\partial g(1;\omega,a,b,p,q) + N_{B}(1).
  \end{aligned}\end{eqnarray}
\underline{Case 2: $\omega=0$.} Under such a case, it holds
\begin{equation*}
\begin{aligned}
g(x;0,a,b,p,q) &=
\begin{cases}0,&~x=0,\\
\dfrac{|x-1-b|^q}{q}-\dfrac{b^q}{q},&~x\in(0,1], 
\end{cases}\\
 \partial g(x;0,a,b,p,q)&=
\begin{cases}[a^{p-1},\infty),&x=0,\\
\{-(b+1-x)^{q-1}\},&x\in(0,1]. 
\end{cases}
  \end{aligned} \end{equation*} 
Therefore, for any $\nu\in\partial g(x;0,a,b,p,q)$ and any $ x\in(0,1),$ the above condition results in
\begin{equation*}
 |\nu|\geqslant \min\left\{a^{p-1},(b+1)^{q-1},b^{q-1}\right\}.
\end{equation*} 
Moreover, one can check that  \eqref{diff-decom-1} holds for $x=1$. For $x=0$, 
\begin{equation*}
\begin{aligned}
\partial \left(g(0;0,a,b,p,q)+\delta_{B}(0)\right)&=(-\infty, \infty)=  [a^{p-1},\infty)+(-\infty, 0]\\
&=\partial g(0;0,a,b,p,q) + N_{B}(0).
  \end{aligned}\end{equation*}
Hence, c2) and c3) are satisfied. 

\underline{Case 3: $\omega=1$.} The proof follows similarly to the case where $\omega=0$.
\end{proof}

Examples of $g(x;\omega,a,b,p,q)$ and $h(x;\omega,a,b,p,q)$ can be found in Figure \ref{ex-spf-g}. When $a=b$, $p=q$, and $\omega\in(0,1)$, both functions are continuous on $B$. In many cases, they are discontinuous at $t=u$. Note that for when $\omega=1$ or $\omega=0$, functions $g$ and  $h$ on $B$ can be expressed as
\begin{equation*}
\begin{aligned}
g(x;0,a,b,p,q)&=
\dfrac{|x-(x)_{0/1}-b|^q}{q}-\dfrac{b^q}{q}, \qquad &&g(x;1,a,b,p,q)=
\dfrac{|x(1-x)_{0/1}+a|^p}{p}-\dfrac{a^p}{p},\\
h(x;0,a,b,p,q)&=
\frac{b^q}{q}-\dfrac{|x-(x)_{0/1}+b|^q}{q}, \qquad &&h(x;1,a,b,p,q)=
\frac{a^p}{p}-\dfrac{|x(1-x)_{0/1}-a|^p}{p},
\end{aligned}\end{equation*}
where $(x)_{0/1}$ is the step function or 0/1 loss function \cite{wang21, zhou21} defined by  
$$(x)_{0/1}=
\begin{cases} 
1,&x>0,\\
0,&x \leqslant 0.
\end{cases}$$ 
To end this section, we derive the proximal operators of $g(x;\omega,a,b,p,q)$ and $h(x;\omega,a,b,p,q)$.

\begin{figure}[!th]
\centering
\includegraphics[scale=0.64]{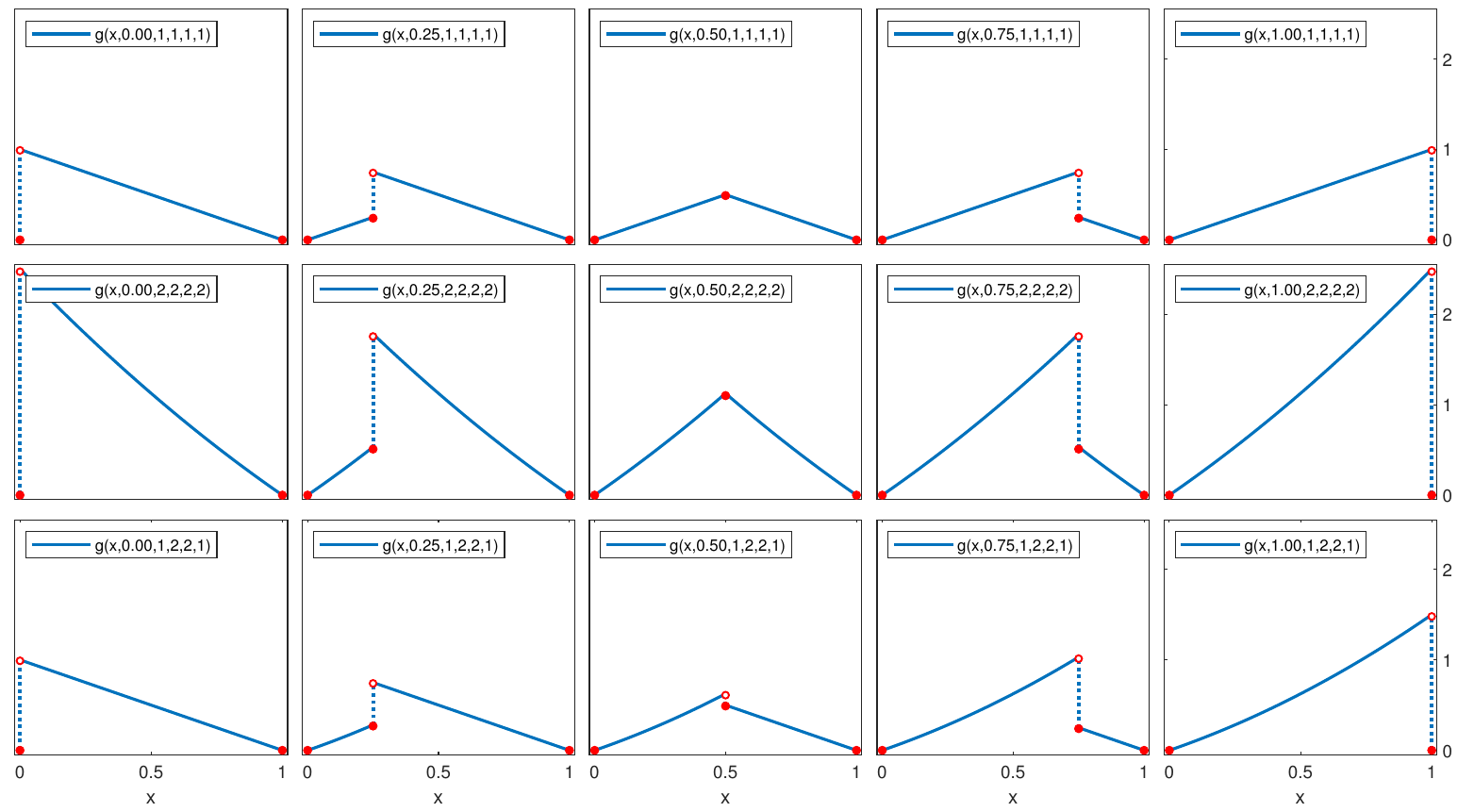}\\[3ex]
\includegraphics[scale=0.64]{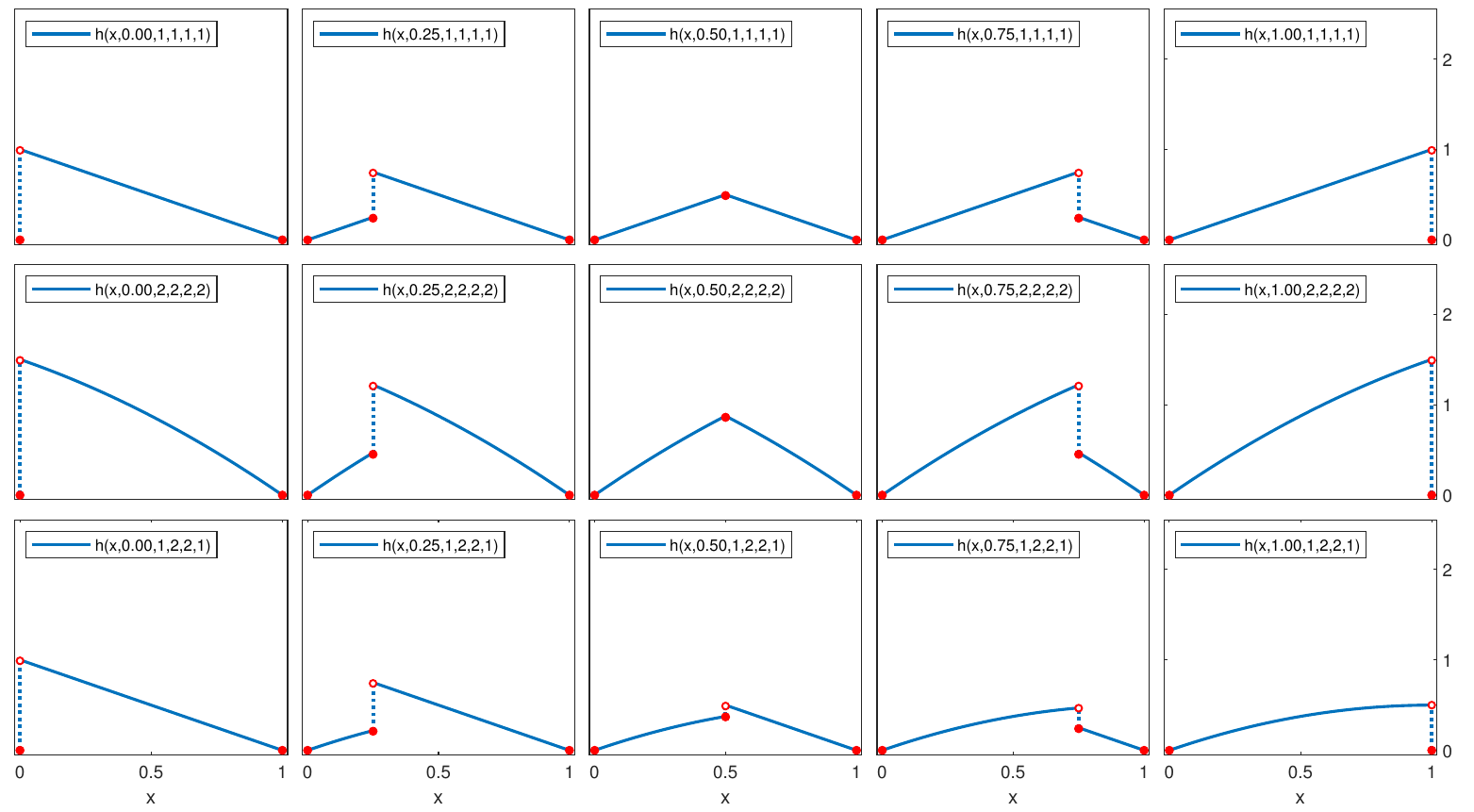}
\caption{Examples of $g(x;\omega,a,b,p,q)$ and $h(x;\omega,a,b,p,q)$. \label{ex-spf-g}}
\end{figure}

\begin{lemma}\label{lemma-prox-gh}Let $g(x;\omega,a,b,p,q)$ and $h(x;\omega,a,b,p,q)$ be defined by (\ref{def-g-h}), and
\begin{equation} 
\begin{aligned}
x_1^*&={\rm argmin}_{x\in[0, \omega]}~\phi_1(x):=\dfrac{(x-z)^2}{2\tau}+\dfrac{(x+a)^p}{p}-\dfrac{a^p}{p}, \\
x_2^*&={\rm argmin}_{x\in[\omega, 1]}~\phi_2(x):=\dfrac{(x-z)^2}{2\tau}+\dfrac{(1+b-x)^q}{q}-\dfrac{b^q}{q},\\
x_3^*&={\rm argmin}_{x\in[0, \omega]}~\phi_3(x):=\dfrac{(x-z)^2}{2\tau}-\dfrac{(a-x)^p}{p}+\dfrac{a^p}{p},\\
x_4^*&={\rm argmin}_{x\in[\omega, 1]}~\phi_4(x):=\dfrac{(x-z)^2}{2\tau}-\dfrac{(x-1+b)^q}{q}+\dfrac{b^q}{q}.
\end{aligned}
\end{equation} 
Then two proximal operators are computed by
\begin{equation*}
\begin{aligned}
{\rm Prox}_{\tau g(\cdot;\omega,a,b,p,q)}^B(z) = \Pi\left(x_1^*,x_2^*, \phi_1(x_1^*),\phi_2(x_2^*)\right),\\[1ex]
{\rm Prox}_{\tau h(\cdot;\omega,a,b,p,q)}^B(z) = \Pi\left(x_3^*,x_4^*, \phi_3(x_3^*),\phi_4(x_4^*)\right),
\end{aligned}
\end{equation*}
{where $\Pi(\cdot,\cdot,\cdot,\cdot)$ is the set-valued operator defined by \eqref{operator-Pi}.}
\end{lemma}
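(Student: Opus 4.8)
The plan is to unfold the definition of the box-restricted proximal operator and exploit the piecewise structure of $g$ across its single breakpoint $\omega$. By \eqref{proximal-varphi} and \eqref{proximal-varphi-B}, computing ${\rm Prox}_{\tau g(\cdot;\omega,a,b,p,q)}^B(z)$ amounts to solving the scalar problem $\min_{x\in B}F(x)$, where $F(x):=g(x;\omega,a,b,p,q)+\tfrac{1}{2\tau}(x-z)^2$ and $B=[0,1]$. First I would partition the feasible interval at $\omega$ into the two closed pieces $[0,\omega]$ and $[\omega,1]$, on which $g$ coincides with the single analytic branches $g_1$ and $g_2$ from \eqref{def-g-h}. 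Consequently $F$ reduces to $\phi_1$ on $[0,\omega)$ and to $\phi_2$ on $(\omega,1]$, so that the two scalar subproblems defining $x_1^*$ and $x_2^*$ capture the behaviour of $F$ away from $\omega$.

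The only point needing care is the breakpoint itself. From \eqref{def-g-h} one has $g(\omega;\omega,a,b,p,q)=\min\{g_1(\omega,a,p),g_2(\omega,b,q)\}$, whence $F(\omega)=\min\{\phi_1(\omega),\phi_2(\omega)\}$. Combining this with the branch identities above yields the key value identity
\begin{equation*}
\min_{x\in[0,1]}F(x)=\min\Big\{\min_{x\in[0,\omega]}\phi_1(x),~\min_{x\in[\omega,1]}\phi_2(x)\Big\}=\min\big\{\phi_1(x_1^*),\phi_2(x_2^*)\big\},
\end{equation*}
where the last equality uses the definitions of $x_1^*$ and $x_2^*$. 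Thus the global minimum of $F$ over $B$ is attained at $x_1^*$, at $x_2^*$, or at both, according to which of the two candidate values $\phi_1(x_1^*)$ and $\phi_2(x_2^*)$ is smaller. Translating this selection rule into the operator $\Pi$ gives the argmin $\{x_1^*\}$ when $\phi_1(x_1^*)<\phi_2(x_2^*)$, $\{x_2^*\}$ when $\phi_1(x_1^*)>\phi_2(x_2^*)$, and $\{x_1^*,x_2^*\}$ in the tie case, which is exactly $\Pi\!\left(x_1^*,x_2^*,\phi_1(x_1^*),\phi_2(x_2^*)\right)$. This establishes the formula for $g$; the formula for $h$ follows verbatim after replacing $(g_1,g_2,\phi_1,\phi_2,x_1^*,x_2^*)$ by $(h_1,h_2,\phi_3,\phi_4,x_3^*,x_4^*)$.

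The main obstacle is purely the bookkeeping at $\omega$: since the two branches are minimized over overlapping closed subintervals, I must confirm that the boundary cases $x_1^*=\omega$ and $x_2^*=\omega$ introduce no spurious minimizer and that the value $F(\omega)=\min\{\phi_1(\omega),\phi_2(\omega)\}$ is recovered consistently. For instance, if $x_1^*=\omega$ and $\phi_1(\omega)<\phi_2(\omega)$, then $F(\omega)=\phi_1(\omega)=\phi_1(x_1^*)$, so $\omega$ remains a genuine global minimizer and $\Pi$ correctly returns $\{\omega\}$. Note that no convexity of the $\phi_i$ is required, as the $x_i^*$ are taken as the given minimizers of the scalar subproblems; the argument rests only on the piecewise identification of $F$ and on the $\min$ defining $g$ at $\omega$.
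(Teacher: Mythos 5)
Your proposal is correct and follows essentially the same route as the paper's (one-line) proof: decompose $B=[0,\omega]\cup[\omega,1]$, identify $F$ with the branch functions $\phi_1,\phi_2$ on the two pieces, and compare $\phi_1(x_1^*)$ with $\phi_2(x_2^*)$ via $\Pi$. Your additional bookkeeping at the breakpoint $\omega$, using $F(\omega)=\min\{\phi_1(\omega),\phi_2(\omega)\}$ to rule out spurious minimizers, is a detail the paper leaves implicit but is entirely consistent with its argument.
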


\begin{proof}  By decomposing $B=[0,\omega]\cup[\omega,1],$ one can just compare $\phi_1(x_1^*)$ and $\phi_2(x_2^*)$ (resp. $\phi_3(x_3^*)$ and $\phi_4(x_4^*)$) to derive ${\rm Prox}_{\tau g}^B$ (resp. ${\rm Prox}_{\tau h}^B$). 
\end{proof}

It can be verified that $x_1^*, x_2^*, x_3^*,$ and $x_4^*$ admit closed-form solutions when $p, q\in\{1/2,2/3,1,2\}$, leading to explicit expressions for the two proximal operators. Table \ref{tab:case-abpq} lists several closed-form solutions for case $p, q\in\{1,2\}$, where $\kappa_1:=\sqrt{(\tau+\tau^2)(2a+1)}-\tau-a\tau$ and $\kappa_2:=\sqrt{(\tau-\tau^2)(2a-1)}+\tau-a\tau$. Moreover, when $\tau$ exceeds a certain threshold, the proximal operators of $g$ and $h$ yield only binary elements, highlighting the effectiveness of SPFs in binary integer programming.

  \begin{table}[p] 
 \renewcommand{\arraystretch}{.98}\addtolength{\tabcolsep}{4pt}
\centering
\caption{Closed forms of proximal operators  ${\rm Prox}_{\tau g }^B(z)$ and  ${\rm Prox}_{\tau h}^B(z)$.} \label{tab:case-abpq}
\begin{tabular}{l llll}\hline
 $(\omega,a,b,p,q)$  & $\qquad \tau\geq {1/2}$&&  $\qquad\qquad 0<\tau \leqslant  {1/2} $\\ \hline
 $g(\cdot,0,1,1,1,1)$   &
 $\left\{\begin{array}{ll}
  0, &z< {1/2}\\
  \{0, 1\}, &z= {1/2}\\ 
   1,&z> {1/2} 
 \end{array}\right.$ &&
 $\left\{\begin{array}{ll}
  0, &z< \sqrt{2\tau}-\tau\\
  \{0, z+\tau\}, \hspace{12mm}&z= \sqrt{2\tau}-\tau\\ 
  z+\tau, &z\in( \sqrt{2\tau}-\tau, 1-\tau)\\ 
   1,&z \geqslant 1-\tau
 \end{array}\right.$ \\ \hline

  $g(\cdot,1,1,1,1,1)$   &
 $\left\{\begin{array}{ll}
  0, &z< {1/2}\\
  \{0, 1\}, &z= {1/2}\\ 
   1,&z> {1/2}
 \end{array}\right.$ &&
 $\left\{\begin{array}{ll}
  0, &z \leqslant  \tau\\
  z-\tau, &z\in( \tau, 1+\tau-\sqrt{2\tau})\\ 
    \{1, z-\tau\}, \hspace{12mm}&z= 1+\tau-\sqrt{2\tau}\\ 
    1,&z> 1+\tau-\sqrt{2\tau}
 \end{array}\right.$ \\ \hline
 
 $g(\cdot,1/2,1,1,1,1)$   &
 $\left\{\begin{array}{ll}
  0, &z<{1/2}\\
  \{0, 1\}, &z= {1/2}\\ 
   1,&z> {1/2}
 \end{array}\right.$ &&
 $\left\{\begin{array}{ll}
  0, &z \leqslant   \tau\\
  z-\tau, &z\in( \tau, {1/2})\\ 
   \{z-\tau, z+\tau\}, \hspace{5mm}&z= {1/2}\\ 
     z+\tau , &z= ({1/2}, 1-\tau)\\ 
    1,&z \geqslant 1-\tau
 \end{array}\right.$ \\\hline

  & $\qquad\tau\geq {{1}/{(2a)}}$&&  $\qquad\qquad 0<\tau \leqslant {{1}/{(2a)}}$\\ \hline
  $g(\cdot,0,a,a,2,2)$    &
 $\left\{\begin{array}{ll}
  0, &z<{1/2}\\
  \{0, 1\}, &z= {1/2}\\ 
   1,&z> {1/2}
 \end{array}\right.$ &&
 $\left\{\begin{array}{ll}
  0, &z<  \kappa_1\\
   \{0 ,  \frac{z+(1+a)\tau}{1+\tau}  \},\hspace{6mm}  &z= \kappa_1\\
     \frac{z+(1+a)\tau}{1+\tau}  , &z\in( \kappa_1, 1-a\tau)\\  
    1,&z \geqslant 1-a\tau
 \end{array}\right.$ \\ \hline
 
  $g(\cdot,1,a,a,2,2)$   &
 $\left\{\begin{array}{ll}
  0, &z<{1/2}\\
  \{0, 1\}, &z= {1/2}\\ 
   1,&z> {1/2}
 \end{array}\right.$ &&
 $\left\{\begin{array}{ll}
  0, &z \leqslant  a\tau\\
  \frac{z-a\tau}{1+\tau} , \hspace{20mm}&z\in( a\tau, 1-\kappa_1)\\ 
   \{ \frac{z-a\tau}{1+\tau} ,  1 \}, &z= 1-\kappa_1\\  
    1,&z> 1-\kappa_1
 \end{array}\right.$ \\ \hline

  $g(\cdot,1/2,a,a,2,2)$   &
 $\left\{\begin{array}{ll}
  0, &z<{1/2}\\
  \{0, 1\}, &z= {1/2}\\ 
   1,&z> {1/2}
 \end{array}\right.$ &&
 $\left\{\begin{array}{ll}
  0, &z \leqslant a\tau\\
  \frac{z-a\tau}{1+\tau} , &z\in( a\tau, {1/2})\\ 
   \{ \frac{z-a\tau}{1+\tau} ,  \frac{z+(1+a)\tau}{1+\tau}  \}, &z= {1/2}\\ 
     \frac{z+(1+a)\tau}{1+\tau}, &z= ({1/2}, 1-a\tau)\\ 
    1,&z \geqslant 1-a\tau
 \end{array}\right.$ \\ \hline
 

  $h(\cdot,0,a,a,2,2)$    &
 $\left\{\begin{array}{ll}
  0, &z<{1/2}\\
  \{0, 1\}, &z= {1/2}\\ 
   1,&z> {1/2}
 \end{array}\right.$ &&
 $\left\{\begin{array}{ll}
  0, &z<  \kappa_2\\
   \{0 ,  \frac{z+(a-1)\tau}{1-\tau}  \}, \hspace{6mm} &z= \kappa_2\\
     \frac{z+(a-1)\tau}{1-\tau}  , &z\in( \kappa_2, 1-a\tau)\\  
    1,&z \geqslant 1-a\tau
 \end{array}\right.$ \\ \hline
 
  $h(\cdot,1,a,a,2,2)$    &
 $\left\{\begin{array}{ll}
  0, &z<{1/2}\\
  \{0, 1\}, &z= {1/2}\\ 
   1,&z> {1/2}
 \end{array}\right.$ &&
 $\left\{\begin{array}{ll}
  0, &z \leqslant  a\tau\\
  \frac{z-a\tau}{1-\tau} ,\hspace{20mm} &z\in( a\tau, 1-\kappa_2)\\ 
   \{ \frac{z-a\tau}{1-\tau} ,  1 \}, &z= 1-\kappa_2\\  
    1,&z> 1-\kappa_2
 \end{array}\right.$ \\ \hline

  $h(\cdot,1/2,a,a,2,2)$    &
 $\left\{\begin{array}{ll}
  0, &z<{1/2}\\
  \{0, 1\}, &z= {1/2}\\ 
   1,&z> {1/2}
 \end{array}\right.$ &&
 $\left\{\begin{array}{ll}
  0, &z \leqslant  a\tau\\
  \frac{z-a\tau}{1-\tau} , &z\in( a\tau, {1/2})\\ 
   \{ \frac{z-a\tau}{1-\tau} ,  \frac{z+(a-1)\tau}{1-\tau}  \}, &z= {1/2}\\ 
     \frac{z+(a-1)\tau}{1-\tau}, &z= ({1/2}, 1-a\tau)\\ 
    1,&z \geqslant 1-a\tau
 \end{array}\right.$ \\\hline
 
\end{tabular}
\end{table}

\section{Exact Penalty Theory} \label{Section-model}
{Let $g$ be a SPF defined by Definition \ref{def-SPF}, where for any $0 \leqslant x_i \leqslant 1$,  $g(x_i) =0$ if and only if $x_i\in\{0,1\}$. Then one can observe that
\begin{equation}\label{def-varphi}
\begin{aligned}
\x\in\{0,1\}^n \quad &\Longleftrightarrow \quad 0 \leqslant x_i \leqslant 1,~~ g(x_i) =0,~~i\in[n],\\
\quad &\Longleftrightarrow \quad \x\in\B, ~~\varphi(\x):=\sum_{i=1}^n g(x_i) =0.
\end{aligned}
\end{equation}
The first equivalence in \eqref{def-varphi} gives rise to the following equivalence,
$$\text{Problem}~ \eqref{UBIP}\quad \Longleftrightarrow \quad \text{Problem}~  \eqref{SCO}.$$
The second equivalence in \eqref{def-varphi} implies that the penalty model of   problem \eqref{SCO} takes the following form}
\begin{equation}
\label{pi-penalty}
\min_{\x\in \B}~F(\x;{\mu}):=f(\x)+{\mu} {\varphi}(\x), \tag{EPM}
\end{equation} 
where $  {\mu}>0$ is chosen as  
\begin{equation}\label{lower-bd-pi}
 \mu >\overline{{\mu}} :=\max_{\x\in\B}\frac{\|\nabla f(\x)\|_\infty}{c}.
\end{equation}
Here, $c$ is relied on $g$ and defined in \eqref{lower-bd-subdiff}. Since $f$ is continuously differentiable, $\nabla f$ is continuous, which by the boundedness of $\B$ leads to the boundedness of $ \overline{{\mu}}$.   
 In the sequel, we show that problem \eqref{pi-penalty} is indeed an exact penalty model of problem \eqref{SCO}. 
\subsection{KKT points}
 \begin{definition} Point $\widetilde{\x}\in \B$ is called a KKT point of problem (\ref{pi-penalty}) if it satisfies
\begin{eqnarray}\label{KKT}
0 \in \nabla f(\widetilde{\x}) +{\mu} \partial \varphi(\widetilde{\x}) + N_{\B}(\widetilde{\x}).
\end{eqnarray}
\end{definition}
Recalling \eqref{normal-cone-B},  $\widetilde{\x}\in\B$ is a KKT point of \eqref{pi-penalty} if and only if there is $\boldsymbol{\nu}\in\partial \varphi(\widetilde{\x})$ satisfying
\begin{eqnarray}\label{KKT-equivalent-fomula}
 \nabla_i f(\widetilde{\x})+{\mu}\nu_i\begin{cases}
 \geqslant 0,& {\rm if}~\widetilde{x}_i=0,\\
= 0 ,& {\rm if}~\widetilde{x}_i\in(0,1),\\
\leqslant 0,& {\rm if}~\widetilde{x}_i=1.
\end{cases}\end{eqnarray}
Our first result establishes the first-order optimality condition for problem \eqref{pi-penalty}.
\begin{theorem}\label{firstKKT} The following statements hold for problem (\ref{pi-penalty}).  
\begin{itemize}[leftmargin=16pt]
\item[1)] Any KKT point is binary. 
\item[2)] A KKT point is a local minimizer if $f$ and $\varphi$  are locally convex around this point.
\item[3)] A local minimizer is also a KKT point. 
\end{itemize}
\end{theorem}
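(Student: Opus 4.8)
The plan is to handle the three statements in the order 1), 3), 2), since the binary conclusion in 1) is a direct magnitude estimate, the necessary condition in 3) carries the main subdifferential calculus, and 2) is a convexity argument that reuses that calculus. For statement 1) I would argue by contradiction: suppose $\widetilde{\x}$ is a KKT point with some fractional coordinate $\widetilde{x}_i\in(0,1)$. Since $\varphi$ is separable, any $\boldsymbol{\nu}\in\partial\varphi(\widetilde{\x})$ satisfies $\nu_i\in\partial g(\widetilde{x}_i)$, and the normal-cone description \eqref{normal-cone-B} forces the interior coordinate into the equality branch of \eqref{KKT-equivalent-fomula}, i.e. $\nabla_i f(\widetilde{\x})+\mu\nu_i=0$. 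Taking absolute values and invoking the uniform lower bound c2), namely $|\nu_i|\geqslant c$, together with $|\nabla_i f(\widetilde{\x})|\leqslant\|\nabla f(\widetilde{\x})\|_\infty\leqslant c\,\overline{\mu}$ read off from the definition \eqref{lower-bd-pi} of $\overline{\mu}$, gives $\mu c\leqslant\mu|\nu_i|=|\nabla_i f(\widetilde{\x})|\leqslant c\,\overline{\mu}$, hence $\mu\leqslant\overline{\mu}$. This contradicts $\mu>\overline{\mu}$, so every coordinate of $\widetilde{\x}$ must lie in $\{0,1\}$.

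For statement 3) I would begin from Fermat's rule: a local minimizer $\widetilde{\x}$ of $F(\cdot;\mu)$ over $\B$ satisfies $0\in\partial(f+\mu\varphi+\delta_\B)(\widetilde{\x})$. Because $f$ is continuously differentiable, the exact sum rule peels off $\nabla f(\widetilde{\x})$, so it remains to show $\partial(\mu\varphi+\delta_\B)(\widetilde{\x})\subseteq\mu\partial\varphi(\widetilde{\x})+N_\B(\widetilde{\x})$. Exploiting separability, $\mu\varphi+\delta_\B=\sum_i\bigl(\mu g(x_i)+\delta_B(x_i)\bigr)$, this reduces to a coordinatewise claim: for an interior coordinate $\delta_B$ vanishes locally, so the inclusion is trivial with $N_B=\{0\}$, whereas for a coordinate equal to $0$ or $1$ the required decomposition $\partial(\mu g+\delta_B)=\mu\partial g+N_B$ is exactly condition c3) (applied to $\mu g$, which is legitimate since $\partial(\mu g)=\mu\partial g$ for $\mu>0$). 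Reassembling the coordinates yields the KKT inclusion \eqref{KKT}.

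For statement 2) I would run the reverse direction under the local-convexity hypothesis. On a convex neighborhood where $f$ and $\varphi$ are convex, the function $f+\mu\varphi+\delta_\B$ is convex, since $\delta_\B$ is convex, and there the limiting subdifferential coincides with the convex one; in particular the KKT condition uses $\partial\varphi$, which equals the convex subdifferential of $\varphi$ at this point. The always-valid inclusion $\partial F_1(\x)+\partial F_2(\x)\subseteq\partial(F_1+F_2)(\x)$ for convex functions then lets me rewrite the KKT relation $0\in\nabla f(\widetilde{\x})+\mu\partial\varphi(\widetilde{\x})+N_\B(\widetilde{\x})$ as $0\in\partial(f+\mu\varphi+\delta_\B)(\widetilde{\x})$; for a locally convex function this stationarity is sufficient for $\widetilde{\x}$ to minimize $F(\cdot;\mu)$ over the neighborhood, i.e. to be a local minimizer.

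I expect the main obstacle to be the subdifferential calculus in 3): justifying the exact coordinatewise decomposition $\partial(\mu g+\delta_B)=\mu\partial g+N_B$ at the binary coordinates, which is precisely where condition c3) does the real work, together with checking that it transfers under the positive scaling by $\mu$. Once this is established, the removal of $\nabla f$ via the smooth sum rule and the separable reassembly are routine, and statements 1) and 2) become comparatively short — the former a one-line bound built on c2) and \eqref{lower-bd-pi}, the latter a standard convex-stationarity argument.
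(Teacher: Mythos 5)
Your proposal is correct and follows essentially the same route as the paper's proof: part 1) via the same magnitude contradiction using c2) and the definition of $\overline{\mu}$ in \eqref{lower-bd-pi}, part 3) via Fermat's rule together with the sum rule for the smooth term and condition c3) to split $\partial(\mu\varphi+\delta_\B)$, and part 2) via the standard local-convexity/stationarity argument (the paper writes it as an explicit gradient inequality rather than a subdifferential inclusion, but the content is identical). Your extra care in 3) about the coordinatewise reduction and the positive scaling $\partial(\mu g)=\mu\partial g$ only makes explicit what the paper leaves implicit.
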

\begin{proof} 1)  Suppose a KKT point $\widetilde{\x}\notin \{0,1\}^n$, namely, there is an $\widetilde{x}_i \in (0,1)$. Then from \eqref{KKT-equivalent-fomula}, $-\nabla_i f (\widetilde{\x})\in \mu \partial g(\tilde{x}_i) $,  which by \eqref{lower-bd-subdiff} leads to  $|\nabla_i f (\widetilde{\x})|\geqslant c\mu$ and further yields a contradiction,
$$c \overline{{\mu}}  =   \max_{\x\in\B}\|\nabla f(\x)\|_\infty  \geqslant   |\nabla_i f(\widetilde{\x})|   \geqslant  c{\mu} >  c\overline{{\mu}}.$$
 
2) Let  $\widetilde{\x}$ be a KKT point. Then $\widetilde{\x}\in \{0,1\}^n$ from 1).  For any $\x\in \B$ around $\widetilde{\x}$,  the local convexity of $f$ and $\varphi$  enables that
\begin{equation*}
\begin{aligned}
 F(\widetilde{\x};{\mu}) -  F(\x;{\mu}) 
 \geqslant    \left\langle \nabla  f(\widetilde{\x})+{\mu} \bze , \widetilde{\x}-\x \right\rangle \geqslant   0,
\end{aligned}
\end{equation*}
where $\bze\in \varphi(\widetilde{\x})$ and the last inequality is from \eqref{KKT}.

3) By Definition \ref{def-SPF} c3), for any $x\in\{0,1\}$, we have $${\partial (g(x)+\delta_{B}(x))=\partial g(x)+N_{B}(x)}.$$ 
For any $x\in(0,1)$, there is $\delta_{B}(x)=0$ and $N_{B}(x)=\{0\}$. This means the above condition holds for any $x\in B$, thereby resulting in
\begin{equation}
	\label{subdiff-rule}
	\partial \Big({\mu} \varphi(\widetilde{\x}) + \delta_\B(\widetilde{\x})\Big)
	= {\mu} \partial  \varphi(\widetilde{\x}) + N_\B(\widetilde{\x}).
\end{equation}

By Fermat's rule \cite[Theorem 10.1]{rock98}, if $\widetilde{\x}$ is a local minimizer of \eqref{pi-penalty}, then
\begin{equation}\label{opt-local-0}
	0 \in \partial \Big(f(\widetilde{\x})+{\mu} \varphi(\widetilde{\x}) + \delta_\B(\widetilde{\x})\Big).
\end{equation}
Since $f$ is continuously differentiable, it holds $\partial f(\widetilde{\x})=\{\nabla f(\widetilde{\x})\}$ and its horizon subdifferential $\partial^\infty f(\widetilde{\x})=\{0\}$. Using these facts, \cite[Corollary 10.9]{rock98}, and  \eqref{subdiff-rule}, we obtain
\begin{align*}
\partial \Big(f(\widetilde{\x})+{\mu} \varphi(\widetilde{\x}) + \delta_\B(\widetilde{\x})\Big)
	&\subseteq \partial f(\widetilde{\x}) +\partial \Big({\mu} \varphi(\widetilde{\x}) + \delta_\B(\widetilde{\x})\Big)\\
	& =  \nabla f(\widetilde{\x})  +\partial \Big({\mu} \varphi(\widetilde{\x}) + \delta_\B(\widetilde{\x})\Big)\\
	& =  \nabla f(\widetilde{\x}) + {\mu} \partial  \varphi(\widetilde{\x}) + N_\B(\widetilde{\x}).
\end{align*}
which by \eqref{opt-local-0} yields that
\begin{equation}\label{opt-local}
	0 \in \nabla f(\widetilde{\x})+\mu\,\partial\varphi(\widetilde{\x})+N_{\mathbb B}(\widetilde{\x}),
\end{equation}
showing the desired result.
\end{proof} 
In the above theorem, the assumption of the local convexity of $\varphi$ around a KKT point is mild. In fact, many SPFs $g(z)$ are locally convex at $z=0$ and $z=1$. Examples include $g(x) = 1 - |2x - 1|$, as well as  function $g(x; \omega, a, b, p, q)$ with any $\omega \in (0,1)$ and $p = 1$ or $2$ defined in \eqref{def-g-h}.  This means $\varphi(\x)=\sum_{i=1}^n g(x_i) $ is locally convex at any $\x\in\B$. Together with the binary nature of KKT points, this local convexity condition suffices to meet the assumption in the theorem.  {Now, we provide an example to illustrate Theorem \ref{firstKKT}, as well as the advantage of using SPFs over some existing functions. In particular, we compare SPFs with the function $g(x)=x(1-x)$, which is widely used in the literature for characterizing binary variables and reformulating binary optimization problems; see, for example, \cite{Zhang07,lucidi2010exact,murray2010algorithm}.}
\begin{example}
Given $s\in(1,2)$, consider the following two problems,
\begin{eqnarray}
\label{ex-pi-penalty-1}&&\min_{x\in B}~ F(x;{\mu})=({1}/{s})\left|x-  {1}/{2}\right|^s+{\mu} x(1-x),\\
\label{ex-pi-penalty-2}&&\min_{x\in B}~ F(x;{\mu})=({1}/{s})\left|x-  {1}/{2}\right|^s+{\mu} g(x;1/2,1,1,2,2).
\end{eqnarray} 
\begin{itemize}[leftmargin=15pt]
\item Problem (\ref{ex-pi-penalty-1}) always has a local minimizer  $1/2$ for any given ${\mu}>0$.  In fact, $\partial F({1}/{2};{\mu})=\{0\}$ for any given ${\mu}$. Moreover, for any sufficiently small $|\varepsilon|$, we have
\begin{eqnarray*}
\begin{aligned} 
F\left(1/2+\varepsilon;{\mu}\right)-F\left(1/2;{\mu}\right)=  \left|\varepsilon\right|^s/s+{\mu} \left(1/4+\varepsilon^2\right) -  {{\mu}}/{4} =  \left( |\varepsilon|^{p-1}/s -{\mu}\right) \varepsilon^2 \geqslant 0 . 
\end{aligned}\end{eqnarray*}
Therefore, $x=1/2$ is a local minimizer of problem  (\ref{ex-pi-penalty-1}).
\item Problem (\ref{ex-pi-penalty-2}) always has binary local minimizers when ${\mu}>\overline{\mu}=2^{1-s}$. In fact, 
\begin{equation*}
0\notin  \partial F(x;{\mu}) =  \left|x- 1/2\right|^{s-1} +{\mu}\begin{cases}
\{x+1\},& x \in(0,1/2),\\
\{-3/2,3/2\},& x =1/2,\\
\{x-2\},& x \in(1/2,1).
\end{cases}
\end{equation*} 
Therefore, the local minimizer of problem  (\ref{ex-pi-penalty-2}) can not lie within $(0,1)$. Two local (also global) minimizers are $x=0$ and $x=1$.
\end{itemize}
\end{example}
 This example illustrates that problem \eqref{pi-penalty} with a penalty like $g(x)=x(1-x)$ may have non-binary local minimizers. In contrast, all local minimizers of the problem with SPFs are binary when ${\mu}$ is over a certain threshold. 
Based on Theorem \ref{firstKKT}, we establish the exact penalty theorem for problems \eqref{SCO} and \eqref{pi-penalty}. 
\begin{theorem}[Exact Penalty Theorem]\label{theorem-ept} A point is a global minimizer of problem (\ref{SCO})  if and only if it is a global minimizer of problem (\ref{pi-penalty}).
\end{theorem}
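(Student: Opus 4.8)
The plan is to reduce the whole statement to the structural fact already proved in \Cref{firstKKT}, namely that \emph{every} minimizer of \eqref{pi-penalty} is binary, and then to exploit that the penalty term vanishes exactly on $\{0,1\}^n$. First I would record two elementary reductions. By condition c1) of \Cref{def-SPF}, the feasible set of \eqref{SCO} is precisely $\{0,1\}^n$, so \eqref{SCO} is the problem of minimizing $f$ over $\{0,1\}^n$; moreover on this set $\varphi(\x)=\sum_i g(x_i)=0$, whence $F(\x;\mu)=f(\x)$ for every binary $\x$. These two observations are what link the two problems.

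Next I would establish that \eqref{pi-penalty} attains its minimum: $\B$ is compact, $f$ is continuous, and each $g$ is lower semi-continuous on $B$ by c2), so $F(\cdot;\mu)$ is lower semi-continuous on $\B$ and Weierstrass' theorem yields a global minimizer $\widehat{\x}$. Being in particular a local minimizer, $\widehat{\x}$ is a KKT point by \Cref{firstKKT}(3), and then \Cref{firstKKT}(1) forces $\widehat{\x}\in\{0,1\}^n$. This is the only place where the threshold $\mu>\overline{\mu}$ enters, through the argument inside the proof of \Cref{firstKKT}. In particular $F(\widehat{\x};\mu)=f(\widehat{\x})$ and $\widehat{\x}$ is feasible for \eqref{SCO}.

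With these facts the two implications are short. For the ``only if'' direction, take a global minimizer $\widehat{\x}$ of \eqref{pi-penalty}; it is binary and feasible for \eqref{SCO}, and for every binary $\y$ one has $f(\widehat{\x})=F(\widehat{\x};\mu)\le F(\y;\mu)=f(\y)$ because $\y\in\B$, so $\widehat{\x}$ minimizes $f$ over $\{0,1\}^n$ and solves \eqref{SCO}. For the ``if'' direction, let $\x^\ast$ solve \eqref{SCO} and let $\widehat{\x}$ be any (necessarily binary) global minimizer of \eqref{pi-penalty}. Then $f(\widehat{\x})\ge\min_{\y\in\{0,1\}^n}f(\y)=f(\x^\ast)$ since $\widehat{\x}$ is binary, while $f(\widehat{\x})=F(\widehat{\x};\mu)\le F(\x^\ast;\mu)=f(\x^\ast)$ since $\x^\ast\in\B$ and $\widehat{\x}$ minimizes $F$ over $\B$. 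Hence $F(\x^\ast;\mu)=f(\x^\ast)=f(\widehat{\x})=\min_{\x\in\B}F(\x;\mu)$, so $\x^\ast$ is a global minimizer of \eqref{pi-penalty}.

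The genuine content is carried entirely by \Cref{firstKKT}: the binariness of every minimizer is what makes the penalty exact with a solution-independent threshold, and once it is in hand the rest is bookkeeping. The only subtlety I anticipate is the ``if'' direction, which cannot be settled by comparing $\x^\ast$ directly against an arbitrary $\x\in\B$ without extra work; routing through the existence of a binary minimizer $\widehat{\x}$ keeps it clean. A more self-contained alternative would instead prove the one-dimensional estimate $g(x)\ge c\,\mathrm{dist}(x,\{0,1\})$ from c2) to get $\varphi(\x)\ge c\,\|\x-\z\|_1$ for the coordinatewise rounding $\z$ of $\x$, combine it with the mean-value bound $|f(\x)-f(\z)|\le c\overline{\mu}\,\|\x-\z\|_1$, and conclude $F(\x;\mu)\ge f(\z)+c(\mu-\overline{\mu})\|\x-\z\|_1\ge f(\x^\ast)$ directly; the delicate point there is the nonsmooth distance bound on $\varphi$, which is precisely the work that \Cref{firstKKT} already absorbs.
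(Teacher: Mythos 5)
Your proof is correct and follows essentially the same route as the paper: both arguments hinge on \Cref{firstKKT} to conclude that every global minimizer of \eqref{pi-penalty} is binary, and then close the loop with the same chain of (in)equalities $F(\widehat{\x};\mu)\leqslant F(\x^*;\mu)=f(\x^*)\leqslant f(\widehat{\x})=F(\widehat{\x};\mu)$ using $\varphi=0$ on $\{0,1\}^n$. The only additions are cosmetic but welcome: you make explicit the Weierstrass existence of a minimizer of \eqref{pi-penalty} (which the paper assumes implicitly), though note your ``if''/``only if'' labels are swapped relative to the theorem's phrasing.
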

\begin{proof} Let $\x^*$ and $\widetilde{\x}$ be global minimizers of problems \eqref{SCO} and \eqref{pi-penalty}, respectively. Then, $\widetilde{\x}$ is a KKT point of problem \eqref{pi-penalty} and thus $\widetilde{\x}\in \mathcal{F}:=\{0,1\}^n$ by Theorem \ref{firstKKT}.   Since the feasible region of \eqref{SCO} is $\mathcal{F}$, it follows $f(\x^*)\leqslant f(\widetilde{\x})$, leading to   
\begin{equation*}
 F(\widetilde{\x};{\mu})  \leqslant  F({\x}^*;{\mu})  
=
f(\x^*)\leqslant f(\widetilde{\x}) 
 =   F(\widetilde{\x};{\mu}) . 
\end{equation*} 
where the first inequality holds because of the  optimality of $\widetilde{\x}$ to problem \eqref{pi-penalty} and ${\x}^*\in\mathcal{F} \subseteq\B$, the two equations hold due to $\varphi({\x}^*)=\varphi(\widetilde{\x})=0$. The above condition implies the four values are the same, concluding the conclusion. \end{proof}  
\begin{remark}
Theorem \ref{theorem-ept} means that the global minimizers to problem  (\ref{SCO}) (equivalent to problem (\ref{UBIP})) and its penalty model (\ref{pi-penalty}) coincide when $\mu>\overline{\mu}$, while threshold $\overline{\mu}$ defined in (\ref{lower-bd-pi}) is  independent of the solution set of (\ref{SCO}). This contrasts with traditional exact penalty theory (e.g., \cite[Theorem 17.3]{Nocedal99}), where the penalty parameter relies on the solution to the original problem. 
{It is worth mentioning that a similar exact penalty theorem was established in \cite{Yuan17}, but under the assumption that $f$ is Lipschitz continuous and convex on $\B$, and the penalty parameter is relied on the Lipschitz constant. In contrast, out Theorem \ref{theorem-ept} is derived without imposing any additional conditions on $f$.  Moreover, although function $g(x)=1-|2x-1|^p$ with ${p\in(0,1]}$, which turns out to be a SPF, was proposed in \cite{Wu19}, its properties have not been thoroughly explored, and the corresponding exact penalty theory has not yet been established.}
 \end{remark}
 \subsection{P-stationary point}
In this subsection,  to facilitate the algorithm development for solving problem \eqref{pi-penalty}, we define a P-stationary point associated with the proximal operator of $\varphi$.
\begin{definition}
Point $\overline{\x}$ is called a P-stationary point of (\ref{pi-penalty}) if there is a $\tau>0$ such that
\begin{equation}\label{Pstationary}
\begin{aligned}
\overline{\x} &\in {\rm Prox}_{\tau{\mu} \varphi}^\B\Big(\overline{\x} - \tau \nabla f(\overline{\x})\Big)\\
& =  {\rm argmin}_{\z\in\B}~\frac{1}{2}\| \z -(\overline{\x}- \tau \nabla f(\overline{\x}))\|^2+\tau{\mu} \varphi(\z).
\end{aligned}\end{equation} 
\end{definition}
It is worth noting that many SPFs $g$ enable a closed-form of ${\rm Prox}_{\tau{\mu} \varphi}^\B$, see Lemma \ref{lemma-prox-gh}, thereby enabling the design of an efficient numerical algorithm presented in the next section. By applying similar reasoning to derive \eqref{opt-local}, we obtain the necessary optimality condition of \eqref{Pstationary} as follows, 
\begin{equation} \label{opt-P-sta}
\begin{aligned}
0&\in \partial\left(\frac{1}{2}\|\overline{\x} -(\overline{\x}- \tau \nabla f(\overline{\x}))\|^2 + \tau{\mu}  \varphi(\overline{\x}) + \delta_\B(\overline{\x})\right)\\
&\subseteq   \tau \nabla f(\overline{\x})  +  \tau{\mu}  \partial  \varphi(\overline{\x}) + N_\B(\overline{\x}),
\end{aligned}\end{equation} 
which implies \eqref{KKT}. So  a P-stationary point is a KKT point. The following result establishes the relationships among P-stationary points, KKT points, and local minimizers of  (\ref{pi-penalty}).

 \begin{theorem}\label{firstPST} The following relationships hold for problem (\ref{pi-penalty}).  
\begin{itemize}[leftmargin=16pt]
\item[1)] A  P-stationary point is a KKT point. Conversely, a KKT point is a P-stationary point if $\varphi$ is locally convex around this point. 
\item[2)] A local minimizer is a P-stationary point if either $f$ and  $\varphi$ are locally convex around this point or $f$ is $L$-strongly smooth on $\B$. 
\item[3)] A  P-stationary point with $\tau \geqslant 1/\ell$ is a global minimizer if $f$ is $\ell$-strongly convex on $\B$. 
\end{itemize} 
\end{theorem}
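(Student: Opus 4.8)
The plan is to treat the three items separately, since part~3) is a direct computation while parts~1) and 2) both rely on the same device for passing from a local to a global statement about the proximal subproblem. For part~3), I would start from the defining inclusion \eqref{Pstationary}: since $\overline{\x}$ lies in the argmin set, for every $\z\in\B$ we have $\frac12\|\overline{\x}-(\overline{\x}-\tau\nabla f(\overline{\x}))\|^2+\tau\mu\varphi(\overline{\x})\leqslant \frac12\|\z-(\overline{\x}-\tau\nabla f(\overline{\x}))\|^2+\tau\mu\varphi(\z)$. Expanding both squares, cancelling the common term $\frac{\tau^2}{2}\|\nabla f(\overline{\x})\|^2$, and dividing by $\tau>0$ yields $\langle\nabla f(\overline{\x}),\z-\overline{\x}\rangle+\mu\varphi(\z)-\mu\varphi(\overline{\x})+\frac{1}{2\tau}\|\z-\overline{\x}\|^2\geqslant0$. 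I would then invoke $\ell$-strong convexity of $f$, namely $f(\z)\geqslant f(\overline{\x})+\langle\nabla f(\overline{\x}),\z-\overline{\x}\rangle+\frac{\ell}{2}\|\z-\overline{\x}\|^2$, to eliminate the inner-product term and obtain $F(\z;\mu)-F(\overline{\x};\mu)\geqslant(\frac{\ell}{2}-\frac{1}{2\tau})\|\z-\overline{\x}\|^2$. The hypothesis $\tau\geqslant1/\ell$ renders the coefficient nonnegative, so $F(\z;\mu)\geqslant F(\overline{\x};\mu)$ on all of $\B$, i.e. $\overline{\x}$ is a global minimizer.

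For part~1), the forward implication (P-stationary $\Rightarrow$ KKT) is already contained in the derivation of \eqref{opt-P-sta}: Fermat's rule for the proximal subproblem gives $0\in\tau\nabla f(\overline{\x})+\tau\mu\partial\varphi(\overline{\x})+N_\B(\overline{\x})$, and dividing by $\tau$ recovers \eqref{KKT}. The converse is the first nontrivial point. Given a KKT point $\widetilde{\x}$, the binary property in Theorem~\ref{firstKKT}\,1) forces $\widetilde{\x}\in\{0,1\}^n$, and the proximal subproblem in \eqref{Pstationary} separates across coordinates because $\varphi$, $\delta_\B$, and the quadratic are all separable. I would therefore reduce to the one-dimensional problem $\min_{z\in[0,1]}\frac{1}{2\tau}(z-c_i)^2+\mu g(z)$ with $c_i=\widetilde{x}_i-\tau\nabla_if(\widetilde{\x})$ and, say, $\widetilde{x}_i=0$. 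The KKT relation \eqref{KKT-equivalent-fomula} says the one-sided derivative of this objective at the endpoint is nonnegative, and local convexity of $g$ (hence of $\varphi$) at $\widetilde{x}_i$ makes the endpoint a local minimizer of the one-dimensional objective; choosing $\tau$ small forces the quadratic $\frac{1}{2\tau}(z-c_i)^2$ to dominate $\mu g(z)$ away from the endpoint (using $g\geqslant0$ on $B$ and $g>0$ on $(0,1)$ from c1)), so the local minimizer is in fact global. Reassembling the coordinates gives $\widetilde{\x}\in{\rm Prox}^\B_{\tau\mu\varphi}(\widetilde{\x}-\tau\nabla f(\widetilde{\x}))$.

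Part~2) then splits according to its two hypotheses. Under local convexity of both $f$ and $\varphi$, a local minimizer is a KKT point by Theorem~\ref{firstKKT}\,3), and the converse of part~1) upgrades it to a P-stationary point. Under $L$-strong smoothness of $f$, I would work directly with the surrogate $S(\z):=\langle\nabla f(\widetilde{\x}),\z-\widetilde{\x}\rangle+\frac{1}{2\tau}\|\z-\widetilde{\x}\|^2+\mu\varphi(\z)$, whose minimizers over $\B$ coincide with the proximal set (a local minimizer of $F$ is KKT, hence binary, so $\varphi(\widetilde{\x})=0$). The descent inequality $\langle\nabla f(\widetilde{\x}),\z-\widetilde{\x}\rangle\geqslant f(\z)-f(\widetilde{\x})-\frac{L}{2}\|\z-\widetilde{\x}\|^2$ together with $\tau\leqslant1/L$ gives $S(\z)\geqslant F(\z;\mu)-f(\widetilde{\x})$, while $S(\widetilde{\x})=F(\widetilde{\x};\mu)-f(\widetilde{\x})=0$. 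Any global minimizer $\z^*$ of $S$ satisfies $\|\z^*-\widetilde{\x}\|\leqslant 2\tau\|\nabla f(\widetilde{\x})\|$ (from $S(\z^*)\leqslant S(\widetilde{\x})$ and $\varphi\geqslant0$), so for $\tau$ small enough $\z^*$ lands in the neighborhood where $\widetilde{\x}$ locally minimizes $F$; there $S(\z^*)\geqslant F(\z^*;\mu)-f(\widetilde{\x})\geqslant F(\widetilde{\x};\mu)-f(\widetilde{\x})=S(\widetilde{\x})$, which combined with $S(\z^*)\leqslant S(\widetilde{\x})$ forces $\widetilde{\x}$ into the argmin.

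The recurring obstacle is that P-stationarity demands $\overline{\x}$ be a \emph{global} minimizer of the proximal subproblem, whereas the available hypotheses (KKT, local convexity, local minimality) are local; since each $g$ is genuinely nonconvex on $[0,1]$, local convexity cannot be promoted to global optimality by itself. The resolving idea is the freedom in choosing $\tau$ in the definition of a P-stationary point: taking $\tau$ small lets the strongly convex quadratic $\frac{1}{2\tau}\|\cdot\|^2$ localize every candidate minimizer within $O(\tau)$ of the binary point, after which the local analysis closes the gap. I expect the step requiring the most care to be making this localization quantitative and uniform across coordinates, i.e. pinning down the threshold on $\tau$ using only c1)--c2) of Definition~\ref{def-SPF} and the bound \eqref{lower-bd-subdiff}.
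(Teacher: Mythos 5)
Your proposal is correct and follows essentially the same route as the paper: part 3) is the identical strong-convexity computation, part 1)'s converse and part 2)'s strong-smoothness case both rest on the same prox-inequality plus the bound $\|\z-\overline{\x}\|\leqslant 2\tau\|\nabla f(\overline{\x})\|$ to localize the proximal minimizer for small $\tau$, which is exactly the paper's mechanism (stated in vector form there rather than coordinate-wise, and comparing $F$-values directly rather than through your surrogate $S$). The only presentational differences are your coordinate-by-coordinate reduction of the proximal subproblem and the surrogate-function phrasing in part 2); the underlying inequalities coincide with the paper's.
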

\begin{proof} 1) It follows from  \eqref{opt-P-sta} that a  P-stationary point is a KKT point.   Conversely, let $\widetilde{\x}$ be a KKT point of \eqref{pi-penalty}. Then $ {\widetilde{\x}}\in\{0,1\}^n$ by Theorem \ref{firstKKT}. Let $\z$ satisfy
\begin{equation} \label{z-prox-x}
\begin{aligned}
 \z \in {\rm Prox}_{\tau{\mu} \varphi}^\B (\widetilde{\x} - \tau \nabla f(\widetilde{\x}) ).
\end{aligned}\end{equation} 
By the definition of ${\rm Prox}_{\tau{\mu} \varphi}^\B$, we obtain
\begin{equation*} 
\begin{aligned}
 \| \z -(\widetilde{\x}- \tau \nabla f(\widetilde{\x}))\|^2+2\tau{\mu} \varphi(\z) \geqslant    \| \widetilde{\x} -(\widetilde{\x}- \tau \nabla f(\widetilde{\x}))\|^2+2\tau{\mu} \varphi(\widetilde{\x}), 
\end{aligned}\end{equation*} 
which by the local convexity of $\varphi$, \eqref{KKT}, and $\z\in\B$ results in 
\begin{equation} \label{cond-P-sta}
\begin{aligned}
- \| \z - \widetilde{\x} \|^2&\geqslant   2\tau \langle \z - \widetilde{\x}, \nabla f({\x}) \rangle  + 2\tau {\mu} (\varphi(\z)-\varphi(\widetilde{\x}))\\
&\geqslant   2\tau \langle \z - \widetilde{\x}, \nabla f(\widetilde{\x}) +{\mu}\bze\rangle \geqslant   0,
\end{aligned}\end{equation} 
where $\bze\in\partial\varphi(\widetilde{\x})$. The above condition delivers  $\z = \widetilde{\x} $. Therefore, $\widetilde{\x}$ is a P-stationary point.

2) If $f$ and  $\varphi$ are locally convex around this point, then from Theorem \ref{firstKKT}, a local minimizer is a KKT, which by 1) indicates that it is a P-stationary point. Now we consider the case of strong smoothness of $f$ on $\B$. Let $\x$ be a local minimizer of \eqref{pi-penalty} and $\z$ satisfy $ \z \in {\rm Prox}_{\tau{\mu} \varphi}^\B ( {\x} - \tau \nabla f({\x}) ).$ Then it is a KKT point and $ {\x}\in\{0,1\}^n$ from Theorem \ref{firstKKT}, thereby $\varphi(\x)=0$.  Similar reasoning allows us to derive  the first inequality in \eqref{cond-P-sta}, namely,
\begin{equation*} 
\begin{aligned}
 \| \z - {\x} \|^2 &\leqslant   2\tau\langle {\x}-\z , \nabla f({\x}) \rangle  +  2\tau{\mu} (\varphi(\x)-\varphi(\z))\\
 &\leqslant   2\tau\|\z- {\x}\|\| \nabla f({\x})\|,
\end{aligned}\end{equation*} 
where the second inequality is due to $\varphi(\x)=0$ and $\varphi(\z)\geqslant  0$, which implies $\| \z - {\x} \|\leqslant   2\tau \| \nabla f({\x})\|$ and thus $\z$ is around ${\x}$ when $\tau$ is sufficiently small. 
By the $L$-strong smoothness of $f$ on $\B$ and the first inequality in \eqref{cond-P-sta},
 \begin{equation*} 
\begin{aligned}
 2F(\z;{\mu})- 2F(\x;{\mu}) 
\leqslant   &~2\langle \z - {\x}, \nabla f({\x}) \rangle  + 2{\mu} (\varphi(\z)-\varphi(\x))+L\| \z - {\x} \|^2\\
\leqslant  &~\left({L}-{1}/{\tau }\right)\| \z - {\x} \|^2 \leqslant   0,
\end{aligned}\end{equation*}
where the last inequality is because $\tau$ is sufficiently small.
The above condition leads to  $\z = {\x} $ due to the local optimality of $\x$, namely, $\x$ is a P-stationary point. 

3) Let $\overline{\x}$ be a P-stationary point.  Then it satisfies
\begin{equation*} 
\begin{aligned}
 \overline{\x} \in {\rm Prox}_{\tau{\mu} \varphi}^\B (\overline{\x} - \tau \nabla f(\overline{\x}) ).
\end{aligned}\end{equation*} 
By the definition of ${\rm Prox}_{\tau{\mu} \varphi}^\B$, we obtain
\begin{equation*} 
\begin{aligned}
 \| \overline{\x} -(\overline{\x}- \tau \nabla f(\overline{\x}))\|^2+2\tau{\mu} \varphi(\overline{\x}) \leqslant  \| \w -(\overline{\x}- \tau \nabla f(\overline{\x}))\|^2+ 2\tau{\mu} \varphi(\w), 
\end{aligned}\end{equation*} 
for any $\w\in\B$,  which results in 
\begin{equation}  
\begin{aligned}
2\langle \overline{\x} - {\w}, \nabla f(\overline{\x}) \rangle  + 2{\mu} (\varphi(\overline{\x})-\varphi(\w))  &\leqslant  ( {1}/{\tau })\| \w - \overline{\x} \|^2. 
\end{aligned}\end{equation} 
Using the above condition and the $\ell$-strong convexity of $f$, we have
 \begin{equation*} 
\begin{aligned}
 2F(\overline{\x};{\mu}) - 2F(\w;{\mu}) 
\leqslant  &~2\langle \overline{\x} - {\w}, \nabla f(\overline{\x}) \rangle  + 2{\mu} (\varphi(\x)-\varphi(\w))- \ell\| \w - \overline{\x} \|^2\\
\leqslant &~\left({1}/{\tau }-{\ell}\right)\| \w - \overline{\x} \|^2 \leqslant 0,
\end{aligned}\end{equation*}
due to $\tau \geqslant 1/\ell$. Therefore, $\overline{\x}$ is global minimizer of \eqref{pi-penalty}. 
\end{proof} 
Based on Theorems \ref{firstKKT}$-$\ref{firstPST}, we can establish the relationships among different solutions to problems \eqref{SCO} and \eqref{pi-penalty}. These relationships suggest that pursuing a P-stationary point for problem \eqref{pi-penalty} is a promising way to find a solution to \eqref{SCO} or \eqref{UBIP}.
\begin{corollary}\label{cor-rel} The relationships among solutions to problems (\ref{UBIP}), (\ref{SCO}), and (\ref{pi-penalty}) are given in Figure \ref{fig:relation}. For example, global minimizers of three problems coincide;  a global minimizer of (\ref{pi-penalty})  is a P-stationary point if either $f$ and $\varphi$ are locally convex or $f$ is strongly smooth on $\B$; a P-stationary point of problem (\ref{pi-penalty}) is also a global minimizer if $f$ is strongly convex on $\B$.
\end{corollary}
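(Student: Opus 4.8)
The plan is to treat this corollary as an assembly of the three theorems already proved, since it merely collects the pairwise relationships displayed in Figure \ref{fig:relation}. I would organize the argument around the individual arrows, handling first the equivalences among global minimizers and then the links between global minimizers and P-stationary points, citing Theorems \ref{firstKKT}, \ref{theorem-ept}, and \ref{firstPST} for each.

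First I would close the only genuinely new gap, namely the equivalence between \eqref{UBIP} and \eqref{SCO}. Both problems share the objective $f$, so it suffices to show their feasible regions coincide. The feasible region of \eqref{SCO} is $\{\x\in\B: g(x_i)=0,\ i\in[n]\}$, and by condition c1) of Definition \ref{def-SPF} we have $g(x_i)=0$ if and only if $x_i\in\{0,1\}$; hence this region is exactly $\{0,1\}^n$, the feasible region of \eqref{UBIP}. Consequently the two problems have identical global minimizers. Chaining this with Theorem \ref{theorem-ept}, which identifies the global minimizers of \eqref{SCO} and \eqref{pi-penalty}, yields the claim that the global minimizers of all three problems coincide.

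Next I would read off the remaining example relationships directly from Theorem \ref{firstPST}. Since every global minimizer of \eqref{pi-penalty} is in particular a local minimizer, part 2) of Theorem \ref{firstPST} shows that it is a P-stationary point whenever $f$ and $\varphi$ are locally convex around it or $f$ is $L$-strongly smooth on $\B$. Conversely, part 3) of the same theorem shows that a P-stationary point is a global minimizer when $f$ is $\ell$-strongly convex on $\B$. The supporting arrows of Figure \ref{fig:relation} --- that a P-stationary point is a KKT point and that, under local convexity, the converse holds, together with the binarity of KKT points and the local-minimizer/KKT correspondence --- are supplied verbatim by Theorems \ref{firstKKT} and \ref{firstPST}(1).

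The one point requiring care, rather than pure citation, is the parameter condition in the last implication: a P-stationary point only guarantees the existence of some $\tau>0$ in \eqref{Pstationary}, whereas Theorem \ref{firstPST}(3) concludes global optimality only for $\tau\geqslant 1/\ell$. I would therefore state this arrow with the qualification $\tau\geqslant 1/\ell$, matching the label carried by the corresponding edge in Figure \ref{fig:relation}; this is the main (and essentially the only) subtlety, as the rest of the corollary is a direct consolidation of the preceding results.
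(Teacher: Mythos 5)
Your proposal is correct and follows essentially the same route as the paper, which offers no explicit proof but presents the corollary as a direct consolidation of Theorems \ref{firstKKT}, \ref{theorem-ept}, and \ref{firstPST}; your observation that \eqref{UBIP} and \eqref{SCO} share the feasible set $\{0,1\}^n$ by condition c1) is exactly the implicit step needed. Your added caveat about the $\tau\geqslant 1/\ell$ qualification on the P-stationary-to-global-minimizer arrow is a fair and careful reading of Theorem \ref{firstPST}(3).
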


 \begin{figure}[H]
 \centering
 \includegraphics[scale=.65]{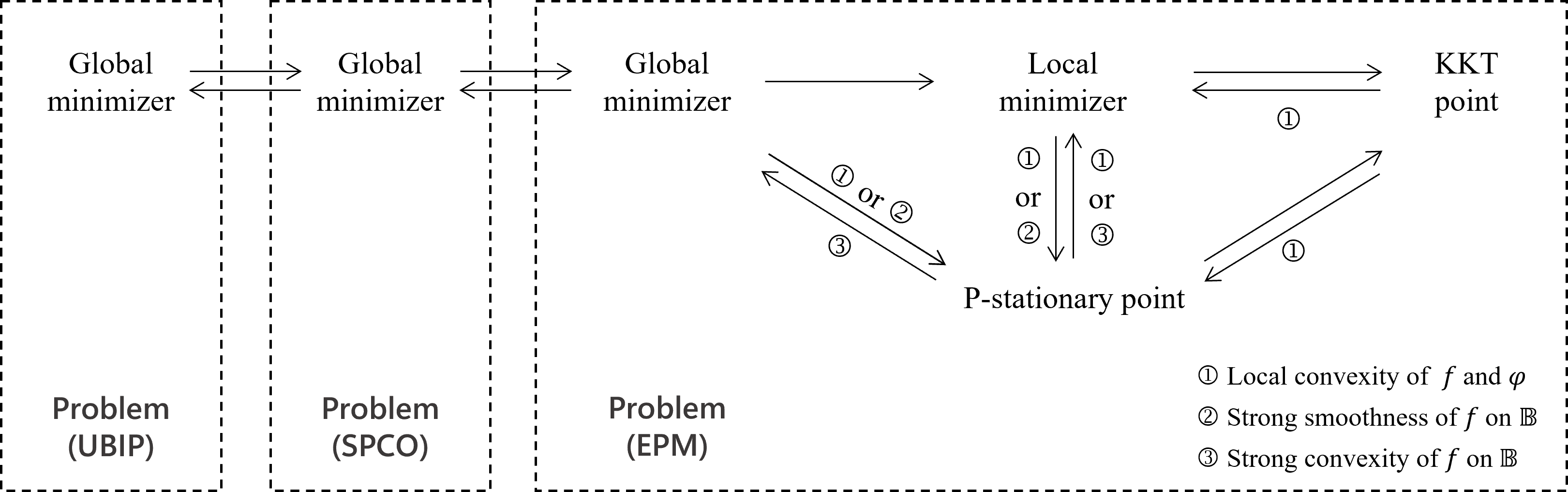}
 \caption{Relationships among different points for thee problems.\label{fig:relation}}
 \end{figure}

\begin{example}
Consider the following quadratic binary problem,
\begin{eqnarray}\label{ex-qbip}
\min_{\x\in\{0,1\}^2}~f(\x):=\frac{1}{2}\left\langle \x, {\bf Q} \x \right\rangle+\left \langle {\bf q}, \x \right\rangle,\qquad
\text{with}~~{\bf Q} = 
\left[\begin{array}{rr}
1 &-1\\-2& 0
\end{array}
\right],~~{\bf q} = 
\left[\begin{array}{r}
-2\\ 1/2
\end{array}
\right].
\end{eqnarray} 
We aim to solve the following penalty model,
\begin{eqnarray}\label{ex-qbip-epm}
\min_{\x\in\B }~ F(\x;{\mu}):=\frac{1}{2} \left\langle \x, {\bf Q} \x \right\rangle+\left \langle {\bf q}, \x \right\rangle+{\mu} \varphi(\x), 
\end{eqnarray} 
where $\varphi(\x)= g(x_1;1/2,1,1,1,1)+g(x_2;1/2,1,1,1,1)$ and
\begin{eqnarray*}
 g(x;1/2,1,1,1,1)= 
\begin{cases}
x,&  {\rm if}~x \leqslant 1/2,\\
1-x,&  {\rm if}~x \geqslant 1/2. 
\end{cases} 
\end{eqnarray*} \end{example}
The objective is strongly smooth but is nonconvex since  ${\bf Q}$ has two eigenvalues $-1$ and $2$. One can verify that ${c=1}$ and ${\overline{\mu}=3}$. Therefore, according to Theorem \ref{theorem-ept}, when ${\mu>3}$, global minimizers to problems (\ref{ex-qbip})  and (\ref{ex-qbip-epm}) coincide. The global minimizer is ${\x^*=(1,1)^\top}$. Now, we verify the relationships between the global minimizer and a P-stationary point.  By fixing ${\mu=4}$ and ${\tau=1/2}$,   
one can check that
\begin{equation} \label{z-x-half}
{\rm Prox}_{2\varphi}^\B\left(\x - \nabla f(\x)/2\right) =  
\begin{cases}
\{(1,0)^\top\},&  {\rm if}~\x=(0,0)^\top,\\
\{(1,1)^\top\},&  {\rm if}~\x=(1,0)^\top, (0,1)^\top,~{\rm and}~ (1,1)^\top.
\end{cases}
\end{equation} 
Therefore, $\x^*$ is the only point  satisfying
\begin{equation*} 
\x^*\in{\rm Prox}_{2\varphi}^\B\left(\x^* - \nabla f(\x^*)/2\right),  
\end{equation*} 
namely, the only  P-stationary point with $\tau=1/2$ is $\x^*$. This justifies the relationship in Corollary \ref{cor-rel}  that a global minimizer of problem (\ref{pi-penalty})  is a P-stationary point if $f$ is strongly smooth on $\B$.

 \section{Algorithm and convergence} \label{Section-algorithm}
Rewrite problem \eqref{pi-penalty} as
\begin{equation}
\label{pi-penalty-w}
\min_{\x,\w}~f(\x)+{\mu} {\varphi}(\w), ~~~ \text{s.t.}~ \x-\w={\bf 0},~\w\in \B. 
\end{equation} 
The corresponding augmented Lagrange function is
\begin{equation}
\label{pi-penalty-y}
L(\w,\x,\y;{\mu})=f(\x)+{\mu} {\varphi}(\w) +\langle \y, \x-\w\rangle+\frac{\sigma}{2}\|\x-\w\|^2, 
\end{equation} 
where $\y\in\R^n$ is the Lagrange multiplier and ${\sigma>0}$. We aim to solve 
problem \eqref{pi-penalty-w} using a variant of the inexact alternating direction method of multipliers (iADMM). Specifically, given current point $(\w^k,\x^k,\y^k;{\mu}_k)$, the next point is updated by
\begin{align}
\label{pi-penalty-update-w}
\w^{k+1}&\in{\rm argmin}_{\w\in \B} ~ {\mu}_k {\varphi}(\w) +\langle \y^k, \x^k-\w\rangle+\frac{\sigma}{2}\|\x^k-\w\|^2, \\
\label{pi-penalty-update-x}
\x^{k+1}&={\rm argmin}_{\x}   ~\langle \nabla f(\w^{k+1})+\y^k, \x -\w^{k+1}\rangle + \frac{1}{2}\| \x -\w^{k+1} \|^2_{\Q^{k+1}}  +\frac{\sigma}{2}\|\x -\w^{k+1}\|^2, \\
\label{pi-penalty-update-y}
\y^{k+1} &= \y^{k} + \sigma (\x^{k+1}-\w^{k+1}),
\end{align}
where $\Q^{k+1}\succeq{\bf 0}$ is known as a pre-conditioning matrix updated adaptively during iterations or predefined in advance.  Subproblem \eqref{pi-penalty-update-w} is equivalent to
\begin{align}
\label{pi-penalty-update-w-solution}
\w^{k+1}
&\in{\rm Prox}_{({\mu}_k/{\sigma})\varphi}^\B(\x^k+\y^k/\sigma).
\end{align}
Given particular $\varphi(\x)=\sum_{i=1}^n g(x_i)$, such as $g$  defined by \eqref{def-g-h} and \eqref{def-h-h}, the above problem admits a closed-form solution due to Lemma \ref{lemma-prox-gh}. While for subproblem \eqref{pi-penalty-update-x}, we approximate  $f(\x)$ at $\w^{k+1}$ instead of $\x^k$ using a Taylor-like expansion. This design encourages $\x^{k+1}$ to  approach $\w^{k+1}$ quickly, potentially leading to faster convergence.   {For subproblem \eqref{pi-penalty-update-x}, its objective function is strongly convex since $\Q^{k+1}\succeq 0$ and $\sigma>0$. Hence, it  has a unique minimizer, which satisfies the following  first-order optimality condition, 
$$
	\nabla f(\w^{k+1})+\y^k+\Q^{k+1}(\x-\w^{k+1})+\sigma(\x-\w^{k+1})=0.
$$
The above condition immediately gives rise to the unique solution to \eqref{pi-penalty-update-x} as
\begin{align}
	\label{pi-penalty-update-x-solution}
	\x^{k+1} =  \w^{k+1} -  (\sigma\I+\Q^{k+1}) ^{-1}  ( \nabla f(\w^{k+1})+\y^k ), 
\end{align}
where $\I$ is an identity matrix.} To ensure binary  solutions eventually, we update $\mu_k$ as follows, 
\begin{equation}\label{update-mu}
\mu_{k}=\mu_{k-1}+\begin{cases}
\min\left\{(\eta-1)\mu_{k-1},~\dfrac{\rho\sigma\|\x^{k}-\w^{k}\|^2}{\varphi(\w^{k})+\epsilon}\right\},& \text{if~~mod}(k,k_0)=0,~\varphi(\w^{k})\neq 0,\\
0, &\text{otherwise},
\end{cases}
\end{equation}
where ${\eta>1}, {\rho\in(0,1/6]}$, ${\epsilon>0}$, ${k_0>0}$ is an integer, and mod$(a,b)$ returns  the remainder after division of $a$ by $b$.  We note that $\w^{k+1}$ is binary if ${\varphi(\w^{k+1})=0}$. In such a scenario, it is unnecessary to increase $\mu_k$ further for better solutions. The algorithm stops when $(\w^k, \x^k, \y^k)$ satisfies 
\begin{eqnarray}\label{stop}\w^{k}\in\{0,1\}^n,~~ {\rm tol}_k:=\max\left\{\dfrac{\|\x^k-\w^k\|}{1+\|\w^k\|},~ \dfrac{\|\y^k + \nabla f(\w^k)\|}{1+\|\w^k\|}\right\}  <\varepsilon,\end{eqnarray} 
where ${\varepsilon\in(0,1)}$ is a given tolerance. From \eqref{pi-penalty-update-w-solution}, if $(\w^k, \x^k, \y^k)$ satisfies the above conditions, then $\w^k$ is approximately a binary P-stationary point. The proposed algorithm is presented in Algorithm \ref{BN-algo}. We term the algorithm, ShaPeak, which is derived from the sharp-peak function-based ADMM.  

\begin{algorithm}[!t]
    \SetAlgoLined

 \textbf{Input} $\w^0=\x^0\in\{0,1\}^n, \y^0=-\nabla f(\w^0)$, and  $({\mu}_0, \sigma, \varepsilon)>0$.
 
	\For{$k=0,1,2,\ldots$}{
	
  Update $(\w^{k+1},\x^{k+1},\y^{k+1})$ by \eqref{pi-penalty-update-w-solution}, \eqref{pi-penalty-update-x-solution}, and \eqref{pi-penalty-update-y}.
	 
	 If \eqref{stop} holds, then stop.
	   
	 Update $\mu_{k+1}$ by \eqref{update-mu}.
}
 
	\caption{(ShaPeak) Sharp-peak function-based iADMM.}\label{BN-algo}
\end{algorithm}
 
\begin{remark}
	{Update rule (\ref{update-mu}) is motivated by both practical and theoretical considerations. 
		On the one hand, the exact penalty theory requires the penalty parameter to be sufficiently large. 
		Although this threshold does not depend on the optimal solution set, it is still generally difficult 
		to estimate tightly in practice. To overcome this difficulty, a natural strategy is to increase $\mu_k$ adaptively, leading to the proposed update rule, (\ref{update-mu}), where $(\eta-1)\mu_{k-1}$ controls the growth of $\mu_k$, whereas  
		$\rho\sigma\|\x^{k}-\w^{k}\|^2/(\varphi(\w^{k})+\epsilon)$ is designed based on quantities arising in the convergence analysis. In this way, the rule keeps 
		$\{\mu_k\}$ nondecreasing while avoiding overly aggressive updates.}
\end{remark}

\subsection{Convergence analysis}
Hereafter, let $\{(\w^{k},\x^{k},\y^{k};{\mu}_{k})\}$ be the sequence generated by Algorithm \ref{BN-algo} with $\Q^k$ chosen as
\begin{align}\label{choice-Q}
\Q^k\succeq{\bf 0},~~\|\Q^k\|\leqslant \lambda ,
\end{align} 
where $\lambda >0$ is a predefined constant. For example, we can choose $\Q^k\equiv \lambda \I$ for problem \eqref{ex-qbip}  and $\lambda $ can be set as $\lambda =\|\Q\|$. Moreover, we define a useful region by
\begin{align}\label{def-omega}
\Omega:=\left\{(\w,\x)\in\B\times\R^n: f(\w)+\frac{\sigma}{2}\|\w-\x\|^2 \leqslant f(\w^0)\right\}.
\end{align}
Since $f$ is continuous, it follows $f_B:=\min_{\w\in\B}f(\w)>-\infty$, which indicates that $\Omega$ is a bounded set, thereby resulting in \begin{align}\label{def-delta}
 \alpha := \frac{2f(\w^0)-2f_B}{\sigma} \geqslant  \|\w-\x\|^2,~~\forall~(\w,\x)\in\Omega.
\end{align}
To establish the convergence, we need the following assumption.
\begin{assumption}\label{ass-Lip} The gradient of $f$ is locally Lipschitz continuous on $\N$, where
\begin{align}\label{def-N}
\N:=\left\{\x\in\R^n:  \|\x\|_{\infty}\leqslant 1+\frac{\sqrt{n}+ \sqrt{\alpha}}{8}\right\}.
\end{align}  
\end{assumption}
One can observe that $\N$ is a larger box than $\B$, namely $\N\supset\B$. According to Algorithm \ref{BN-algo}, the update of $\x^k$ does not involve any box constraint, making it difficult to guarantee  $\x^k \in \B$. However, Lemma \ref{Lemma-decrease-L} 1) shows that $\x^k$ always lies within this larger region $\N$. 
As we mentioned in Section \ref{Section-pre}, the local Lipschitz continuity on a compact set implies the Lipschitz continuity on this compact set. Therefore, Assumption \ref{ass-Lip} indicates that the gradient of $f$ is Lipschitz continuous on $\N$ with a constant $\beta\in(0,\infty)$, which further implies
\begin{align}\label{Lip-Strong-smooth}
f(\w)\leqslant  f(\x) + \langle \nabla f(\z), \w-\x \rangle + \frac{\beta}{2}\|\w-\x\|^2,~~\forall~\w,\x\in \N,~\forall~\z\in\{\w,\x\}. 
\end{align} 
Based on two constants $\beta$ and $\lambda $, we choose $\sigma$ to satisfy
\begin{align}\label{choice-sigma}
 \sigma \geqslant 8\max\{\lambda, \beta\}.
\end{align} 
The first result shows that, under Assumption \ref{ass-Lip} and appropriate parameter choices, the objective function associated with $L(\w^{k},\x^{k},\y^{k};{\mu}_{k})$ is bounded below.
\begin{lemma}\label{Lemma-lower-bd-L}
 Suppose Assumption \ref{ass-Lip} holds. Let $\{(\w^{k},\x^{k},\y^{k};{\mu}_{k})\}$ be the sequence generated by Algorithm \ref{BN-algo} with $\Q^k$ and $\sigma$ chosen as (\ref{choice-Q}) and (\ref{choice-sigma}). If $\x^{k}\in \N$, then  
 \begin{equation}
 \label{lower-bd-L}
\begin{aligned}
\widetilde{L}^k:=   &~L(\w^{k},\x^{k},\y^{k};{\mu}_{k}) + \frac{3\sigma}{8}\|  \x^{k} -\w^{k} \|^2 \\
\geqslant  & ~f(\w^{k}) +  \frac{\sigma}{2} \|\x^{k}-\w^{k}\|^2 >-\infty.
\end{aligned}
 \end{equation}
\end{lemma}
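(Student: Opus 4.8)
The plan is to expand the augmented Lagrangian in $\widetilde L^k$ and peel off the genuinely nonnegative pieces, reducing the claim to a clean inequality involving only $f$, $\nabla f$, and the multiplier $\y^k$. Writing out $L(\w^{k},\x^{k},\y^{k};\mu_{k})=f(\x^{k})+\mu_k\varphi(\w^{k})+\langle\y^{k},\x^{k}-\w^{k}\rangle+\tfrac{\sigma}{2}\|\x^{k}-\w^{k}\|^2$, I note that $\mu_k\geqslant 0$ and $\varphi(\w^{k})=\sum_i g(w^k_i)\geqslant 0$ (by $\w^k\in\B$ and c1) of Definition \ref{def-SPF}), so the penalty term may be dropped from below; the two copies of $\tfrac{\sigma}{2}\|\x^{k}-\w^{k}\|^2$ on the two sides of \eqref{lower-bd-L} cancel. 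Thus \eqref{lower-bd-L} reduces to
\begin{equation*}
f(\x^{k})+\langle \y^{k},\x^{k}-\w^{k}\rangle+\tfrac{3\sigma}{8}\|\x^{k}-\w^{k}\|^2\geqslant f(\w^{k}),
\end{equation*}
and the final bound $>-\infty$ is then immediate, because $\w^k\in\B$ forces $f(\w^k)\geqslant f_B>-\infty$ while $\tfrac{\sigma}{2}\|\x^{k}-\w^{k}\|^2\geqslant 0$.

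The key step, which I expect to carry the whole argument, is an explicit closed form for the multiplier. Rewriting the $\x$-update \eqref{pi-penalty-update-x-solution} as $(\sigma\I+\Q^{k})(\x^{k}-\w^{k})=-(\nabla f(\w^{k})+\y^{k-1})$ and the multiplier update \eqref{pi-penalty-update-y} as $\sigma(\x^{k}-\w^{k})=\y^{k}-\y^{k-1}$, I substitute the latter into the former; the $\y^{k-1}$ terms cancel and yield
\begin{equation*}
\y^{k}=-\nabla f(\w^{k})-\Q^{k}(\x^{k}-\w^{k}),\qquad k\geqslant 1.
\end{equation*}
Since the initialization sets $\y^0=-\nabla f(\w^0)$ with $\x^0=\w^0$, this identity also holds at $k=0$, hence for every $k$. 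This is the mechanism that pins the multiplier to the gradient of $f$ at $\w^k$, up to the pre-conditioning term.

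It then remains to substitute and collect. From \eqref{choice-Q} one has $0\leqslant\langle \Q^{k}(\x^{k}-\w^{k}),\x^{k}-\w^{k}\rangle\leqslant\lambda\|\x^{k}-\w^{k}\|^2$, so the multiplier identity gives $\langle \y^{k},\x^{k}-\w^{k}\rangle\geqslant -\langle\nabla f(\w^{k}),\x^{k}-\w^{k}\rangle-\lambda\|\x^{k}-\w^{k}\|^2$. Because $\x^k\in\N$ by hypothesis and $\w^k\in\B\subset\N$, I apply the descent inequality \eqref{Lip-Strong-smooth} with $(\w,\x,\z)=(\w^{k},\x^{k},\w^{k})$ to get $f(\x^{k})\geqslant f(\w^{k})+\langle\nabla f(\w^{k}),\x^{k}-\w^{k}\rangle-\tfrac{\beta}{2}\|\x^{k}-\w^{k}\|^2$. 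Adding these two bounds, the two occurrences of $\langle\nabla f(\w^{k}),\x^{k}-\w^{k}\rangle$ cancel, leaving
\begin{equation*}
f(\x^{k})+\langle \y^{k},\x^{k}-\w^{k}\rangle\geqslant f(\w^{k})-\Big(\tfrac{\beta}{2}+\lambda\Big)\|\x^{k}-\w^{k}\|^2.
\end{equation*}

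Finally I verify that the coefficient built into $\widetilde L^k$ dominates this remainder: the choice \eqref{choice-sigma}, $\sigma\geqslant 8\max\{\lambda,\beta\}$, gives $\tfrac{3\sigma}{8}\geqslant 3\max\{\lambda,\beta\}\geqslant \lambda+\beta\geqslant \tfrac{\beta}{2}+\lambda$, so adding $\tfrac{3\sigma}{8}\|\x^{k}-\w^{k}\|^2$ to both sides absorbs the negative quadratic term and establishes the reduced inequality. The main obstacle is conceptual rather than computational: spotting the multiplier identity and then checking that the specific constants $\tfrac{3\sigma}{8}$ and the threshold $\sigma\geqslant 8\max\{\lambda,\beta\}$ are calibrated precisely so that $\tfrac{3\sigma}{8}-\tfrac{\beta}{2}-\lambda\geqslant 0$; the box hypothesis $\x^k\in\N$ is exactly what licenses the use of \eqref{Lip-Strong-smooth} at the iterate.
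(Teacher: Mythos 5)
Your proposal is correct and follows essentially the same route as the paper's proof: drop $\mu_k\varphi(\w^k)\geqslant 0$, invoke the multiplier identity $\y^k=-\nabla f(\w^k)-\Q^k(\x^k-\w^k)$ (the paper's \eqref{opt-x-2}, valid at $k=0$ by the initialization), apply the descent inequality \eqref{Lip-Strong-smooth} on $\N$, and absorb the remainder using \eqref{choice-Q} and \eqref{choice-sigma}. The only cosmetic difference is that you bound $-\langle\Q^k(\x^k-\w^k),\x^k-\w^k\rangle$ directly by $-\lambda\|\x^k-\w^k\|^2$ rather than via the Young-type splitting the paper uses, which yields the same conclusion with cleaner constants.
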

\begin{proof} For any $\w, \x\in\R^n$ and any $t>0$,
\begin{equation}
\label{tri-ineq}
\begin{aligned}
\langle\w, \x\rangle &\leqslant ({1}/({2t}))\|\w\|^2+ ({t}/{2})\|\x\|^2,\\[1.0ex]
\|\w+ \x\|^2 &\leqslant  (1+1/t)\|\w\|^2+ (1+t)\|\x\|^2.
\end{aligned}\end{equation}
Moreover, for any $k \geqslant 0$ and $\w\in\R^n$,
\begin{align}\label{upbd-Q}
\| \Q^{k}\w\|\leqslant\lambda  \| \w\|\leqslant \frac{\sigma}{8}\| \w\|.  
\end{align}
The optimality conditions of \eqref{pi-penalty-update-w} and \eqref{pi-penalty-update-x} are 
\begin{align}
{\bf 0}&\in {\mu}_k \v^{k+1} - \y^k - \sigma(\x^k-\w^{k+1}) + N_{\B}(\w^{k+1}) \nonumber\\
\label{opt-w-2}
&\in {\mu}_k \v^{k+1} - \y^{k+1} + \sigma(\x^{k+1} -\x^k)+ N_{\B}(\w^{k+1}) ,\\
\label{opt-x-1}
{\bf 0}&=   \nabla f(\w^{k+1})+\y^k + (\sigma\I+\Q^{k+1})  ( \x^{k+1} -\w^{k+1})\\
\label{opt-x-2}
&= \nabla f(\w^{k+1})+\y^{k+1} +  \Q^{k+1} ( \x^{k+1} -\w^{k+1}),
\end{align}
where $\v^{k+1}\in\varphi(\w^{k+1})$ and  \eqref{opt-w-2} and \eqref{opt-x-2} result from \eqref{pi-penalty-update-y}. By Algorithm \ref{BN-algo}, $\w^k\in\B$ and thus $\varphi(\w^k) \geqslant 0$, thereby deriving that
\begin{align*}
\widetilde{L}^k & =  f(\x^{k}) + \mu_k\varphi(\w^k)+\langle\y^{k},\x^{k}-\w^{k}\rangle+ \frac{7\sigma}{8} \|\x^{k}-\w^{k}\|^2\\
 &\geqslant f(\w^{k})  +\langle \nabla f(\w^{k}) + \y^{k},\x^{k}-\w^{k}\rangle+\frac{7\sigma-4\beta}{8} \|\x^{k}-\w^{k}\|^2\\
  &\geqslant f(\w^{k}) - \langle \Q^k(\x^{k}-\w^{k}),\x^{k}-\w^{k}\rangle+\frac{13\sigma}{16} \|\x^{k}-\w^{k}\|^2\\
  &\geqslant f(\w^{k}) - \frac{1}{\sigma}\| \Q^k(\x^{k}-\w^{k})\|^2 +\frac{9\sigma}{16} \|\x^{k}-\w^{k}\|^2\\
        &\geqslant f(\w^{k})  +\frac{\sigma}{2} \|\x^{k}-\w^{k}\|^2\\
        &\geqslant f_B >- \infty,
\end{align*}
where the first inequality is from $\x^{k}\in \N$ and \eqref{Lip-Strong-smooth} by Assumption \ref{ass-Lip}, the second inequality is due to \eqref{opt-x-2} and \eqref{choice-sigma}, the third and fourth  inequalities hold by \eqref{tri-ineq} and \eqref{upbd-Q}, and the fifth inequality is because of $\w^k\in\B$ and $f_B=\min_{\w\in\B}f(\w)$.
\end{proof}
Based on the above lemma, we next prove that sequence $\{(\w^{k},\x^{k})\}$ is bounded in $\Omega \cap(\B\times \N)$ and sequence $\{\widetilde{L}^k\}$ is strictly increasing under the same conditions.
\begin{lemma}\label{Lemma-decrease-L}
 Suppose Assumption \ref{ass-Lip} holds. Let $\{(\w^{k},\x^{k},\y^{k};{\mu}_{k})\}$ be the sequence generated by Algorithm \ref{BN-algo} with $\Q^k$ and $\sigma$ chosen as (\ref{choice-Q}) and (\ref{choice-sigma}). The following results hold.
 \begin{itemize}[leftmargin=15pt]
 \item[1)] For any $k \geqslant 0$, $(\w^k, \x^{k})\in \Omega \cap(\B\times \N)$; 
 \item[2)] For any $k \geqslant 0$,
 \begin{align}\label{decrease-L}
\widetilde{L}^{k+1} -   \widetilde{L}^k
\leqslant   -  \frac{\sigma}{8} \|\x^{k+1}-\x^{k}\|^2. 
\end{align}
 \end{itemize}
\end{lemma}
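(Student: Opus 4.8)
The plan is to prove both items simultaneously by induction on $k$, since item 1) supplies the membership $\x^k\in\N$ needed to invoke Lemma \ref{Lemma-lower-bd-L} and the strong smoothness \eqref{Lip-Strong-smooth} in the descent estimate, while item 2) supplies the monotonicity that keeps the iterates inside $\Omega$. For the base case $k=0$, the initialization $\w^0=\x^0\in\{0,1\}^n$ gives $\varphi(\w^0)=0$ and $\x^0-\w^0={\bf 0}$, so $(\w^0,\x^0)\in\B\times\N$, $(\w^0,\x^0)\in\Omega$, and $\widetilde{L}^0=f(\w^0)$. In the inductive step I assume $(\w^k,\x^k)\in\Omega\cap(\B\times\N)$ and first establish $\x^{k+1}\in\N$ \emph{directly} from the update, then prove the descent \eqref{decrease-L}, and finally combine the descent with Lemma \ref{Lemma-lower-bd-L} to return $(\w^{k+1},\x^{k+1})$ to $\Omega$.

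For item 1), I would use the closed form \eqref{pi-penalty-update-x-solution} together with the identity $\y^k=-\nabla f(\w^k)-\Q^k(\x^k-\w^k)$ obtained from \eqref{opt-x-2} at the previous step (which also holds at $k=0$ because $\x^0=\w^0$). Since $(\sigma\I+\Q^{k+1})^{-1}$ has norm at most $1/\sigma$, this yields $\|\x^{k+1}-\w^{k+1}\|\leqslant\tfrac1\sigma\|\nabla f(\w^{k+1})-\nabla f(\w^k)\|+\tfrac1\sigma\|\Q^k(\x^k-\w^k)\|$, and then the $\beta$-Lipschitz continuity of $\nabla f$ on $\N$ (Assumption \ref{ass-Lip}), \eqref{upbd-Q}, and $\sigma\geqslant8\max\{\lambda,\beta\}$ give $\|\x^{k+1}-\w^{k+1}\|\leqslant\tfrac18\|\w^{k+1}-\w^k\|+\tfrac18\|\x^k-\w^k\|$. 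Bounding $\|\w^{k+1}-\w^k\|\leqslant\sqrt{n}$ (the Euclidean diameter of $\B$) and $\|\x^k-\w^k\|\leqslant\sqrt{\alpha}$ (from $(\w^k,\x^k)\in\Omega$ and \eqref{def-delta}) produces $\|\x^{k+1}-\w^{k+1}\|\leqslant(\sqrt n+\sqrt\alpha)/8$, whence $\|\x^{k+1}\|_\infty\leqslant\|\w^{k+1}\|_\infty+\|\x^{k+1}-\w^{k+1}\|\leqslant1+(\sqrt n+\sqrt\alpha)/8$, which is exactly $\x^{k+1}\in\N$ by \eqref{def-N}. Once the descent \eqref{decrease-L} is in hand, I chain $f(\w^{k+1})+\tfrac\sigma2\|\x^{k+1}-\w^{k+1}\|^2\leqslant\widetilde{L}^{k+1}\leqslant\widetilde{L}^{k}\leqslant\cdots\leqslant\widetilde{L}^0=f(\w^0)$ via Lemma \ref{Lemma-lower-bd-L} (applicable since $\x^{k+1}\in\N$), which is precisely $(\w^{k+1},\x^{k+1})\in\Omega$.

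For the descent \eqref{decrease-L}, I would telescope $\widetilde{L}^{k+1}-\widetilde{L}^k$ through four single-variable changes plus the proximal correction: the $\mu$-change $(\mu_{k+1}-\mu_k)\varphi(\w^{k+1})$, the dual change $\langle\y^{k+1}-\y^k,\x^{k+1}-\w^{k+1}\rangle=\sigma\|\x^{k+1}-\w^{k+1}\|^2$ by \eqref{pi-penalty-update-y}, the $\x$-change, and the $\w$-change, and add $\tfrac{3\sigma}8(\|\x^{k+1}-\w^{k+1}\|^2-\|\x^k-\w^k\|^2)$. The $\mu$-change is nonnegative but controlled by the adaptive rule \eqref{update-mu}: when active, $\mu_{k+1}-\mu_k\leqslant\rho\sigma\|\x^{k+1}-\w^{k+1}\|^2/(\varphi(\w^{k+1})+\epsilon)$, so $(\mu_{k+1}-\mu_k)\varphi(\w^{k+1})\leqslant\rho\sigma\|\x^{k+1}-\w^{k+1}\|^2\leqslant\tfrac\sigma6\|\x^{k+1}-\w^{k+1}\|^2$ since $\rho\leqslant1/6$ (and it vanishes otherwise). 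The $\w$-change is $\leqslant0$ by the minimality of $\w^{k+1}$ in \eqref{pi-penalty-update-w}. The $\x$-change generates the descent: because $\x^{k+1}$ minimizes the $\sigma$-strongly convex quadratic surrogate in \eqref{pi-penalty-update-x}, its quadratic part contributes $-\tfrac\sigma2\|\x^{k+1}-\x^k\|^2$, while the gap between the surrogate and the true $f$ is handled by \eqref{Lip-Strong-smooth}, leaving cross-terms measured by $\|\x^k-\w^{k+1}\|$ and $\|\x^{k+1}-\w^{k+1}\|$ that I would bound with Young's inequality \eqref{tri-ineq}.

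The crux, and the step I expect to be hardest, is to dominate the dual ascent term $\sigma\|\x^{k+1}-\w^{k+1}\|^2$ together with the linearization cross-terms, none of which is a priori small. The key device is to reduce $\|\x^{k+1}-\w^{k+1}\|$ to $\x$-increments and the previous feasibility gap: inserting the triangle bound $\|\w^{k+1}-\w^k\|\leqslant\|\x^{k+1}-\w^{k+1}\|+\|\x^{k+1}-\x^k\|+\|\x^k-\w^k\|$ into the estimate $\|\x^{k+1}-\w^{k+1}\|\leqslant\tfrac18\|\w^{k+1}-\w^k\|+\tfrac18\|\x^k-\w^k\|$ from item 1) and rearranging yields a bound of the form $\|\x^{k+1}-\w^{k+1}\|\leqslant c_1\|\x^{k+1}-\x^k\|+c_2\|\x^k-\w^k\|$ with small constants. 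Squaring, every remaining term in $\widetilde{L}^{k+1}-\widetilde{L}^k$ becomes a combination of $\|\x^{k+1}-\x^k\|^2$ and $\|\x^k-\w^k\|^2$; the $\|\x^k-\w^k\|^2$ contributions are absorbed by the $-\tfrac{3\sigma}8\|\x^k-\w^k\|^2$ from the proximal correction (and by the nonpositive $\w$-change), while the $\|\x^{k+1}-\x^k\|^2$ contributions are dominated by the $-\tfrac\sigma2\|\x^{k+1}-\x^k\|^2$ from the surrogate, leaving the claimed $-\tfrac\sigma8\|\x^{k+1}-\x^k\|^2$. Making this bookkeeping close is exactly what forces the calibrated choices $\rho\leqslant1/6$, the correction weight $3\sigma/8$, and $\sigma\geqslant8\max\{\lambda,\beta\}$; verifying that these constants leave a margin of precisely $\sigma/8$ is the routine but delicate part of the computation.
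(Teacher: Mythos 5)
Your proposal is correct and follows essentially the same route as the paper: a joint induction in which $\x^{k+1}\in\N$ is extracted directly from the update formula \eqref{pi-penalty-update-x-solution} combined with \eqref{opt-x-2}, the Lagrangian gap is split into the same four pieces $G_{\mu}^{k+1},G_{\y}^{k+1},G_{\x}^{k+1},G_{\w}^{k+1}$ with the same bounds, and membership in $\Omega$ is recovered from the monotonicity of $\widetilde{L}^k$ via Lemma \ref{Lemma-lower-bd-L}. The only cosmetic difference is that you absorb the coupling term by self-bounding $\|\x^{k+1}-\w^{k+1}\|$ in terms of $\|\x^{k+1}-\x^k\|$ and $\|\x^k-\w^k\|$, whereas the paper performs the identical manipulation on $\|\y^{k+1}-\y^k\|^2=\sigma^2\|\x^{k+1}-\w^{k+1}\|^2$ in \eqref{gap-y1-y}--\eqref{gap-yk1-yk}; the resulting constants close with margin in both versions.
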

\begin{proof} We prove 1) and 2) together. By \eqref{update-mu},  if mod$(k+1,k_0)=0$ and $\varphi(\w^{k+1})\neq 0$, then
 \begin{align}\label{update-mu-fact}
(\mu_{k+1}-\mu_k) \varphi(\w^{k+1}) \leqslant  {\rho\sigma} \|\x^{k+1}-\w^{k+1}\|^2  \leqslant    \frac{1}{6\sigma} \|\y^{k+1}-\y^{k}\|^2.
\end{align}
where the last inequality is due to $\rho\in(0,1/6]$ and \eqref{pi-penalty-update-y}. 
Otherwise $\mu_{k+1}=\mu_k$. Therefore, both cases yield the above fact. 
Moreover, as $\w^k\in\B\subset\N$ for any $k \geqslant 0$, Assumption \ref{ass-Lip} indicates  
\begin{align}
\label{Lip-wk1-wk}
\|   \nabla f(\w^{k+1})-\nabla f(\w^{k}) \| \leqslant \beta\|\w^{k+1} - \w^{k}\|\leqslant \frac{\sigma}{8}\|\w^{k+1} - \w^{k}\|,
\end{align}
where the last inequality is by \eqref{choice-sigma}.
By denoting
\begin{align}
\phi^{k+1}:=  \|  \x^{k} -\w^{k} \|^2 -   \| \x^{k+1} -\w^{k+1} \|^2,  
\end{align}
it follows from \eqref{opt-x-2} that 
\begin{align} 
\| \y^{k+1} - \y^{k} \|^2 &= \| \nabla f(\w^{k+1})-\nabla f(\w^{k}) +  \Q^{k+1} ( \x^{k+1} -\w^{k+1}) - \Q^{k} ( \x^{k} -\w^{k})\|^2\nonumber\\[1ex]
  &\leqslant \frac{4}{3} \| \nabla f(\w^{k+1})-\nabla f(\w^{k}) \|^2 +  8 \|\Q^{k+1} ( \x^{k+1} -\w^{k+1})\|^2 +8\| \Q^{k} ( \x^{k} -\w^{k})\|^2\nonumber\\
&\leqslant  \frac{ \sigma^2}{48} \|  \w^{k+1} -\w^{k} \|^2 +    \frac{    \sigma^2}{8}  \| \x^{k+1} -\w^{k+1} \|^2 +    \frac{    \sigma^2}{8}\|  \x^{k} -\w^{k} \|^2\nonumber\\
\label{gap-y1-y}
&\leqslant  \frac{ \sigma^2}{16} \|  \x^{k+1} -\x^{k} \|^2 +  \frac{3 \sigma^2}{16}    \| \x^{k+1} -\w^{k+1} \|^2 +  \frac{3 \sigma^2}{16}   \|  \x^{k} -\w^{k} \|^2 \\
&=  \frac{ \sigma^2}{16} \|  \x^{k+1} -\x^{k} \|^2 +  \frac{3 \sigma^2}{16} \phi^{k+1} +   \frac{3 \sigma^2}{8}   \| \x^{k+1} -\w^{k+1} \|^2  \nonumber\\
&=  \frac{ \sigma^2}{16} \|  \x^{k+1} -\x^{k} \|^2 +  \frac{3 \sigma^2}{16} \phi^{k+1} +   \frac{3}{8}   \| \y^{k+1} -\y^{k} \|^2,\nonumber 
\end{align}
where the first inequality is from \eqref{tri-ineq}, the second  inequality is from \eqref{Lip-wk1-wk} and \eqref{upbd-Q}, the third  inequality is from \eqref{choice-sigma},  and the last inequality is from $\|\w^{k+1} -\w^{k}\|^2 \leqslant 3\|\w^{k+1} -\x^{k+1}\|^2 + 3\|\x^{k+1} - \x^{k}\|^2 + 3\|\x^{k} -\w^{k}\|^2$. The above condition results in
\begin{align}\label{gap-yk1-yk}
\| \y^{k+1} - \y^{k} \|^2 
\leqslant \frac{\sigma^2}{10} \|  \x^{k+1} -\x^{k} \|^2 + \frac{3\sigma^2}{10}\phi^{k+1}.
\end{align}
We now decompose a gap $G^{k+1}$ by
\begin{align*}
G^{k+1}&:=L(\w^{k+1},\x^{k+1},\y^{k+1};{\mu}_{k+1})-L(\w^{k},\x^{k},\y^{k};{\mu}_{k})=G_{\mu}^{k+1} +G_{\y}^{k+1}+ G_{\x}^{k+1}+ G_{\w}^{k+1}, 
\end{align*}
where $G_{\mu}^{k+1}, G_{\y}^{k+1}, G_{\x}^{k+1}$, and $G_{\w}^{k+1}$ are defined by
\begin{align*}
G_{\mu}^{k+1}&:=L(\w^{k+1},\x^{k+1},\y^{k+1};{\mu}_{k+1})-L(\w^{k+1},\x^{k+1},\y^{k+1};{\mu}_{k}),\\
G_{\y}^{k+1}&:=L(\w^{k+1},\x^{k+1},\y^{k+1};{\mu}_{k})-L(\w^{k+1},\x^{k+1},\y^{k};{\mu}_{k}),\\
G_{\x}^{k+1}&:=L(\w^{k+1},\x^{k+1},\y^{k};{\mu}_{k})-L(\w^{k+1},\x^{k},\y^{k};{\mu}_{k}),\\
G_{\w}^{k+1}&:=L(\w^{k+1},\x^{k},\y^{k};{\mu}_{k})-L(\w^{k},\x^{k},\y^{k};{\mu}_{k}).
\end{align*}
Using \eqref{update-mu-fact} and \eqref{pi-penalty-update-y}, one can check that
\begin{align} \label{Gmu}
G_{\mu}^{k+1}&=(\mu_{k+1}-\mu_k) \varphi(\w^{k+1})\leqslant    \frac{ 1}{6\sigma} \|\y^{k+1}-\y^{k}\|^2\\
\label{Gy}
G_{\y}^{k+1}&=\langle\y^{k+1}-\y^{k},  \x^{k+1}-\w^{k+1}\rangle= \frac{1}{\sigma} \|\y^{k+1}-\y^{k}\|^2. 
\end{align}
In the sequel, we aim to estimate $G_{\x}^{k+1}$ and $G_{\w}^{k+1}$ so as to estimate gap $G^{k+1}$ by induction.

\underline{For $k=0$,}  $(\w^0, \x^0)\in \Omega$ because $\w^0=\x^0\in\{0,1\}^n\subseteq\B$ in Algorithm \ref{BN-algo}.
Note that $\y^0=-\nabla f(\w^0)$ and  $\w^0=\x^0$, which means that \eqref{opt-x-2} holds for $k=0$. This together with \eqref{pi-penalty-update-x-solution} leads to
\begin{align}
\label{bd-x1}
\|\x^{1} \|_{\infty}& = \| \w^{1} -  (\sigma\I+\Q^{1}) ^{-1}  ( \nabla f(\w^{1})+\y^0 )\|_{\infty} \nonumber\\
&= \| \w^{1} -  (\sigma\I+\Q^{1}) ^{-1}  ( \nabla f(\w^{1})-\nabla f(\w^{0}) + \Q^{0}(\x^0-\w^0) )\|_{\infty} \nonumber\\
&\leqslant \| \w^{1}\|_{\infty} +\|  (\sigma\I+\Q^{1}) ^{-1}  ( \nabla f(\w^{1})-\nabla f(\w^{0}) + \Q^{0}(\x^0-\w^0) )\|  \nonumber\\
&\leqslant 1+\|  (\sigma\I+\Q^{1}) ^{-1} \|(\|   \nabla f(\w^{1})-\nabla f(\w^{0}) \|+  \|\Q^{0}(\x^0-\w^0)\|)\nonumber\\
&\leqslant  1+ (\beta/\sigma)\|   \w^{1} - \w^{0} \|+ (\lambda /\sigma)\|   \x^{0} - \w^{0} \|\nonumber\\
&\leqslant 1+ (1/8)\|   \w^{1} - \w^{0} \|+ (1/8)\|   \x^{0} - \w^{0} \|\nonumber\\
&\leqslant  1 + (\sqrt{n}+\sqrt{\alpha})/8,
\end{align}
where  the second equality is from \eqref{opt-x-2} with $k=0$, the first inequality is because of $\|\w\|_\infty \leqslant \|\w\|$, the third inequality is from \eqref{Lip-wk1-wk} and \eqref{upbd-Q}, the fourth inequality is from \eqref{choice-sigma}, and the last inequality is due to $\w^0, \w^1\in\B$ and \eqref{def-delta} (because of $(\w^0, \x^0)\in\Omega$). Therefore, $(\w^1, \x^1)\in\B\times\N$, which by $\x^{0}\in\N$, Assumption \ref{ass-Lip}, and \eqref{Lip-Strong-smooth} derives that
\begin{align*}
 f(\x^{1}) \leqslant f(\x^{0}) +  \langle \nabla f(\x^{1}) ,\x^{1}-\x^{0}\rangle + \frac{ \beta}{2}\|\x^{1}-\x^{0}\|^2.  
\end{align*}
This allows us to obtain the following chain of inequalities,
\begin{align}
G_{\x}^{1}&=f(\x^{1}) - f(\x^{0}) + \langle\y^{0},\x^{1}-\x^{0}\rangle+\frac{\sigma}{2}\left(\|\x^{1}-\w^{1}\|^2-\|\x^{0}-\w^{1}\|^2\right)\nonumber\\
&\leqslant - \frac{\sigma-\beta}{2}\|\x^{1}-\x^{0}\|^2  + \langle \nabla f(\x^{1})+ \y^{0} + \sigma (\x^{1}-\w^{1}),\x^{1}-\x^{0}\rangle \nonumber\\
&\leqslant - \frac{7\sigma}{16}\|\x^{1}-\x^{0}\|^2+ \langle \nabla f(\x^{1}) - \nabla f(\w^{1}) -\Q^{1} (\x^{1}-\w^{1}),\x^{1}-\x^{0}\rangle  \nonumber\\
&\leqslant - \frac{ \sigma}{4}\|\x^{1}-\x^{0}\|^2+ \frac{4}{3\sigma}\| \nabla f(\x^{1}) - \nabla f(\w^{1}) -\Q^{1} (\x^{1}-\w^{1})\|^2  \nonumber\\
&\leqslant - \frac{\sigma}{4}\|\x^{1}-\x^{0}\|^2 + \frac{ 8\beta^2 }{ 3\sigma} \| \x^{1} - \w^{1} \|^2 +\frac{ 8\lambda ^2}{ 3\sigma} \| \x^{1} - \w^{1} \|^2  \nonumber\\ 
&\leqslant  - \frac{ \sigma}{4}\|\x^{1}-\x^{0}\|^2 + \frac{ \sigma}{12} \| \x^{1} - \w^{1} \|^2  \nonumber\\
 \label{Gx}
 &\leqslant  - \frac{ \sigma}{4}\|\x^{1}-\x^{0}\|^2 + \frac{1}{12\sigma} \| \y^{1} - \y^{0} \|^2 ,
\end{align}
where the second inequality is from \eqref{choice-sigma} and  \eqref{opt-x-1}, the third inequality is from \eqref{tri-ineq}, the fourth inequality is due to $\w^1, \x^{1}\in\N$ and Assumption \ref{ass-Lip} and \eqref{upbd-Q}, the fifth inequality is by \eqref{choice-sigma}, and the last inequality is from \eqref{pi-penalty-update-y}. Finally, the optimality of $\w^1$ to problem \eqref{pi-penalty-update-w-solution} suffices to
\begin{align}\label{Gw}
G_{\w}^{1}=\mu_0 \varphi(\w^{1}) - \langle\y^{0},\w^{1}\rangle+\frac{\sigma}{2}\|\x^{0}-\w^{1}\|^2 - \left(\mu_0 \varphi(\w^{0})  - \langle\y^{0}, \w^{0}\rangle+ \frac{\sigma}{2} \|\x^{0}-\w^{0}\|^2\right)
\leqslant 0.
\end{align}
Combining (\ref{Gmu}), (\ref{Gy}), (\ref{Gx}), and (\ref{Gw}) derives that
\begin{align*}
G^{1}&\leqslant   - \frac{\sigma}{4}\|\x^{1}-\x^{0}\|^2  + \frac{5}{4\sigma} \| \y^{1} - \y^{0} \|^2   \\
&\leqslant  -  \frac{\sigma}{8} \|\x^{1}-\x^{0}\|^2  +\frac{3\sigma}{8} \phi^{1}  ,
\end{align*}
where the last inequality is from \eqref{gap-yk1-yk}. The above condition can show \eqref{decrease-L} with $k=0$, namely, 
\begin{align}\label{decrease-L-0} 
\widetilde{L}^1 - \widetilde{L}^0   \leqslant    -  \frac{\sigma}{8} \|\x^{1}-\x^{0}\|^2.  
\end{align}
\underline{For $k=1$,} we first prove $(\w^1,\x^1)\in\Omega$. Based on the initialization of Algorithm \ref{BN-algo}, there is
$$\widetilde{L}^0 = f(\w^0).$$
In addition, we have shown $\x^1\in\N$, which recalls \eqref{lower-bd-L} and \eqref{decrease-L-0} derives
\begin{align*} 
f(\w^{1}) +  \frac{\sigma}{2} \|\x^{1}-\w^{1}\|^2 \leqslant \widetilde{L}^1 \leqslant \widetilde{L}^0=f(\w^0). 
\end{align*}
As a result of the above condition and \eqref{def-omega}, we obtain $(\w^1,\x^1)\in\Omega$, thereby resulting in $$(\w^1, \x^1)\in \Omega \cap(\B\times\N).$$ Then the similar reasoning to prove $\x^1\in\N$ (see \eqref{bd-x1}) and  \eqref{decrease-L-0} enables showing $\x^2\in\N$  and  
\begin{align*}\label{decrease-L-1} 
\widetilde{L}^2 - \widetilde{L}^1   \leqslant      -  \frac{\sigma}{8} \|\x^{1}-\x^{2}\|^2. 
\end{align*}
\underline{For $k \geqslant  2$,} repeating the above proof enables the establishment of the results.
\end{proof}
Using the above lemma, the following theorem establishes the global convergence of Algorithm \ref{BN-algo}, showing that any accumulation point of $\{ \x^{k}\}$ is a P-stationary point of a new penalty model $\min_{\x\in \B}F(\x;{\mu_\infty})$, where $\mu_\infty$ is the limit of $\{\mu_k\}$. As stated in Theorem \ref{firstPST}, P-stationary points are closely related to local/global minimizers. To obtain a stronger convergence property (i.e., the whole sequence converges to a P-stationary point), an additional condition on final penalty parameter $\mu_\infty$ is imposed, that is, ${\mu_\infty>\overline{\mu}}$. This condition can be ensured if initializing $\mu_0$ as ${\mu_0>\overline{\mu}}$ due to ${\mu_\infty \geqslant \mu_0}$ from \eqref{update-mu}.
\begin{theorem}\label{theorem-global-con}
 Suppose Assumption \ref{ass-Lip} holds. Let $\{(\w^{k},\x^{k},\y^{k};{\mu}_{k})\}$ be the sequence generated by Algorithm \ref{BN-algo} with $\Q^k$ and $\sigma$ chosen as (\ref{choice-Q}) and (\ref{choice-sigma}). The following results hold.
 \begin{itemize}[leftmargin=15pt]
 \item[1)] $\{ L(\w^{k},\x^{k},\y^{k};{\mu}_{k})\}$ and $\{\widetilde{L}^{k}\}$ converge to the same value, $\{\mu_{k}\}$ converges (to $\mu_\infty$),  and
$$\lim_{k\to\infty}\|\w^{k+1}-\w^{k}\|=\lim_{k\to\infty}\|\x^{k+1}-\x^{k}\|=\lim_{k\to\infty}\|\y^{k+1}-\y^{k}\|=\lim_{k\to\infty}\|\w^{k}-\x^{k}\|=0.$$
 \item[2)]  {Let $\x^\infty$ be any accumulation point  of sequence $\{ \x^{k}\}$. Then there is $\tau=1/\sigma$ such that $\x^\infty$ is a P-stationary point of problem $ \min_{\x\in \B} F(\x;\mu_\infty)$.}
 \item[3)]  Sequence $\{ \x^{k}\}$ converges to a binary P-stationary point $\x^{\infty}$  if $\mu_\infty>\overline{\mu}$.
 \end{itemize}
\end{theorem}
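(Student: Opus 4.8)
The plan is to run everything off the two facts already secured: $\{\widetilde{L}^k\}$ is bounded below (Lemma~\ref{Lemma-lower-bd-L}) and decreases with $\widetilde{L}^{k+1}-\widetilde{L}^k\leqslant -\tfrac{\sigma}{8}\|\x^{k+1}-\x^k\|^2$ (Lemma~\ref{Lemma-decrease-L}), plus the discreteness of $\{0,1\}^n$. For statement~1), a monotone sequence bounded below converges, so $\widetilde{L}^k\to\widetilde{L}^\infty$; telescoping the decrease inequality gives $\tfrac{\sigma}{8}\sum_k\|\x^{k+1}-\x^k\|^2\leqslant \widetilde{L}^0-\widetilde{L}^\infty<\infty$, whence $\|\x^{k+1}-\x^k\|\to 0$ and $\sum_k\|\x^{k+1}-\x^k\|^2<\infty$. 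Writing $b_k:=\|\x^k-\w^k\|^2$, the update \eqref{pi-penalty-update-y} gives $\x^{k+1}-\w^{k+1}=\tfrac1\sigma(\y^{k+1}-\y^k)$, so $\|\y^{k+1}-\y^k\|^2=\sigma^2 b_{k+1}$ and $\phi^{k+1}=b_k-b_{k+1}$. Substituting these into the already-proved bound \eqref{gap-yk1-yk} yields the scalar recursion $b_{k+1}\leqslant \tfrac{3}{13}b_k+\tfrac{1}{13}\|\x^{k+1}-\x^k\|^2$. Since the contraction factor is below $1$, the forcing term is summable, and $b_0=0$ (because $\w^0=\x^0$), I get $b_k\to 0$ and $\sum_k b_k<\infty$; hence $\|\x^k-\w^k\|\to 0$, $\|\y^{k+1}-\y^k\|=\sigma\sqrt{b_{k+1}}\to 0$, and the triangle inequality $\|\w^{k+1}-\w^k\|\leqslant\|\w^{k+1}-\x^{k+1}\|+\|\x^{k+1}-\x^k\|+\|\x^k-\w^k\|$ forces $\|\w^{k+1}-\w^k\|\to 0$. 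Finally $\{\mu_k\}$ is nondecreasing and, by \eqref{update-mu}, each increment is at most $\tfrac{\rho\sigma}{\epsilon}b_k$ (using $\varphi(\w^k)\geqslant 0$), so $\sum_k(\mu_{k+1}-\mu_k)\leqslant\tfrac{\rho\sigma}{\epsilon}\sum_k b_k<\infty$ and $\mu_k\uparrow\mu_\infty$. As $\widetilde{L}^k-L(\w^k,\x^k,\y^k;\mu_k)=\tfrac{3\sigma}{8}b_k\to 0$, both sequences share the limit.

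For statement~2), I take a subsequence $\x^{k_j}\to\x^\infty$. By $\|\x^k-\w^k\|\to 0$ and $\|\x^{k+1}-\x^k\|\to 0$, all of $\w^{k_j},\x^{k_j+1},\w^{k_j+1}$ also converge to $\x^\infty$. Using \eqref{opt-x-2}, the facts $\x^{k+1}-\w^{k+1}\to 0$, $\|\Q^{k+1}\|\leqslant\lambda$, and continuity of $\nabla f$, I obtain $\y^{k_j}\to \y^\infty:=-\nabla f(\x^\infty)$. The subproblem \eqref{pi-penalty-update-w-solution} states that $\w^{k_j+1}$ minimizes $\tfrac12\|\w-(\x^{k_j}+\y^{k_j}/\sigma)\|^2+(\mu_{k_j}/\sigma)\varphi(\w)$ over $\w\in\B$. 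I then pass to the limit in the defining inequality of this argmin: the quadratic term is continuous, $\mu_{k_j}\to\mu_\infty$, and lower semicontinuity of $\varphi$ on the left-hand side gives $\liminf_j\varphi(\w^{k_j+1})\geqslant\varphi(\x^\infty)$. The resulting limiting inequality shows that $\x^\infty$ minimizes $\tfrac12\|\w-(\x^\infty-\nabla f(\x^\infty)/\sigma)\|^2+(\mu_\infty/\sigma)\varphi(\w)$ over $\B$, i.e.\ $\x^\infty\in{\rm Prox}^{\B}_{(\mu_\infty/\sigma)\varphi}\!\big(\x^\infty-\tfrac1\sigma\nabla f(\x^\infty)\big)$, which is exactly \eqref{Pstationary} with $\tau=1/\sigma$ for $\min_{\x\in\B}F(\x;\mu_\infty)$.

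For statement~3), the hypothesis $\mu_\infty>\overline{\mu}$ promotes every accumulation point to a binary point: by statement~2) it is a P-stationary point of $\min_{\x\in\B}F(\x;\mu_\infty)$, hence a KKT point by Theorem~\ref{firstPST}(1), hence binary by Theorem~\ref{firstKKT}(1). Thus the bounded sequence $\{\x^k\}$ (it stays in the compact box $\N$) has all accumulation points in the finite set $\{0,1\}^n$, whose distinct elements are separated by distance at least $1$. Combining this with the asymptotic regularity $\|\x^{k+1}-\x^k\|\to 0$ from statement~1) yields convergence of the whole sequence: once the step sizes fall below $1/3$, the iterates cannot pass from a neighborhood of one binary point to a neighborhood of a different one without producing an accumulation point strictly between the two, which would be non-binary and contradict the previous line. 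Hence $\{\x^k\}$ has a unique accumulation point $\x^\infty\in\{0,1\}^n$, it converges to $\x^\infty$, and $\x^\infty$ is P-stationary by statement~2).

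The hard part will be statement~3): asymptotic regularity by itself only forces the accumulation set to be connected, so the decisive leverage is that P-stationarity pins every accumulation point inside the discrete set $\{0,1\}^n$ once $\mu_\infty>\overline{\mu}$, and reconciling the ``continuous'' limiting behaviour of the iterates with this discreteness is the crux. A secondary technical hurdle is extracting the contraction recursion for $b_k=\|\x^k-\w^k\|^2$ from \eqref{gap-yk1-yk}, since it is this summability that simultaneously delivers $\|\y^{k+1}-\y^k\|\to 0$ and the boundedness, hence convergence, of the adaptively updated parameter $\{\mu_k\}$.
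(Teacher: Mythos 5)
Your proposal is correct and follows the same overall architecture as the paper's proof: monotone decrease and lower boundedness of $\widetilde{L}^k$ give summability of $\|\x^{k+1}-\x^k\|^2$, the multiplier update converts $\y$-differences into $\|\x^k-\w^k\|$, limit passage in the prox subproblem gives P-stationarity, and discreteness plus asymptotic regularity gives whole-sequence convergence. Where you diverge is in three localized steps, each time trading a citation for a self-contained computation. In part 1) the paper sums the inequality $\|\y^{k+1}-\y^k\|^2\leqslant \tfrac{\sigma^2}{16}\|\x^{k+1}-\x^k\|^2+\tfrac{3}{16}\|\y^{k+1}-\y^k\|^2+\tfrac{3}{16}\|\y^{k}-\y^{k-1}\|^2$ telescopically to get $\sum_k\|\y^{k+1}-\y^k\|^2<\infty$; your scalar recursion $b_{k+1}\leqslant\tfrac{3}{13}b_k+\tfrac{1}{13}\|\x^{k+1}-\x^k\|^2$ is an equivalent repackaging of \eqref{gap-yk1-yk} via $\|\y^{k+1}-\y^k\|^2=\sigma^2 b_{k+1}$ and delivers the same summability (your bound on the $\mu$-increments by $\tfrac{\rho\sigma}{\epsilon}b_k$ matches the paper's $\tfrac{1}{6\epsilon\sigma}\|\y^{k+1}-\y^k\|^2$ up to constants). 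In part 2) the paper invokes \cite[Theorem 1.25]{rock98} to pass to the limit in the proximal inclusion, whereas you pass to the limit directly in the argmin inequality using lower semicontinuity of $\varphi$ and $\mu_{k_j}\to\mu_\infty>0$; this is a clean, elementary substitute and is valid. In part 3) the paper cites \cite[Lemma 4.10]{more83}, while you reprove the underlying Ostrowski-type fact by the annulus argument (any point at distance in $[1/3,2/3]$ from a binary point is non-binary since distinct binary points are at distance at least $1$); this is exactly the content of the cited lemma. No gaps; the proof stands.
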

\begin{proof} 1) From \eqref{decrease-L}, sequence $\{\widetilde{L}^{k}\}$ is non-increasing.  Moreover, from \eqref{lower-bd-L}, for any $k \geqslant 0$,
$$\widetilde{L}^{k} \geqslant f(\w^{k}) + \frac{\sigma}{2} \|\x^{k}-\w^{k}\|^2 \geqslant  f_B. $$
Therefore, sequence $\{\widetilde{L}^{k}\}$ converges. Again by \eqref{decrease-L} and $\widetilde{L}^0 = f(\w^0)$, for any $\ell \geqslant k \geqslant 0$, 
\begin{equation}
\label{sum-x1-x} 
\begin{aligned} 
\sum_{k=0}^{\ell-1}\frac{\sigma}{8} \|\x^{k+1}-\x^{k}\|^2   \leqslant \sum_{k=0}^{\ell-1} (\widetilde{L}^{k} -   \widetilde{L}^{k+1}) = \widetilde{L}^{0} -   \widetilde{L}^{\ell}  \leqslant f(\w^0)-f_B.
\end{aligned}\end{equation}
 The above condition implies $\lim_{k\to\infty}\|\x^{k+1}-\x^{k}\|=0$. 
 It follows from \eqref{gap-y1-y} that
\begin{align*}
\| \y^{k+1} - \y^{k} \|^2 & \leqslant  \frac{ \sigma^2}{16} \|  \x^{k+1} -\x^{k} \|^2 +  \frac{3 \sigma^2}{16}  \| \x^{k+1} -\w^{k+1} \|^2 + \frac{3 \sigma^2}{16}  \|  \x^{k} -\w^{k} \|^2\\ 
& =  \frac{ \sigma^2}{16} \|  \x^{k+1} -\x^{k} \|^2 +  \frac{3   }{16} \| \y^{k+1} - \y^{k} \|^2 +  \frac{3   }{16}   \| \y^{k} - \y^{k-1} \|^2,
\end{align*}
where the equality is from \eqref{pi-penalty-update-y}, which results in
\begin{align*}
\sum_{k=0}^{\ell-1} 13 \| \y^{k+1} - \y^{k} \|^2   \leqslant  \sum_{k=0}^{\ell-1} \sigma^2  \|  \x^{k+1} -\x^{k} \|^2 +   \sum_{k=0}^{\ell-1} 3  \| \y^{k} - \y^{k-1} \|^2.
\end{align*}
This further gives rise to
\begin{align}\label{sum-y1-y}
\sum_{k=0}^{\ell-1} 10 \| \y^{k+1} - \y^{k} \|^2 &\leqslant 
13 \| \y^{\ell} - \y^{\ell-1} \|^2  + \sum_{k=0}^{\ell-1} 10 \| \y^{k} - \y^{k-1} \|^2\nonumber\\
&=\sum_{k=0}^{\ell-1} 13 \| \y^{k+1} - \y^{k} \|^2 -\sum_{k=0}^{\ell-1} 3 \| \y^{k} - \y^{k-1} \|^2\nonumber\\
&   \leqslant  \sum_{k=0}^{\ell-1} \sigma^2  \|  \x^{k+1} -\x^{k} \|^2\nonumber\\
& \leqslant  8\sigma(f(\w^0)-f_B),  
\end{align}
where the last inequality is due to \eqref{sum-x1-x}. In the above inequalities, we let  $\y^{-1}=\y^{0}$. The above condition implies $\|\y^{k+1}-\y^{k}\|\to 0$ as $k\to\infty$, which by \eqref{pi-penalty-update-y} derives $ \|\x^{k+1}-\w^{k+1}\|\to 0$. This and \eqref{lower-bd-L} yield $ \widetilde{L}^{k}- L(\w^{k},\x^{k},\y^{k};{\mu}_{k}) \to 0$. Finally,    $$ \|\w^{k+1}-\w^{k}\|=\|\w^{k+1}-\x^{k+1}+\x^{k+1}-\x^{k}+\x^{k}-\w^{k}\|\to 0.$$
{It follows from} \eqref{update-mu} that if mod$(k+1,k_0)=0$ and $\varphi(\w^{k+1})\neq 0$, then
 \begin{align*} 
 \mu_{k+1}-\mu_k  \leqslant  \frac{\rho\sigma}{\epsilon}\|\x^{k+1}-\w^{k+1}\|^2 \leqslant  \frac{1}{6\epsilon\sigma}\|\y^{k+1}-\y^{k}\|^2.
\end{align*}
Otherwise $\mu_{k+1}=\mu_k$. Therefore, both cases result in the above condition.  Then
 \begin{align*} 
\mu_{\ell}-\mu_0 = \sum_{k=0}^{\ell-1} (\mu_{k+1}-\mu_k)  \leqslant   \sum_{k=0}^{\ell-1} \frac{1}{6\epsilon\sigma}\|\y^{k+1}-\y^{k}\|^2 
\leqslant \frac{2(f(\w^0)-f_B)}{15\epsilon},
\end{align*}
where the last inequality is from \eqref{sum-y1-y}. The above condition means that sequence $\{\mu_k \}$ is bounded from above, which by \eqref{update-mu} that it is non-decreasing shows its convergence.

2) Let $(\w^\infty,\x^\infty,\y^\infty;{\mu}_\infty)$ be an accumulation point of sequence $\{(\w^{k},\x^{k},\y^{k};{\mu}_{k})\}$, namely, there is a subset $\mathcal{K}\subseteq\{1,2,3,\ldots\}$ such that $\lim_{k\in\mathcal{K},k\to\infty}(\w^{k},\x^{k},\y^{k};{\mu}_{k})=(\w^\infty,\x^\infty,\y^\infty;{\mu}_\infty)$. Clearly, $\w^\infty=\x^\infty$ due to claim 1).  Then taking the limit of the right-hand side of \eqref{opt-x-2} along with $k\in\mathcal{K}$ results in $\y^\infty=-\nabla f(\w^\infty) = -\nabla f(\x^\infty).$ It is observed from \eqref{pi-penalty-update-w-solution}  that
\begin{align*}
\w^{k+1}
 \in{\rm Prox}_{({\mu}_k/{\sigma})\varphi}^\B(\x^k+\y^k/\sigma) = {\rm Prox}_{({\mu}_k/{\sigma})(\varphi+\delta_\B)}(\x^k+\y^k/\sigma).
\end{align*}
Using \cite[Theorem 1.25]{rock98}, taking the limit of both sides of the above inclusion along with $k\in\mathcal{K}$ yields the following inclusion,
\begin{align*} 
\x^\infty=\w^\infty&\in{\rm Prox}_{({\mu}_\infty/{\sigma})(\varphi+\delta_\B)}\left(\x^\infty+\y^\infty/\sigma\right) \\
&= {\rm Prox}_{({\mu}_\infty/{\sigma})(\varphi+\delta_\B)}\left(\x^\infty-\nabla f(\x^\infty)/\sigma\right)\\
&=   {\rm Prox}_{({\mu}_\infty/{\sigma})\varphi}^\B\left(\x^\infty-\nabla f(\x^\infty)/\sigma\right).
\end{align*}
Therefore, $\x^\infty$ is a P-stationary point with $\tau=1/\sigma$ of problem $\min_{\x\in \B}F(\x;{\mu_\infty})$. 

3) Based on Theorems \ref{firstPST} and \ref{firstKKT}, a P-stationary point is a KKT point that is binary. Hence, $\x^\infty\in\{0,1\}^n$. This means that $\x^\infty$ is isolated, which together with $\lim_{k\to\infty}\|\x^{k+1}-\x^k \|=0$ and \cite[Lemma 4.10]{more83} delivers the  whole sequence convergence. 
\end{proof}
Finally, under an additional condition on the initialization of $\mu_0$, we establish the local convergence rate. As outlined below, sequence ${\x^k}$ converges to its binary limit point $\x^{\infty}$, which is also a P-stationary point of problem $\min_{\x\in \B}F(\x;{\mu_\infty})$, at a linear rate, and sequence $\{\w^k\}$ eventually coincides with this limit, namely, $\w^{k}\equiv\x^\infty$.
\begin{theorem}\label{finite-termination} Suppose Assumption \ref{ass-Lip} holds. Let $\{(\w^{k},\x^{k},\y^{k};{\mu}_{k})\}$ be the sequence generated by Algorithm \ref{BN-algo} with $\Q^k$ and $\sigma$ chosen as (\ref{choice-Q}) and (\ref{choice-sigma}), and $\mu_0$ chosen as
\begin{equation}\label{choice-mu}
\mu_0>\overline{\mu} +\frac{\sigma (\sqrt{\alpha}+1)}{c}.
\end{equation}
Then there is an integer $\kappa >0$ such that for any $k \geqslant \kappa $,
\begin{equation}\label{convergence-rate}\begin{aligned}
\w^{k+1}\equiv\w^{k}\equiv\x^\infty&\in\{0,1\}^n,~~ \\
\|\x^k-\x^\infty\| &\leqslant \sqrt{\alpha}\gamma^{k-\kappa },~~\\
\|\y^k + \nabla f(\x^\infty)\| &\leqslant \sqrt{\alpha}\lambda \gamma^{k-\kappa },
\end{aligned}\end{equation}
where $\gamma:=\lambda /(\sigma-\lambda )\in(0,1/7]$. Consequently, after the following number of iterations, \begin{equation}\label{finite-K}
\kappa + \left\lceil \log_{\gamma} \left(\frac{\varepsilon}{\max\{1,\lambda \}\sqrt{\alpha}}\right)\right\rceil,
\end{equation}
stopping conditions in (\ref{stop}) can be satisfied, where $\lceil a\rceil$ is the ceiling of $a$.
\end{theorem}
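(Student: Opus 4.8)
The plan is to layer this linear-rate statement on top of the global-convergence result already established, reducing everything to one genuinely new fact: that the $\w$-sequence reaches its binary limit in finitely many steps. First I would observe that hypothesis (\ref{choice-mu}) in particular forces $\mu_0>\overline{\mu}$, and since $\{\mu_k\}$ is nondecreasing and convergent we get $\mu_\infty\ge\mu_0>\overline{\mu}$. Hence Theorem \ref{theorem-global-con} is available in full: the whole sequence converges, $\w^k,\x^k\to\x^\infty\in\{0,1\}^n$, $\y^k\to-\nabla f(\x^\infty)$, $\mu_k\to\mu_\infty$, and the consecutive gaps $\|\x^{k+1}-\x^k\|$, $\|\y^{k+1}-\y^k\|$, $\|\w^k-\x^k\|$ all tend to $0$. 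In particular $\x^\infty$ is a binary P-stationary point of $\min_{\x\in\B}F(\x;\mu_\infty)$.

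The crux is to produce a finite $\kappa$ with $\w^k\equiv\x^\infty$ for all $k\ge\kappa$. I would work coordinatewise from the prox form (\ref{pi-penalty-update-w-solution}), writing $\w^{k+1}\in{\rm Prox}^\B_{(\mu_k/\sigma)\varphi}(\z^k)$ with $\z^k:=\x^k+\y^k/\sigma$. Suppose $w_i^{k+1}\in(0,1)$ for some $i$; then $N_B(w_i^{k+1})=\{0\}$, so the optimality condition of the prox gives $z_i^k-w_i^{k+1}=(\mu_k/\sigma)\nu_i$ for some $\nu_i\in\partial g(w_i^{k+1})$, and c2) (i.e. (\ref{lower-bd-subdiff})) yields $\max\{|z_i^k|,|z_i^k-1|\}\ge|z_i^k-w_i^{k+1}|\ge(\mu_k/\sigma)c$. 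I would contradict this by bounding the left-hand side: using (\ref{opt-x-2}) to substitute $\y^k=-\nabla f(\w^k)-\Q^k(\x^k-\w^k)$ gives $z_i^k=x_i^k-\nabla_if(\w^k)/\sigma-[\Q^k(\x^k-\w^k)]_i/\sigma$, and since $\w^k\in\B$ we have $w_i^k\in[0,1]$, $\|\nabla f(\w^k)\|_\infty\le c\overline{\mu}$ by (\ref{lower-bd-pi}), and $\|\Q^k(\x^k-\w^k)\|\le\lambda\|\x^k-\w^k\|$. These combine into $\max\{|z_i^k|,|z_i^k-1|\}\le 1+c\overline{\mu}/\sigma+(1+\lambda/\sigma)\|\x^k-\w^k\|$. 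Because $\|\x^k-\w^k\|\to0$, for all large $k$ the right-hand side falls below $1+\sqrt{\alpha}+c\overline{\mu}/\sigma$, which by (\ref{choice-mu}) (multiplied by $c/\sigma$) is strictly less than $(\mu_0/\sigma)c\le(\mu_k/\sigma)c$ --- a contradiction. Thus $\w^{k+1}\in\{0,1\}^n$ eventually, and combining with $\w^{k+1}\to\x^\infty$ and the fact that distinct binary vectors are $\ell_\infty$-separated by $1$ forces $\w^{k+1}=\x^\infty$ for all $k\ge\kappa$. This binarization step is where I expect the real work to sit.

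Once $\w^k\equiv\x^\infty$ for $k\ge\kappa$, the iteration linearizes. Evaluating (\ref{opt-x-2}) at index $k\ge\kappa$ gives $\y^k+\nabla f(\x^\infty)=-\Q^k(\x^k-\x^\infty)$, hence at once $\|\y^k+\nabla f(\x^\infty)\|\le\lambda\|\x^k-\x^\infty\|$; feeding this into the $\x$-update (\ref{pi-penalty-update-x-solution}) with $\w^{k+1}=\x^\infty$ produces $\x^{k+1}-\x^\infty=(\sigma\I+\Q^{k+1})^{-1}\Q^k(\x^k-\x^\infty)$, so that $\|\x^{k+1}-\x^\infty\|\le(\lambda/\sigma)\|\x^k-\x^\infty\|\le\gamma\|\x^k-\x^\infty\|$ with $\gamma=\lambda/(\sigma-\lambda)\in(0,1/7]$ by (\ref{choice-sigma}). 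For the base case, $(\w^\kappa,\x^\kappa)\in\Omega$ together with $\w^\kappa=\x^\infty$ gives $\|\x^\kappa-\x^\infty\|=\|\x^\kappa-\w^\kappa\|\le\sqrt{\alpha}$ via (\ref{def-delta}); unrolling the recursion then yields the two geometric estimates of (\ref{convergence-rate}).

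Finally I would read off the iteration count by inserting these estimates into the stopping test (\ref{stop}): for $k\ge\kappa$ the first condition $\w^k\in\{0,1\}^n$ already holds, while $\max\{\|\x^k-\w^k\|,\|\y^k+\nabla f(\w^k)\|\}\le\max\{1,\lambda\}\sqrt{\alpha}\,\gamma^{k-\kappa}$; requiring this to drop below $\varepsilon$ gives $k-\kappa>\log_\gamma\big(\varepsilon/(\max\{1,\lambda\}\sqrt{\alpha})\big)$, i.e. the bound (\ref{finite-K}). The only delicate point beyond the binarization argument is the degenerate case $\alpha=0$ (where $\x^k\equiv\w^k$ and binarity of $\w^k$ is immediate from the initialization), which I would dispatch separately.
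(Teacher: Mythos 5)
Your proposal is correct and follows essentially the same route as the paper's proof: a contradiction argument combining the prox/KKT optimality condition for the $\w$-update, the subgradient lower bound $c$ from \eqref{lower-bd-subdiff}, and the threshold \eqref{choice-mu} to force $\w^{k+1}$ binary (hence eventually constant equal to $\x^\infty$), followed by a linear contraction extracted from \eqref{opt-x-2} once $\w^k$ is frozen. The only (harmless) cosmetic differences are that you bound $|z_i^k-w_i^{k+1}|$ via $\max\{|z_i^k|,|z_i^k-1|\}$ and obtain binarity only for large $k$, whereas the paper bounds $\|\y^k+\sigma(\x^k-\w^{k+1})\|_\infty$ directly and gets it for all $k\geqslant 1$, and that you derive the contraction from the explicit update \eqref{pi-penalty-update-x-solution} (yielding the slightly sharper factor $\lambda/\sigma$) rather than from $\sigma\|\x^{k+1}-\w^{k+1}\|=\|\y^{k+1}-\y^k\|$.
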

\begin{proof} Direct verification leads to
\begin{align}
\|\y^k  + \sigma(\x^k-\w^{k+1}) \|_\infty&=\|-\nabla f(\w^k)-\Q^{k} ( \x^{k} -\w^{k}) + \sigma(\x^k-\w^{k+1}) \|_\infty \nonumber\\
&=\|-\nabla f(\w^k)+(\sigma\I-\Q^{k}) ( \x^{k} -\w^{k}) + \sigma(\w^k-\w^{k+1}) \|_\infty \nonumber\\
&\leqslant\|\nabla f(\w^k)\|_\infty +\|(\sigma\I-\Q^{k}) ( \x^{k} -\w^{k})\|_\infty  + \|\sigma(\w^k-\w^{k+1}) \|_\infty \nonumber\\
&\leqslant c\overline{\mu} +\|(\sigma\I-\Q^{k}) ( \x^{k} -\w^{k})\|  + \sigma  \nonumber\\
&\leqslant c\overline{\mu} +\sigma\|\x^{k} -\w^{k}\|  + \sigma  \nonumber\\
&\leqslant c\overline{\mu} +\sigma (\sqrt{\alpha}+1),
\end{align}
where the first equality is from \eqref{opt-x-2}, the second inequality is because of $\w^k, \w^{k+1}\in\B$, \eqref{lower-bd-pi}, and $\|\w\|_\infty \leqslant \|\w\|$, and the last inequality holds due to $(\w^k, \x^{k})\in \Omega$ and \eqref{def-delta}. Suppose that there is $i$ such that $w_i^{k+1}\in(0,1).$ Then condition \eqref{opt-w-2} indicates that
\begin{align*}
{\mu}_k v^{k+1}_i - y^k_i - \sigma(x_i^k-w_i^{k+1})  =0,
\end{align*}
which leads to 
\begin{align*}
{\mu}_k   \leqslant \frac{|{\mu}_k v^{k+1}_i|}{c}  &= \frac{|y^k_i+ \sigma(x_i^k-w_i^{k+1})|}{c} \\
&\leqslant   \frac{\|\y^k  + \sigma(\x^k-\w^{k+1}) \|_\infty }{c}\\ &\leqslant  \overline{\mu} +\frac{\sigma (\sqrt{\alpha}+1)}{c},
\end{align*}
where the first inequality is due to $\v^{k+1}\in\varphi(\w^{k+1})$ and \eqref{lower-bd-subdiff}. 
This contradicts \eqref{choice-mu}. Therefore, we must have $\w^{k}\in\{0,1\}^n$ for any $k \geqslant 1$. As $\lim_{k\to\infty}\|\w^{k+1}-\w^k \|=0$,  there is an integer $\kappa >0$ such that for any $k \geqslant \kappa $, 
\begin{equation*} 
\w^{k+1}\equiv\w^{k}\in\{0,1\}^n.
\end{equation*}
Since $\mu_\infty \geqslant \mu_0>\overline{\mu}$ from \eqref{update-mu}, and \eqref{choice-mu}, sequence $\{ \x^{k}\}$ converges to a P-stationary point denoted by $ \x^\infty$ by Theorem \ref{theorem-global-con} 3). This indicates that $ \w^{k}\to\x^\infty$  due to $ \|\x^{k}-\w^{k}\|\to 0$ and $\y^{k}\to -\nabla f(\x^\infty)$ due to \eqref{opt-x-2}. Therefore, we have 
\begin{equation}\label{wk1=wk}
\w^{k+1}\equiv\w^{k} \equiv \x^\infty\in\{0,1\}^n,~~\forall~k \geqslant  \kappa .
\end{equation}
This condition and \eqref{opt-x-2} enable us to obtain that for any $k \geqslant \kappa $,
\begin{align*}
\sigma \|\x^{k+1} -\w^{k+1} \| = \| \y^{k+1} - \y^{k} \|  &= \|   \Q^{k+1} ( \x^{k+1} -\w^{k+1}) - \Q^{k} ( \x^{k} -\w^{k})\|\\
&\leqslant  \lambda  \| \x^{k+1} -\w^{k+1} \| +  \lambda  \|   \x^{k} -\w^{k}\|,
\end{align*}
which immediately results in
\begin{align*}
\|\x^{k+1} -\w^{k+1} \|  \leqslant   \gamma \|   \x^{k} -\w^{k}\|.
\end{align*}
where $\gamma=\lambda /(\sigma-\lambda )\in(0,1/7]$ due to \eqref{choice-sigma}. 
This further yields that
\begin{align*}
 \|   \x^{k} -\w^{k}\|\leqslant \gamma \|   \x^{k-1} -\w^{k-1}\| \leqslant \cdots \leqslant \gamma^{k-\kappa } \|   \x^{\kappa } -\w^{\kappa }\|\leqslant \sqrt{\alpha}\gamma^{k-\kappa } ,
\end{align*}
where the last inequality is from $(\w^k, \x^{k})\in \Omega$ for any $k \geqslant 0$ and \eqref{def-delta}. The above condition and  \eqref{opt-x-2} allow us to derive that
\begin{align*}
\| \nabla f(\w^{k})+\y^{k}\|\leqslant \|  \Q^{k} ( \x^{k} -\w^{k})\| \leqslant \sqrt{\alpha}\lambda \gamma^{k-\kappa }.
\end{align*}
Using \eqref{wk1=wk} and replacing $\w^{k}$ by $\x^\infty$ in the above two conditions show  \eqref{convergence-rate}. Finally, based on \eqref{convergence-rate}, it is easy to verify that conditions in (\ref{stop}) can be satisfied after the $t$th iteration, where $t$ is the number defined in \eqref{finite-K}.
\end{proof}

\begin{remark}
Theorem 4.2 establishes a sufficient condition for achieving the linear convergence rate, but it is not necessary. In particular, the algorithm may still converge even when the initial parameter $\mu_0$ does not satisfy the required inequality. In practice, the quantity $\bar{\mu}$ is problem-dependent and typically unavailable in exact form; only a loose upper bound can be obtained, which offers limited guidance for choosing $\mu_0$.
	
More importantly, our algorithm enforces a monotonic increase of $\mu_k$ along the iterations. Therefore, even if $\mu_0$ fails to meet the sufficient condition, there exists a certain iteration index $k$ such that $\mu_k$ exceeds the theoretical threshold, namely, $\mu_k>\overline{\mu} + {\sigma (\sqrt{\alpha}+1)}/{c}$. From that point onward, $\mu_k$ can be viewed as a new initialization satisfying the condition, ensuring the linear convergence rate thereafter. This observation justifies the use of heuristic choices for $\mu_0$ in our experiments: while they provide a reasonable starting point, the monotonic update mechanism guarantees eventual fulfillment of the sufficient condition.
\end{remark}
A direct result of Theorem \ref{finite-termination} is the following termination within finitely many steps.
\begin{corollary} 
Suppose Assumption \ref{ass-Lip} holds. Let $\{(\w^{k},\x^{k},\y^{k};{\mu}_{k})\}$ be the sequence generated by Algorithm \ref{BN-algo} with $\Q^k\equiv\mathbf{0}$, $\sigma$ and $\mu_0$ chosen as (\ref{choice-sigma}) and (\ref{choice-mu}).
Then there is an integer $\kappa >0$ such that for any $k \geqslant \kappa $,
\begin{equation}\label{convergence-rate-finite}\begin{aligned}
\w^{k} \equiv \x^k \equiv\x^\infty&\in\{0,1\}^n,~~ 
\y^k \equiv - \nabla f(\x^\infty).
\end{aligned}\end{equation}
Consequently, Algorithm \ref{BN-algo} terminates within finitely many iterations.
\end{corollary}
\begin{proof} When $\Q^k\equiv\mathbf{0}$, for any $k \geqslant \kappa $,
$$\y^{k}=-\nabla f(\w^{k})=-\nabla f(\w^\infty),$$
where the two equalities are from \eqref{opt-x-2} and \eqref{wk1=wk}. This condition and \eqref{pi-penalty-update-y} enable  $\x^{k}=\w^{k}=\w^\infty$, where the second equality is due to \eqref{wk1=wk}. These prove conditions in \eqref{convergence-rate-finite}, which guarantee \eqref{stop} within finitely many iterations.
\end{proof}

\section{Numerical experiment}
{In this section, we test ShaPeak (available at \url{https://github.com/ShuaiLi2025/ShaPeak}) on solving four problems: the recovery problems, the Multiple-Input-Multiple-Output (MIMO) detection, the Max-Cut problems,  and the quadratic unconstrained binary optimization (QUBO). All MATLAB codes are implemented in MATLAB (R2021a) on a desktop computer equipped with an Intel(R) Xeon W-3465X CPU (2.50 GHz) and 128 GB of RAM. The Python implementation is carried out using JAX 0.6.1 on the same machine with an NVIDIA RTX A5000 GPU. Unless otherwise specified, we use MATLAB’s default random number generator \texttt{rng(`default')}.} 

\subsection{Hyperparameter settings} \label{tab:Pars}
Hyperparameters in Algorithm \ref{BN-algo} are set as: $\w^0=\x^0$, $\rho=1$, $\epsilon=10^{-10}$, $\varepsilon=\sqrt{n}/10^5$ for all testing problems. It is noted that to guarantee the convergence in Theorem \ref{theorem-global-con}, $\sigma$ should be chosen to satisfy   ${\sigma > 8\max\{\lambda,\beta\}}$ in (\ref{choice-sigma}). However, this condition is sufficient but unnecessary. Moreover, since $\beta$ is relevant to the Lipschitz constant of $\nabla f$ and is problem-dependent, its estimation is uneasy. Therefore, enforcing condition  (\ref{choice-sigma}) provides little guidance for numerical experiments.

\begin{table}[!th]
	\renewcommand{\arraystretch}{1.0}\addtolength{\tabcolsep}{1.0pt}
	\centering
	\caption{Parameter settings of ShaPeak for different problems.}
	\label{tab:shapeak-parameters}
		\begin{tabular}{lcccccc}
			\toprule
			
			& Recovery
			& Classical 
			& One-bit
			& Max-
			& QUBO
			& QUBO\\		 
			Para.& problem
			& MIMO
			& MIMO
			& Cut
			& (Simulated)
			& (BiqBin) \\
			\midrule
			$\x^0$
			& $\mathbf{0}$
			& $\mathbf{0}$
			& $\mathbf{0}$
			& $\widetilde{\x}^0$ 
			& $\mathbf{0.5}$
			& $\widetilde{\x}^0$  \\
			
			
			$\y^0$
			& $\mathbf{0}$
			& $\mathbf{0}$
			& $\mathbf{0}$
			& $-\nabla f(\x^0)$
			& $\mathbf{0}$
			& $-\nabla f(\x^0)$ \\
			
			$\Q^k$
			& $\mathbf{A}^{\top}\mathbf{A}$
			& $\mathbf{A}^{\top}\mathbf{A}$
			& $ \widetilde{\mathbf{A}}^{\top}\widetilde{\mathbf{A}} $
			& $\widetilde{\mathbf{Q}}^k$
			& $\mathbf{0}$
			& $\widetilde{\mathbf{Q}}^k$ \\\addlinespace[2pt]
			
			$\mu_0$
			& $\dfrac{5\|\mathbf{b}^{\top}\mathbf{A}\|}{\sqrt{n}10^t}$
			& $ \dfrac{\sqrt{n} \|\y^{\top}\H\|_{\infty}}{10^4}$
			& $\dfrac{\|\y^{\top}\H\|_{\infty}\log n^r}{10^3}$
			& $\dfrac{1}{10^{6}}$
			& $\dfrac{\|\mathbf{Q}\|_F}{2\times10^5}$
			& $\dfrac{1}{10^{5}}$ \\\addlinespace[4pt]
			
			$\sigma_0$
			& $\widetilde{\sigma}$
			& $\dfrac{32}{\log n}$
			& $\dfrac{n}{10^3}$
			& $\widetilde{\sigma}$
			& $0.01$
			& $12$ \\
			
			$k_0$
			& $\widetilde{k}_0$
			& $10$
			& $10$
			& $100$
			& $10$
			& $100$ \\
			
			$\eta$
			& $2.5$
			& $3$
			& $\dfrac{r+35}{16}$
			& $2.25$
			& $2.1$
			& $2.25$ \\
%
%
%
			\bottomrule
	\end{tabular}
\end{table}

 Based on empirical experience,  $\sigma$ are usually updated during iterations for augmented Lagrangian methods. Therefore, in our numerical experiments, we update it as follows:
\[
\sigma_{k+1}=
\begin{cases}
1.2\,\sigma_k, & 
\text{if } \text{~~mod}(k,10)=0,~{\rm tol}_k  
>  \epsilon,\\ 
\sigma_k / 1.1, & 
\text{if } \text{~~mod}(k,10)=0,~{\rm tol}_k  <  \epsilon,~ \varphi(\w^{k})\neq 0,\\ 
\sigma_k, & \text{otherwise}.
\end{cases}
\]
where ${\rm tol}_k $ is defined by \eqref{stop}. 
Other hyperparameters $(\x^0,\y^0,\Q^k,\mu_0,\sigma_0,k_0,\eta)$ for different testing problems are set as these in Table \ref{tab:shapeak-parameters}.  
For recovery problems, $t=2q-4+10s/n$, 
		$\widetilde{\sigma}=\min(0.5,(0.6-s/n)10^{p-3})$, and 
		$\widetilde{k}_0=\max\{10,\,2\lceil 100s/(n(p-1))\rceil\}$.
		For 1-bit MIMO, $r=1$ if $N<5000$, and $r=5$ otherwise. Here, $\widetilde{\mathbf{A}}:=\mathrm{Diag}(\y)\mathbf{H}/{\varrho}$ and ${\rm Diag}(\y)$ denotes the diagonal matrix formed by $\y$.
		For Max-Cut, $\widetilde{\sigma}=1$ if the number of nodes is less than $7000$, and $\widetilde{\sigma}=2$ otherwise.		
	Following the approach in \cite{chen2023montecarlopolicygradient,liu2025smoothing}, where multiple runs under random initializations are used to obtain better solutions, we apply the same strategy to Max-Cut and QUBO (BiqBin). Specifically, we randomly sample 100 initial points $\widetilde{\x}^0$ from the uniform distribution on $[0,1]$, run ShaPeak in parallel, and retain the best solution across all runs.
	Moreover, for these two testing problems, preconditioning matrix $\widetilde{\mathbf{Q}}^k$ in Table \ref{tab:shapeak-parameters} is updated using the Adam  second-moment  \cite{kinga2015method}. To be more specific, by letting $\boldsymbol{g}^k:=(\nabla f(\w^k)+\y^k)/\sigma$  and $\odot$ denote the Hadamard product, we update 
\begin{equation*}\label{choice-v-shapeak}
	\m^k =\beta_1 \m^{k-1}+(1-\beta_1)\boldsymbol{g}^k, \qquad  \v^k =\beta_2 \v^{k-1}+(1-\beta_2)(\boldsymbol{g}^k\odot \boldsymbol{g}^k), 
\end{equation*}
where  $\beta_1\in(0,1)$ and $\beta_2\in(0,1)$. Then choose preconditioning matrix $\widetilde{\mathbf{Q}}^k$ to satisfy 
\begin{equation*}\label{choice-Q-adam}
	\sigma\mathbf{I} + \widetilde{\mathbf{Q}}^k
	=  \mathrm{Diag}\left(\sigma\boldsymbol{g}^k\odot\left(\frac{\v^k+10^{-8}}{1-\beta_2^k}\right)^{1/2}\odot\left( \frac{\alpha{\m}^k}{1-\beta_1^k} \right)^{-1}\right),
\end{equation*} 
where $\alpha>0$, $\v^{1/2}:=( \sqrt{v_1},\ldots,\sqrt{v_n})^\top$, $\m^{-1}:=( 1/m_1,\ldots, 1/m_n)^\top$, and $\mathrm{Diag}(\v)$ denotes the diagonal matrix formed by $\v$. For Max-Cut and QUBO (BiqBin), $(\beta_1,\beta_2,\alpha)=(0.9,0.999,3.5)$. 

%
 
\subsection{Recovery problem}\label{sec:rec}
The problem is to recover a signal $\x^*$ from a linear model
${{\textbf b}={\textbf A}\x^*+  {\boldsymbol \varepsilon}}$, where ${{\textbf A}\in\R^{m\times n}}$,  ${\textbf b}\in \R^m$, and ${\boldsymbol \varepsilon}\in \R^m$ is the noise. By letting ${\|\x \|_q^{q}=\sum_{i=1}^n|x_i|^q}$ and ${q>1}$, this problem can be addressed by solving the following optimization,
 \begin{eqnarray*}
\min\limits_{\x\in{\R^{n}}}~~ \frac{1}{2}\|\textbf{A}\x-\textbf{b}\|_q^{q},~~~
{\rm s.t.}~~   \x\in \{0,1\}^{n}.
\end{eqnarray*} 
\subsubsection{Data generation}
 We generate $({\textbf A}, {\textbf b}, \x^*)$ as follows. Entries of ${\textbf A}$ are  independently and identically distributed (IID) from ${\cal N}(0,1)$, a normal distribution with mean 0 and variance $1$, and then let ${{\textbf A}={\textbf A}/c}$, where ${c=\sqrt{m}}$ if ${n \leqslant 10^4}$ and ${c=1}$ otherwise. Ground truth signal $\x^*$ is taken from $\{0,1\}^n$ with randomly picked $s$ indices on which entries are $1$. Finally, let ${{\textbf b}={\textbf A}\x^*+ {\rm nf}\cdot {\boldsymbol \varepsilon}}$, where ${\rm nf}$ is the noise ratio and the $i$th entry of the noise is ${\varepsilon_i\sim\mathcal{N}(0,1)}$. 

\subsubsection{Benchmark methods}
The algorithms selected for the comparison include NPGM,  MEPM \cite{Yuan17}, L2ADMM \cite{Wu19}, EMADM \cite{liu23}, and GUROBI. Specifically, NPGM denotes Nesterov’s proximal gradient method applied to the LP relaxation of \eqref{UBIP}, while EMADM is employed to solve its semidefinite relaxation. It should be emphasized that both GUROBI and EMADM are only capable of handling binary quadratic programming instances, i.e., the recovery problem with ${q=2}$. Parameters for all algorithms are configured as follows. 
For MEPM, the parameters are set to ${(\rho,\sigma,T) = (0.01, \sqrt{10}, 10)}$, and for L2ADMM, ${(\alpha,\sigma,T) = (0.01, \sqrt{10}, 10)}$. Both methods are terminated after at most 200 iterations or when the tolerance reaches $0.01$. {In addition, each requires a step-size parameter $L$, corresponding to the Lipschitz constant of the gradient, which is set to ${L = \|\mathbf{A}^{\top}\mathbf{A}\|}$.}
For EMADM, the parameters are configured as ${(\beta, \lambda_0, \delta) = (0.1, 0.001, 10^{-4})}$, with the maximum number of iterations set to $10^4$ and tolerance to $10^{-4}$.
All algorithms are initialized at ${\x_0 = \mathbf{0}}$ and are restricted to a maximum runtime of 600 seconds. {We assess performance by the normalized objective gap, defined by 
\begin{eqnarray*}
	\text{Gap} = \frac{ \text{obj}-\text{lowest} }{1+\text{obj}+\text{lowest}},
\end{eqnarray*} 
where `obj' denotes the objective value obtained by a specific algorithm, and `lowest' refers to the lowest objective value achieved among all compared methods.
Clearly, a smaller gap indicates better recovery performance.}

\subsubsection{Numerical comparison}
We first demonstrate the effect of different SPFs for ShaPeak by selecting three types of SPF functions: $g(x;\omega,1,1,1,1)$, $g(x;\omega,a,a,2,2)$, and $h(x;\omega,a,a,2,2)$ defined in \eqref{def-g-h} and \eqref{def-h-h}. Setting ${a=2.5}$ and ${\omega \in \{0, 0.5, 1\}}$ yields nine distinct functions. Let $\text{SPF}_1-\text{SPF}_9$ stand for them as $g(x$;0,1,1,1,1$)$, $g(x$;0,2.5,2.5,2,2$)$, $h(x$;0,2.5,2.5,2,2$)$, $g(x$;1,1,1,1,1$)$, $g(x$;1,2.5,2.5,2,2$)$, $h(x$;1,2.5,2.5,2,2$)$, $g(x$;0.5,1,1,1,1$)$, $g(x$;0.5,2.5,2.5,2,2$)$, and $h(x$;0.5,2.5,2.5,2,2 $)$, respectively.  The mean and standard deviation (std) over 50 independent trials are reported in Table \ref{tab:SPF-comp},  where  $(m,n,\mathrm{nf}) = (500,1000,0)$. As shown in the table, in most cases, the last two functions $\text{SPF}_8$ and $\text{SPF}_9$ defined by
\begin{eqnarray}\label{particular-gh}
&&g(x;0.5,2.5,2.5,2,2)=
\begin{cases} { (2x+5)^2}/{8}- {25}/{8},&x \leqslant  {1}/{2},\\ 
 { (2x-7)^2}/{8}- {25}/{8},&x>  {1}/{2},\\
\end{cases}  \\ 
&& h(x;0.5,2.5,2.5,2,2)=
\begin{cases} {25}/{8}- {(2x-5)^2}/{8},&x \leqslant  {1}/{2},\\ 
 {25}/{8}- {(2x+3)^2}/{8},&x>  {1}/{2},
\end{cases}
\end{eqnarray}  
consistently achieve the highest accuracy.  Therefore, we select them in the subsequent numerical experiment, and denote the resulting algorithms as ShaPeakg and ShaPeakh.

 \begin{table}[!th]
	\renewcommand{\arraystretch}{1.05}\addtolength{\tabcolsep}{-3pt}
	\centering
\caption{Comparison of different SPFs for recovery problems. Values are reported as mean (std).}\label{tab:SPF-comp} 
\begin{tabular}{lccccccccc}
\toprule
 & \multicolumn{3}{c}{$q=1.5$} & \multicolumn{3}{c}{$q=2$} & \multicolumn{3}{c}{$q=2.5$} \\
\cmidrule(lr){2-4} \cmidrule(lr){5-7} \cmidrule(lr){8-10}
 & $s=100$ & $s=300$ & $s=500$ & $s=100$ & $s=300$ & $s=500$ & $s=100$ & $s=300$ & $s=500$ \\
\midrule
$\mathrm{SPF}_1$ & 0(0) & 0(0) & 0.12(0.15) & 0(0) & 0.04(0.17) & 0.19(0.37) & 0(0) & 0.24(0.38) & 0.36(0.44) \\
$\mathrm{SPF}_2$ & 0(0) & 0(0) & 0.17(0.15) & 0(0) & 0.00(0.00) & 0.07(0.24) & 0(0) & 0.00(0.00) & 0.34(0.40) \\
$\mathrm{SPF}_3$ & 0(0) & 0(0) & 0.16(0.17) & 0(0) & 0.00(0.00) & 0.03(0.17) & 0(0) & 0.00(0.00) & 0.31(0.44) \\
$\mathrm{SPF}_4$ & 0(0) & 0(0) & 0.28(0.43) & 0(0) & 0.04(0.17) & 0.14(0.33) & 0(0) & 0.16(0.35) & 0.45(0.46) \\
$\mathrm{SPF}_5$ & 0(0) & 0(0) & 0.12(0.18) & 0(0) & 0.00(0.00) & 0.04(0.12) & 0(0) & 0.00(0.00) & 0.52(0.45) \\
$\mathrm{SPF}_6$ & 0(0) & 0(0) & 0.18(0.37) & 0(0) & 0.00(0.00) & 0.06(0.12) & 0(0) & 0.00(0.00) & 0.37(0.44) \\
$\mathrm{SPF}_7$ & 0(0) & 0(0) & 0.21(0.40) & 0(0) & 0.07(0.24) & 0.14(0.33) & 0(0) & 0.09(0.28) & 0.45(0.45) \\
$\mathrm{SPF}_8$ & 0(0) & 0(0) & 0.10(0.17) & 0(0) & 0.00(0.00) & 0.05(0.15) & 0(0) & 0.00(0.00) & 0.26(0.40) \\
$\mathrm{SPF}_9$ & 0(0) & 0(0) & 0.11(0.13) & 0(0) & 0.00(0.00) & 0.03(0.17) & 0(0) & 0.00(0.00) & 0.31(0.40) \\
\bottomrule
\end{tabular}
\end{table}
 \begin{table}[!th]
\renewcommand{\arraystretch}{1.05}\addtolength{\tabcolsep}{-1pt}
\centering
\caption{Effect of $m$, $s$, nf, and $q$ for recovery problems. Values are presented as mean (std) [best].}\label{tab:recovery-msnfq}
\begin{tabular}{lccccc} \toprule
Algs. & $m=300$ & $m=350$ & $m=400$ & $m=450$ & $m=500$ \\
\midrule
ShaPeakg & 0.00(0.00)[0.00] & 0.00(0.00)[0.00] & 0.00(0.00)[0.00] & 0.00(0.00)[0.00] & 0.00(0.00)[0.00] \\
ShaPeakh & 0.00(0.00)[0.00] & 0.00(0.00)[0.00] & 0.00(0.00)[0.00] & 0.00(0.00)[0.00] & 0.00(0.00)[0.00] \\
MEPM & 0.00(0.00)[0.00] & 0.00(0.00)[0.00] & 0.00(0.00)[0.00] & 0.00(0.00)[0.00] & 0.00(0.00)[0.00] \\
L2ADMM & 0.00(0.00)[0.00] & 0.00(0.00)[0.00] & 0.00(0.00)[0.00] & 0.00(0.00)[0.00] & 0.00(0.00)[0.00] \\
NPGM & 0.89(0.03)[0.84] & 0.82(0.06)[0.73] & 0.86(0.13)[0.65] & 0.83(0.10)[0.70] & 0.28(0.32)[0.00] \\
GUROBI & 0.41(0.43)[0.00] & 0.27(0.35)[0.00] & 0.00(0.00)[0.00] & 0.00(0.00)[0.00] & 0.00(0.00)[0.00] \\
EMADM & 0.84(0.01)[0.83] & 0.42(0.30)[0.16] & 0.00(0.00)[0.00] & 0.00(0.00)[0.00] & 0.00(0.00)[0.00] \\
\midrule
 & $s=100$ & $s=200$ & $s=300$ & $s=400$ & $s=500$ \\
\midrule
ShaPeakg & 0.00(0.00)[0.00] & 0.00(0.00)[0.00] & 0.00(0.00)[0.00] & 0.00(0.00)[0.00] & 0.09(0.28)[0.00] \\
ShaPeakh & 0.00(0.00)[0.00] & 0.00(0.00)[0.00] & 0.00(0.00)[0.00] & 0.00(0.00)[0.00] & 0.09(0.28)[0.00] \\
MEPM & 0.00(0.00)[0.00] & 0.00(0.00)[0.00] & 0.00(0.00)[0.00] & 0.00(0.00)[0.00] & 0.80(0.28)[0.00] \\
L2ADMM & 0.00(0.00)[0.00] & 0.00(0.00)[0.00] & 0.00(0.00)[0.00] & 0.00(0.00)[0.00] & 0.26(0.42)[0.00] \\
NPGM & 0.31(0.24)[0.00] & 0.46(0.49)[0.00] & 0.57(0.49)[0.00] & 0.57(0.49)[0.00] & 0.62(0.43)[0.00] \\
GUROBI & 0.00(0.00)[0.00] & 0.00(0.00)[0.00] & 0.00(0.00)[0.00] & 0.35(0.45)[0.00] & 0.53(0.41)[0.00] \\
EMADM & 0.00(0.00)[0.00] & 0.00(0.00)[0.00] & 0.09(0.29)[0.00] & 0.36(0.47)[0.00] & 0.46(0.48)[0.00] \\
\midrule
  & $\mathrm{nf}=0.00$ & $\mathrm{nf}=0.02$ & $\mathrm{nf}=0.04$ & $\mathrm{nf}=0.06$ & $\mathrm{nf}=0.08$ \\
\midrule
ShaPeakg & 0.00(0.00)[0.00] & 0.00(0.00)[0.00] & 0.00(0.00)[0.00] & 0.00(0.00)[0.00] & 0.00(0.00)[0.00] \\
ShaPeakh & 0.00(0.00)[0.00] & 0.00(0.00)[0.00] & 0.00(0.00)[0.00] & 0.00(0.00)[0.00] & 0.00(0.00)[0.00] \\
MEPM & 0.00(0.00)[0.00] & 0.00(0.00)[0.00] & 0.00(0.00)[0.00] & 0.00(0.00)[0.00] & 0.00(0.00)[0.00] \\
L2ADMM & 0.00(0.00)[0.00] & 0.00(0.00)[0.00] & 0.00(0.00)[0.00] & 0.00(0.00)[0.00] & 0.01(0.00)[0.01] \\
NPGM & 0.57(0.49)[0.00] & 0.71(0.40)[0.00] & 0.64(0.36)[0.01] & 0.86(0.06)[0.69] & 0.81(0.03)[0.75] \\
GUROBI & 0.00(0.00)[0.00] & 0.00(0.00)[0.00] & 0.13(0.26)[0.00] & 0.22(0.34)[0.00] & 0.37(0.20)[0.00] \\
EMADM & 0.00(0.00)[0.00] & 0.00(0.00)[0.00] & 0.02(0.02)[0.00] & 0.24(0.35)[0.00] & 0.44(0.26)[0.00] \\
\midrule
& $q=1.50$ & $q=1.75$ & $q=2.00$ & $q=2.25$ & $q=2.50$ \\
\midrule
ShaPeakg & 0.00(0.00)[0.00] & 0.00(0.01)[0.00] & 0.09(0.27)[0.00] & 0.08(0.26)[0.00] & 0.16(0.21)[0.00] \\
ShaPeakh & 0.03(0.08)[0.00] & 0.00(0.00)[0.00] & 0.00(0.00)[0.00] & 0.00(0.00)[0.00] & 0.10(0.16)[0.00] \\
MEPM & 0.23(0.40)[0.00] & 0.19(0.39)[0.00] & 0.80(0.28)[0.00] & 0.84(0.01)[0.84] & 0.52(0.36)[0.03] \\
L2ADMM & 0.13(0.31)[0.00] & 0.01(0.02)[0.00] & 0.45(0.47)[0.00] & 0.34(0.43)[0.00] & 0.50(0.37)[0.00] \\
NPGM & 0.83(0.31)[0.00] & 0.14(0.33)[0.00] & 0.67(0.46)[0.00] & 0.37(0.48)[0.00] & 0.54(0.40)[0.00] \\
\bottomrule
\end{tabular}
\end{table}

\begin{table}[!th]
\renewcommand{\arraystretch}{1.1}\addtolength{\tabcolsep}{4pt}
\centering
\caption{Effect of higher $n$  for recovery problems. Values are presented as mean (std) [best].}\label{tab:highdim-nf}
\begin{tabular}{llrrrr}
\toprule
&Algs. & $n=10^4$ & $n=10^5$ & $n=10^6$ \\
\midrule
\multicolumn{5}{c}{{\rm nf} = 0.0}\\
\midrule
& ShaPeakg & 0.25(0.03)[0.22] & 15.86(3.25)[12.30] & 61.53(6.26)[51.64] \\
& ShaPeakh & 0.25(0.03)[0.22] & 16.76(4.68)[11.90] & 64.53(8.01)[49.79] \\
$q=1.5$ & MEPM & 26.80(0.09)[26.71] & 930.0(28.75)[894.7] & 2212.9(49.93)[2144.7] \\
& L2ADMM & 51.14(0.15)[50.92] & 1764.8(68.58)[1659.0] & 3612.1(16.26)[3601.4] \\
& NPGM & 14.91(0.23)[14.51] & 588.8(14.06)[573.1] & 1326.0(50.80)[1222.7] \\
\midrule
& ShaPeakg & 0.26(0.06)[0.21] & 49.76(3.38)[45.81] & 74.44(39.93)[32.94] \\
& ShaPeakh & 0.25(0.05)[0.21] & 53.10(4.72)[47.10] & 76.85(39.48)[31.70] \\
$q=2$ & MEPM & 20.64(0.28)[20.28] & 672.6(32.75)[625.0] & 698.0(37.44)[651.0] \\
& L2ADMM & 30.22(0.16)[29.79] & 1057.2(48.14)[997.7] & 1721.3(58.94)[1634.9] \\
& NPGM & 13.14(0.24)[12.74] & 587.3(19.06)[564.2] & 1187.5(55.92)[1085.8] \\
\midrule
& ShaPeakg & 0.52(0.05)[0.44] & 49.91(4.43)[45.49] & 110.1(40.44)[54.58] \\
& ShaPeakh & 0.52(0.05)[0.42] & 48.26(3.27)[43.68] & 110.2(39.33)[60.47] \\
$q=2.5$ & MEPM & 15.19(0.18)[14.83] & 733.6(36.04)[679.4] & 925.4(28.17)[860.0] \\
& L2ADMM & 19.43(0.26)[19.11] & 1122.3(42.67)[1053.6] & 2051.1(53.34)[1974.5] \\
& NPGM & 15.30(0.08)[15.20] & 592.7(18.60)[566.1] & 1408.3(55.67)[1301.3] \\
\midrule
\multicolumn{5}{c}{{\rm nf} = 0.1}\\
\midrule
& ShaPeakg & 6.37(7.93)[0.22] & 23.33(4.28)[12.57] & 74.44(3.01)[70.16] \\
& ShaPeakh & 6.35(7.93)[0.23] & 23.16(4.97)[12.27] & 73.70(5.65)[64.50] \\
$q=1.5$ & MEPM & 31.65(0.32)[31.37] & 1083.2(18.27)[1055.6] & 2541.7(68.08)[2458.9] \\
& L2ADMM & 54.26(0.31)[53.72] & 1971.2(41.36)[1878.5] & 3627.0(16.45)[3607.5] \\
& NPGM & 15.06(0.17)[14.76] & 615.5(25.52)[560.7] & 1401.0(58.43)[1312.7] \\
\midrule
& ShaPeakg & 0.26(0.05)[0.22] & 12.22(1.66)[9.55] & 66.33(3.35)[61.55] \\
& ShaPeakh & 0.26(0.04)[0.21] & 12.03(2.08)[9.18] & 65.15(2.23)[62.67] \\
$q=2$ & MEPM & 22.35(0.15)[22.16] & 820.2(20.16)[780.8] & 1361.4(38.73)[1303.2] \\
& L2ADMM & 34.80(0.27)[34.44] & 1523.7(25.26)[1467.4] & 3389.9(37.79)[3348.7] \\
& NPGM & 13.43(0.17)[13.28] & 613.3(18.43)[587.3] & 1223.2(23.37)[1194.6] \\
\midrule
& ShaPeakg & 0.55(0.08)[0.45] & 23.09(1.90)[20.09] & 53.14(3.54)[47.44] \\
& ShaPeakh & 0.54(0.07)[0.44] & 23.15(2.40)[20.08] & 52.84(2.14)[50.69] \\
$q=2.5$ & MEPM & 19.77(0.16)[19.54] & 777.1(51.53)[693.6] & 1337.1(51.97)[1265.9] \\
& L2ADMM & 28.51(0.17)[28.31] & 1373.1(86.06)[1231.8] & 3590.2(28.67)[3539.1] \\
& NPGM & 15.52(0.09)[15.34] & 599.9(39.06)[549.1] & 1373.7(41.64)[1332.7] \\
\bottomrule
\end{tabular}
\end{table}

{\bf a) Effect of $m$.} We fix ${(n,s,q,{\rm nf})=(1000,100,2,0)}$ and vary ${m\in\{300,350,\ldots,500\}}$. {For each case of $(m,n,s,q,{\rm nf})$, we run 20 independent trials and report the best value, as well as the mean and standard deviation of the results. As shown in Table \ref{tab:recovery-msnfq}, the objective gap generally decreases as $m$ increases, indicating improved recovery performance. Once $m\geqslant 400$, all algorithms except NPGM achieve zero gap.}

{\bf b) Effect of $s$.} 
We fix ${(m,n,q,\mathrm{nf}) = (500,1000,2,0)}$ and vary ${s \in \{100,200,\ldots,500\}}$. The best values, together with the mean results and the corresponding standard deviation over 20 independent trials, are reported in Table \ref{tab:recovery-msnfq}. As $s$ increases, recovering the ground-truth signal becomes progressively more challenging. It can be seen that ShaPeakg and ShaPeakh consistently attain the lowest gap across all settings. Moreover, they achieve exact recovery, indicated by zero gap, in all cases except those with $s=500$.

{\bf c) Effect of $\text{\rm nf}$.} We fix ${(m,n,s,q)=(500,1000,300,2)}$ but increase ${\rm nf}$ over range $\{0,0.02,\ldots,$ $0.08\}$.  The best values, together with the mean results and the corresponding standard deviation over 20 independent trials, are reported in Table \ref{tab:recovery-msnfq}. ShaPeakg, ShaPeakh, and MEPM consistently attain the lowest gap across all settings, indicating stronger robustness to noise.
 
{\bf d) Effect of $q$.} We fix ${(m,n,s,\rm{nf})=(500,1000,500,0)}$ but increase $q$ over range $\{1.5,1.75,\ldots,2.5\}$. The best values, together with the mean results and standard deviation over 20 trials, are reported in Table \ref{tab:recovery-msnfq}. As mentioned earlier, GUROBI and EMADM cannot handle non-quadratic cases (namely, $q \ne 2$) and are therefore excluded from this experiment. Again, ShaPeakg and Shapeakh maintain the lowest gap for all cases.

{\bf e) Effect of higher dimensions.} In this experiment, we exclude GUROBI and EMADM for the comparison due to their dramatically increased computational time when $n \geqslant 10^4$. By fixing $(m, s) = (0.5n, 0.01n)$, we choose ${n\in\{10^4, 10^5, 10^6\}}$ and $\rm{nf}\in\{0,0.1\}$. Moreover,  sparse matrices $\textbf{A}$ with $10^8$ nonzero entries are used when ${n \geqslant 10^5}$, resulting in density $10^9/(5n^2)$. 
Since all mean, standard deviation, and best values of the gap obtained by ShaPeak, MEPM, L2ADMM, and NPGM over 10 independent trials are identical (to zero), these results have been omitted. For running time reported in Table~\ref{tab:highdim-nf}, ShaPeak consistently outperforms the other three algorithms.

\subsection{Classical MIMO detection}
The task of the classical MIMO detection is to recover a signal $\w^*$ from a linear model, $\y=\H\w^*+\boldsymbol{\varepsilon}$, where $\H\in\C^{m\times n/2}$ is the channel matrix,  $\y\in \C^m$ is the received signal, and $\boldsymbol{\varepsilon}\in \C^m $ is the Gaussian noise whose entries are IID from ${\cal N}(0,\varrho^2)$. Let ground truth signal $\w^*\in \Omega:=\{\w\in\C^{n/2}: {\rm Re}(\w)\in\{0,1\}^{n/2},{\rm Im}(\w)\in\{0,1\}^{n/2}\}$, where ${\rm Re}(\w)$ and ${\rm Im}(\w)$ represent the real and imagery part of $\w$. The classical MIMO detection can be addressed by maximizing the likelihood as follows,
 \begin{eqnarray*}
 \min\limits_{\x\in{\R^{n}}}~~ \frac{1}{2}\|\textbf{A}\x-\textbf{b}\|^2,~~~
{\rm s.t.}~~   \x\in \{0,1\}^{n},
\end{eqnarray*}
where
\begin{eqnarray}\label{def-A-b-x}
\textbf{A}:= \left[
 \begin{array}{lr}
 {\rm Re}(\H)& -{\rm Im}(\H)\\
 {\rm Im}(\H)& {\rm Re}(\H)\\
 \end{array}
 \right],\qquad \textbf{b}:= \left[
 \begin{array}{ll}
 {\rm Re}(\y)\\
 {\rm Im}(\y)\\
 \end{array}
 \right],\qquad \x:= \left[
 \begin{array}{ll}
 {\rm Re}(\w)\\
 {\rm Im}(\w)\\
 \end{array}
 \right].
\end{eqnarray}  
   \begin{figure}[!t]
 	\centering
 	
 	\begin{subfigure}[b]{0.49\textwidth}
 		\centering
 		\includegraphics[width=0.99\textwidth]{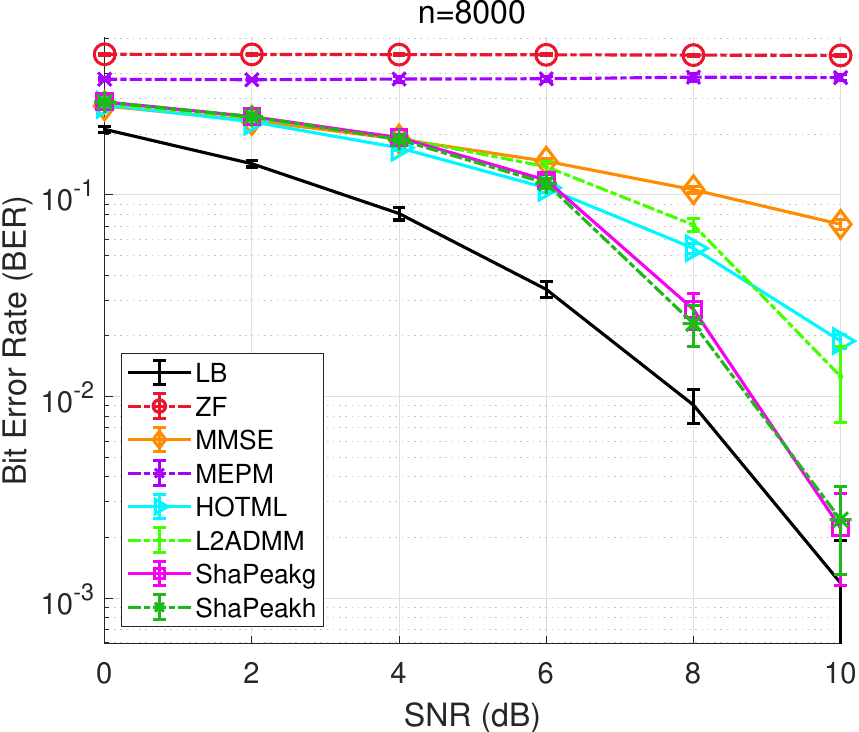}
 	\end{subfigure}~ 
 	\begin{subfigure}[b]{0.49\textwidth}
 		\centering
 		\includegraphics[width=0.99\textwidth]{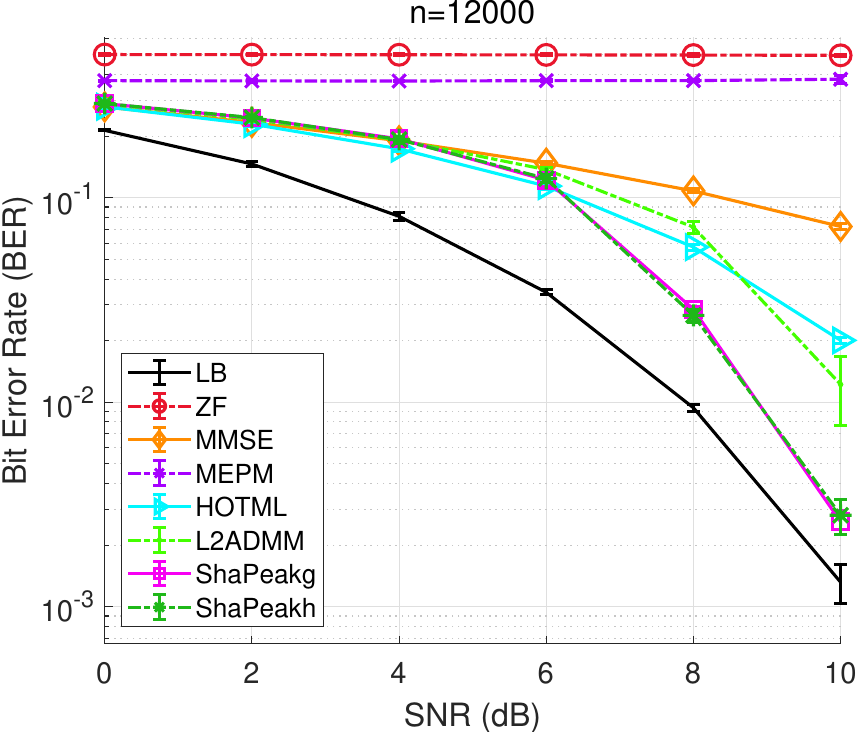}
 	\end{subfigure}
 	
 	\vspace{0.4cm}
 	
 	\begin{subfigure}[b]{0.49\textwidth}
 		\centering
 		\includegraphics[width=0.99\textwidth]{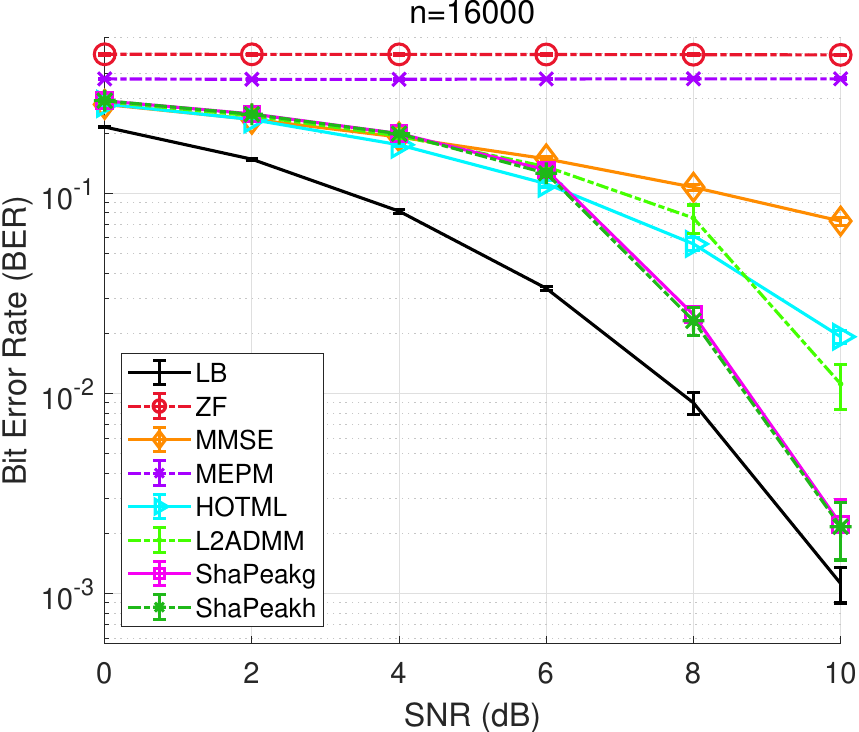} 
 	\end{subfigure}~ 
 	\begin{subfigure}[b]{0.49\textwidth}
 		\centering
 		\includegraphics[width=0.99\textwidth]{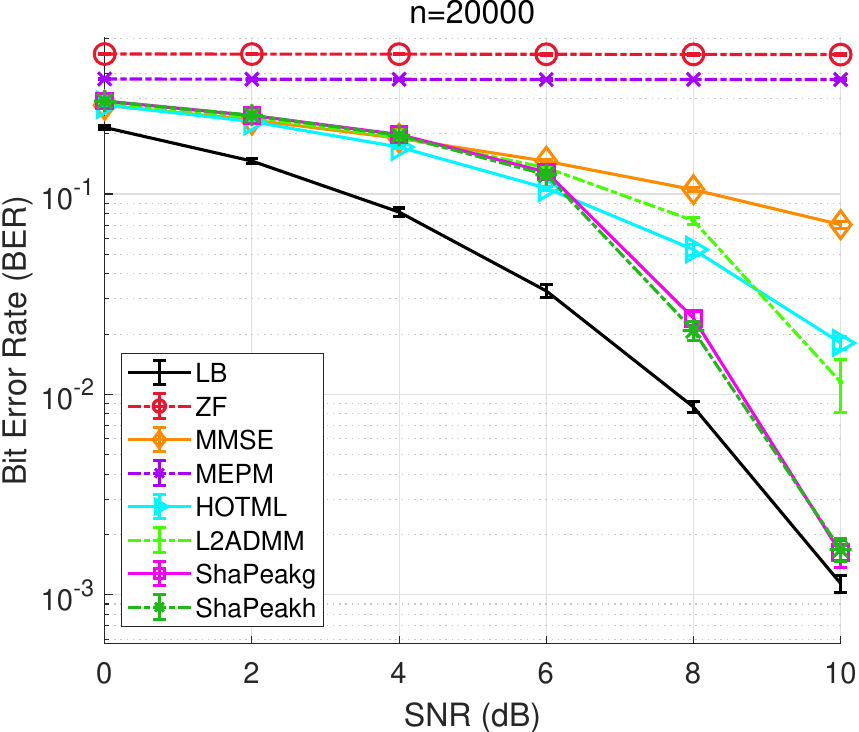}
 	\end{subfigure}
 	
 	\caption{Results for classical MIMO detection problems using IID channels.}
 	\label{fig:BER-iid}
 \end{figure}
 
  \begin{figure}[!t]
 	\centering
 	
 	\begin{subfigure}[b]{0.49\textwidth}
 		\centering
 		\includegraphics[width=0.99\textwidth]{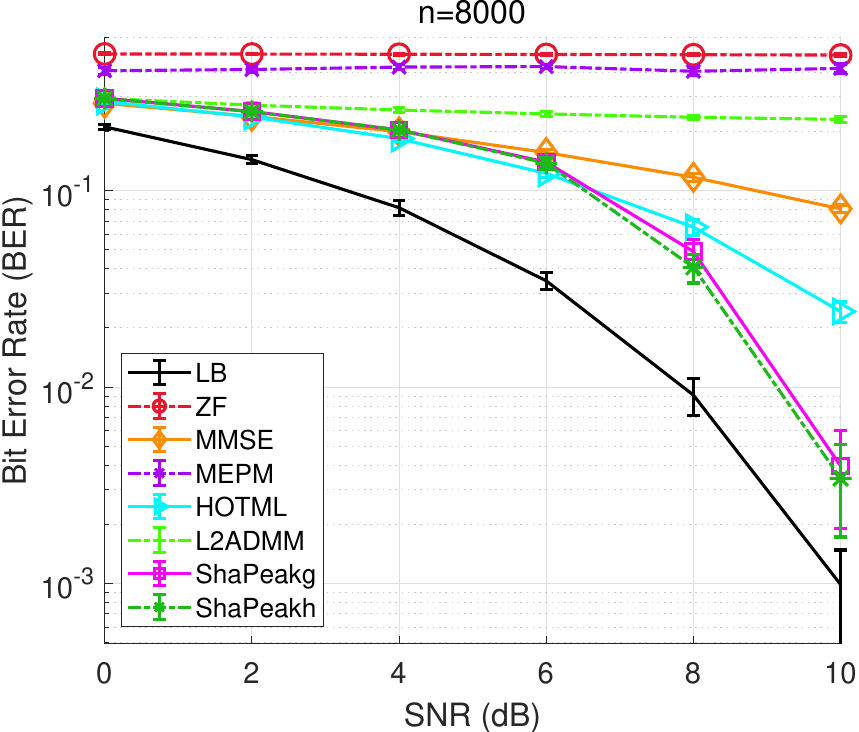}
 	\end{subfigure}~
 	\begin{subfigure}[b]{0.49\textwidth}
 		\centering
 		\includegraphics[width=0.99\textwidth]{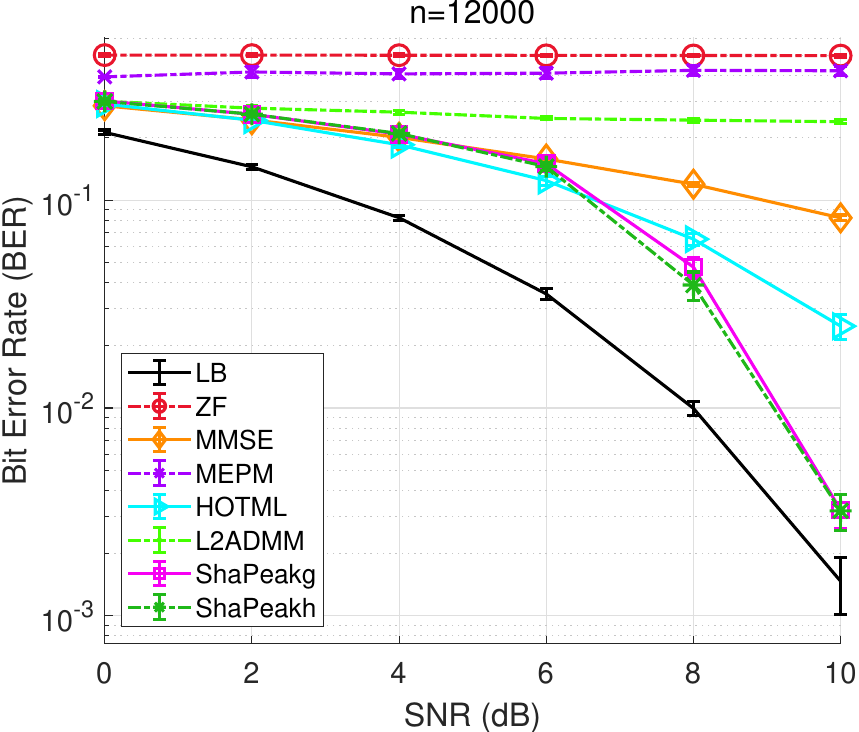}
 	\end{subfigure}
 	
 	\vspace{0.4cm}
 	
 	\begin{subfigure}[b]{0.49\textwidth}
 		\centering
 		\includegraphics[width=0.99\textwidth]{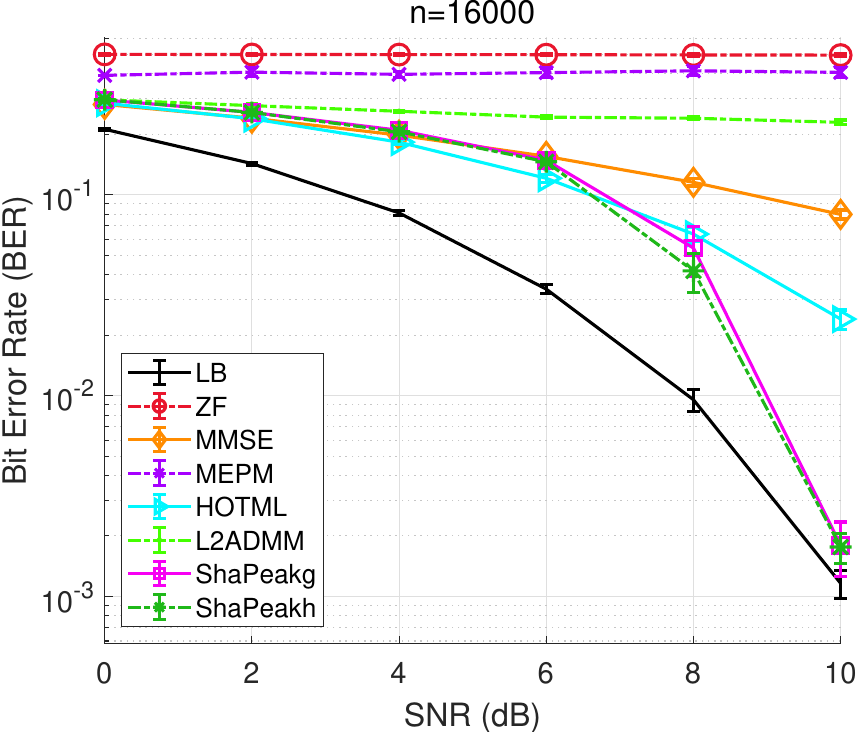} 
 	\end{subfigure}~
 	\begin{subfigure}[b]{0.49\textwidth}
 		\centering
 		\includegraphics[width=0.99\textwidth]{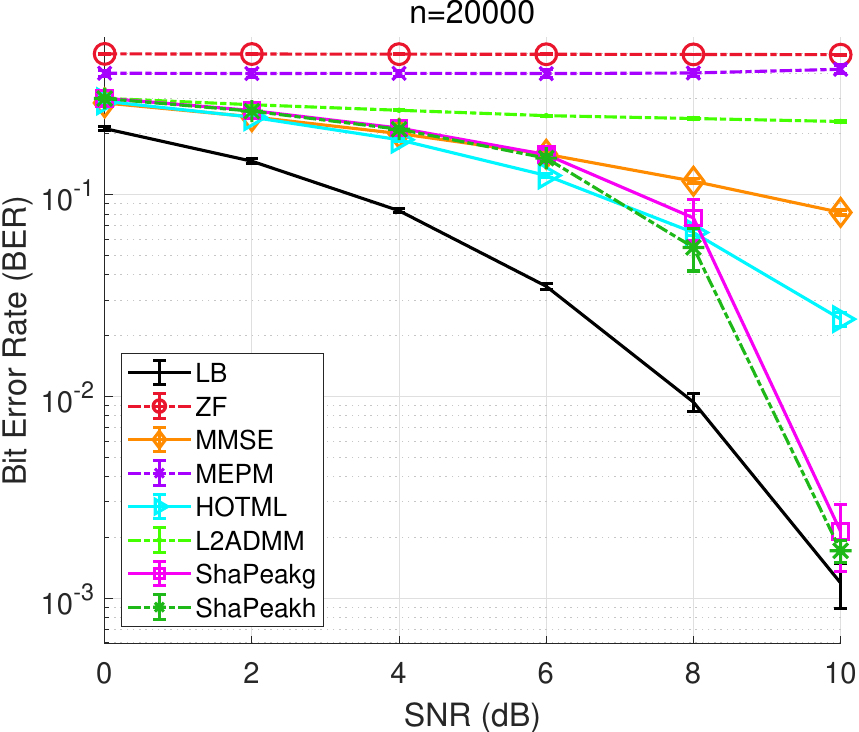}
 	\end{subfigure}
 	
 	\caption{Results for classical MIMO detection problems using correlated channels.} 
 	\label{fig:BER-cor}
 \end{figure}
 
\subsubsection{Data generation} \label{sec:cmimo-data-gen}
We consider two types of channel matrices $\textbf{H}$. The first corresponds to the IID Gaussian channel case, where the entries of ${\rm Re}(\textbf{H})$ and ${\rm Im}(\textbf{H})$ are generated in the same manner as the entries of $\textbf{A}$ in Section~\ref{sec:rec}. The second corresponds to the correlated MIMO channel case, following a similar approach to \cite{loyka2001channel,shao2020binary}. Specifically, let $\widetilde{\textbf{H}}$ be an element-wise IID circularly symmetric complex Gaussian matrix with zero mean and unit variance. Let $\textbf{R}$ and $\textbf{T}$ denote the spatial correlation matrices at the receiver and transmitter, respectively, with each entry $R_{ij}$ of $\textbf{R}$ defined as
$$
R_{ij}= \begin{cases}r^{i-j}, & \text { if } i \leqslant j, \\ {R_{ij}^*,} & \text { if } i > j,\end{cases}
$$
where ${|r| \leqslant 1}$.  The matrix $\textbf{T}$ is generated in the same way. By performing Cholesky-like decompositions ${\textbf{R} = \textbf{P}\textbf{P}^*}$ and ${\textbf{T} = \textbf{Q}\textbf{Q}^*}$, the correlated channel matrix is constructed as $
\textbf{H}=\textbf{P} \widetilde{\textbf{H}} \textbf{Q}
$. In the sequel, we set $r = 0.2$ and $\textbf{R} = \textbf{T}$. Transmit signal $\textbf{w}^*$  is generated element-wise from a uniform distribution over the quaternary phase-shift keying constellation; see \cite{shao2020binary} for more details. The signal-to-noise ratio (SNR), which is used to determine the noise variance $\varrho^2$, is defined as
$$
\mathrm{SNR}=\mathbb{E}\left\|\textbf{H} \textbf{w}^*\right\|^2 / \mathbb{E}\left\|\boldsymbol{\varepsilon}\right\|^2.
$$

\subsubsection{Benchmark methods} We select MEPM, L2ADMM, and HOTML \cite{shao2020binary} for the comparison and also report the results directly calculated by two detectors: zero-forcing detector (ZF) and minimum-mean-square-error (MMSE) decision-feedback detector. Moreover, we assume no inter-signal interference and establish a lower bound (LB) for the problem, as described in \cite{shao2020binary}. Parameters for all algorithms are given as follows. 
{For MEPM and L2ADMM,  parameters are kept the same as those in Section \ref{sec:rec} but set $L=\|\mathbf{A}\|_F^2/n$.}
 For HOTML, set $\sigma_0=0.5$ and $\lambda_0=0.001$. It terminates when either the number of iterations $k>200$ or $\|\lambda_k-\lambda_{k-1}\|\le 10^{-4}$ is satisfied. Once again, for fairness, all algorithms are initialized with $\x^0={\bf 0}$. 
 We evaluate performance using the bit error rate (BER)  defined by
 $$
 \text{BER} = \frac{|\{i\in[n]: x_i \neq x^*_i\}|}{n}, 
 $$
where $|\Omega|$ is the cardinality of set $\Omega$. Note that a lower BER indicates more accurate recovery.

\subsubsection{Numerical comparison}  We consider the case of critically determined channels (i.e., ${m = n/2}$) and choose $n\in\{8000$, $12000$, $16000$, $20000\}$. {Figures~\ref{fig:BER-iid} and~\ref{fig:BER-cor} present the numerical results for the IID~and correlated channels, respectively. All reported results are averaged over 20 independent trials, and the error bars represent the corresponding standard deviation.} For the IID~channel case, ShaPeak achieves the best BER performance, followed by L2ADMM and HOTML, while ZF performs the worst. In the correlated channel setting, ShaPeak continues to attain the lowest BER across most SNR levels, followed by HOTML, whereas L2ADMM fails under this configuration.  {The corresponding mean and the standard deviation of the computational time are reported in Table~\ref{tab:MIMO-time} and Table \ref{tab:MIMO-time-std} (in Appendix A), respectively.} ShaPeak demonstrates a significant advantage in all scenarios. For example, in the IID~channel with SNR = 10 and $n=20000$, ShaPeakg and ShaPeakh complete in 6.045 and 5.934 seconds, respectively, while the other three methods require over 200 seconds.

 \begin{table}[!t]
	\renewcommand{\arraystretch}{1.00}\addtolength{\tabcolsep}{2.25pt}
	\centering
	\caption{Average CPU time (in seconds) for classical MIMO detection, where A1-A5 stand for  ShaPeakg,  ShaPeakh,   MEPM,   L2ADMM, and HOTML, respectively.}
 \begin{tabular}{c ccccc c ccccc}
 			\hline
 			    &  \multicolumn{5}{c}{IID channels} && \multicolumn{5}{c}{Correlated channels} \\
 			\cline{2-6} \cline{8-12}
 			SNR & A1 & A2 & A3 & A4 & A5 && A1 & A2 & A3 & A4 & A5  \\
 			\hline
 			&\multicolumn{11}{c}{$n=8000$}\\\hline
 			 0 & 1.345 & 1.227 & 28.06 & 37.60 & 28.25 && 1.128 & 1.122 & 28.42 & 36.60 & 28.12 \\
			 2 & 1.441 & 1.393 & 28.09 & 38.18 & 26.08 && 1.185 & 1.104 & 27.70 & 36.72 & 25.85 \\
			 4 & 1.438 & 1.463 & 28.17 & 38.22 & 26.12 && 1.264 & 1.039 & 28.42 & 34.68 & 25.86 \\
			 6 & 1.536 & 1.266 & 28.04 & 38.38 & 26.18 && 1.268 & 1.174 & 27.29 & 34.46 & 25.87 \\
			 8 & 1.406 & 1.355 & 27.94 & 38.47 & 26.40 && 1.245 & 1.219 & 27.99 & 34.39 & 25.85 \\
			 10 & 1.129 & 1.123 & 27.98 & 38.38 & 26.12 && 0.930 & 0.916 & 27.93 & 34.44 & 25.84 \\
			 
\hline
	&\multicolumn{11}{c}{$n=12000$}\\\hline
			 0 & 2.324 & 2.051 & 67.15 & 88.25 & 103.8 && 2.901 & 2.844 & 68.21 & 93.14 & 104.8 \\
			 2 & 2.669 & 2.190 & 68.48 & 92.39 & 103.4 && 2.903 & 2.844 & 70.58 & 91.65 & 101.5 \\
			 4 & 2.695 & 2.373 & 68.74 & 92.60 & 103.3 && 2.910 & 2.854 & 70.24 & 86.82 & 101.5 \\
			 6 & 2.298 & 2.069 & 68.24 & 92.21 & 103.4 && 2.867 & 2.864 & 69.64 & 84.01 & 101.5 \\
			 8 & 2.169 & 2.014 & 68.21 & 92.66 & 103.8 && 2.891 & 2.864 & 70.35 & 84.27 & 101.5 \\
			 10 & 1.767 & 1.728 & 68.33 & 92.45 & 103.7 && 2.059 & 2.026 & 68.53 & 85.37 & 101.6 \\
			\hline
				&\multicolumn{11}{c}{$n=16000$}\\\hline
			 0 & 5.668 & 5.625 & 133.9 & 181.5 & 254.5 && 5.652 & 5.614 & 128.4 & 177.5 & 256.9 \\
			 2 & 5.661 & 5.619 & 134.1 & 180.7 & 251.6 && 5.641 & 5.612 & 131.9 & 177.5 & 261.9 \\
			 4 & 5.643 & 5.554 & 134.2 & 180.9 & 251.3 && 5.585 & 5.558 & 132.4 & 157.5 & 261.9 \\
			 6 & 5.675 & 5.609 & 134.1 & 181.8 & 251.3 && 5.655 & 5.574 & 129.9 & 157.3 & 261.8 \\
			 8 & 5.668 & 5.609 & 134.5 & 182.5 & 251.7 && 5.754 & 5.551 & 130.7 & 156.9 & 262.0 \\
			 10 & 4.047 & 3.988 & 133.0 & 179.9 & 253.1 && 3.992 & 3.960 & 127.6 & 159.7 & 261.8 \\
			 
			\hline
				&\multicolumn{11}{c}{$n=20000$}\\\hline
			 0 & 8.474 & 8.421 & 204.8 & 254.3 & 493.9 && 8.492 & 9.915 & 199.7 & 268.5 & 495.6 \\
			 2 & 8.706 & 8.570 & 204.4 & 274.7 & 496.4 && 8.349 & 9.326 & 205.4 & 290.1 & 509.4 \\
			 4 & 8.574 & 8.476 & 204.2 & 275.3 & 496.7 && 8.502 & 8.946 & 204.6 & 263.0 & 509.6 \\
			 6 & 8.610 & 8.435 & 205.0 & 275.3 & 496.2 && 8.431 & 8.203 & 202.5 & 246.4 & 498.9 \\
			 8 & 8.507 & 8.435 & 204.4 & 277.8 & 496.5 && 9.344 & 8.581 & 206.2 & 244.0 & 509.1 \\
			 10 & 6.045 & 5.934 & 204.8 & 276.6 & 497.0 && 5.883 & 5.789 & 206.2 & 258.9 & 509.1 \\
			\hline
		\end{tabular} 
	\label{tab:MIMO-time}
\end{table}

\subsection{One-bit MIMO detection}
The task of one-bit MIMO detection \cite{shao2020binary} is to detect $\textbf{z}\in\{-1,1\}^n$ from the following model,
\begin{equation}
	\label{eq:onebit}
	\textbf{b}=\operatorname{sgn}(\textbf{A} \textbf{z}+\textbf{v}),
\end{equation}
where  sgn$(\cdot)$ denotes the element-wise sign function, $\textbf{A} \in \mathbb{R}^{m \times n}$ is a system matrix, and $\textbf{v} \in \mathbb{R}^m$ is the noise whose entries are IID~variable from ${\cal N}(0,\varrho^2)$. Let $\Phi(t):=\int_{-\infty}^t \frac{1}{\sqrt{2 \pi}} e^{-\tau^2 / 2} d \tau$ and $\textbf{a}_i$ denote the $i$ th row of $\textbf{A}$. Under model \eqref{eq:onebit}, the maximum-likelihood detector takes the form
	\begin{equation}
		\label{eq:onebitprob}
		\min\limits_{\z\in{\R^{n}}}~~ -\sum_{i=1}^m \log \Phi\left( {(b_i /{\varrho})\langle\textbf{a}_i, \z\rangle}\right),~~~
		{\rm s.t.}~~   \z\in \{-1,1\}^{n},
	\end{equation}
By substituting $\x=(\mathbf{z}+1)/2$, one can easily rewrite \eqref{eq:onebitprob} in the form of \eqref{UBIP}. 
\subsubsection{Data generation} 
Let $\H\in\mathbb{C}^{m\times n/2}$ and $\boldsymbol{\varepsilon}\in\mathbb{C}^{m}$, where the entries of $\rm{Re}(\H)$ and $\rm{Im}(\H)$ are IID from ${\cal N}(0,1)$, and  the entries of   $\rm{Re}(\boldsymbol{\varepsilon})$  and $\rm{Im}(\boldsymbol{\varepsilon})$ are IID from ${\cal N}(0,\varrho^2)$. Then, let $\w^*$ be generated the same as that of the classical MIMO detection, see Section \ref{sec:cmimo-data-gen}. Finally, by computing $\y=\operatorname{sgn}( \H\w^*+\boldsymbol{\varepsilon})$, data $\textbf{A}$ and $\textbf{b}$ are obtained according to the form in \eqref{def-A-b-x}.
\subsubsection{Benchmark methods} The methods selected for the comparison include ZF, MEPM, L2ADMM, and HOTML. Parameters are set as follows. 
 For MEPM and L2ADMM, we set $L=\|\mathbf{A}\|_F^2/n$. HOTML adopts the same parameter settings as in the classical MIMO case. Its maximum number of iterations is set to 300.

\subsubsection{Numerical comparison}  We first evaluate the performance of all algorithms under different values of $m/n$. Specifically, we fix ${\text{SNR}=15}$, set ${n \in \{500,1000\}}$, and vary ${m/n \in \{1, 1.5, \ldots, 3\}}$. {Figure~\ref{fig:BER-1bit-effmn} reports the average results with error bars over 20 independent trials}. As expected, a larger ratio results in lower BER, indicating easier recovery. It is evident that ShaPeakg and ShaPeakh consistently achieve the lowest BER across all cases.

	\begin{figure}[!t]
		\centering
		
		\begin{subfigure}[b]{0.49\textwidth}
			\centering
			\includegraphics[width=1\textwidth]{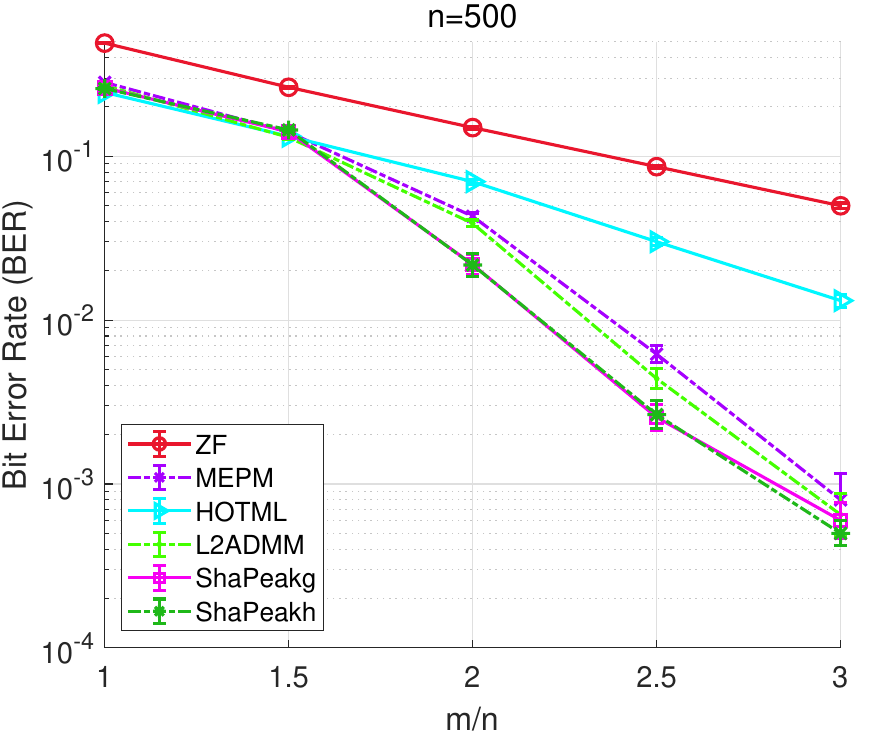}
		\end{subfigure}~~
		\begin{subfigure}[b]{0.49\textwidth}
			\centering
			\includegraphics[width=1\textwidth]{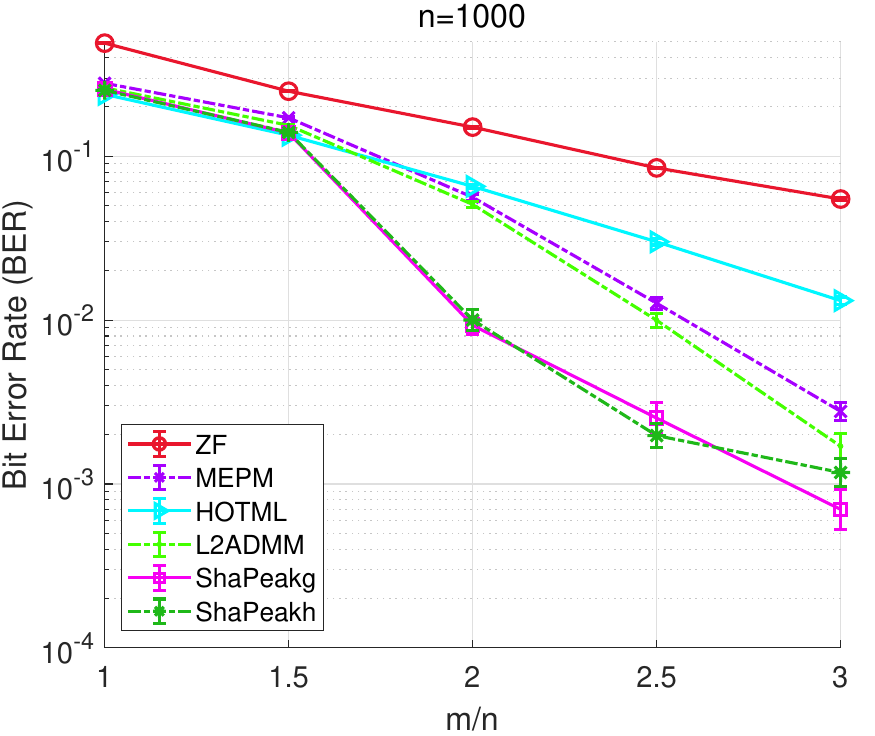}
		\end{subfigure}		
		\caption{Effect of $m/n$ for one-bit MIMO detection problems.}
		\label{fig:BER-1bit-effmn}
	\end{figure}

	\begin{figure}[!t]
		\centering
		
		\begin{subfigure}[b]{0.48\textwidth}
			\centering
			\includegraphics[width=1\textwidth]{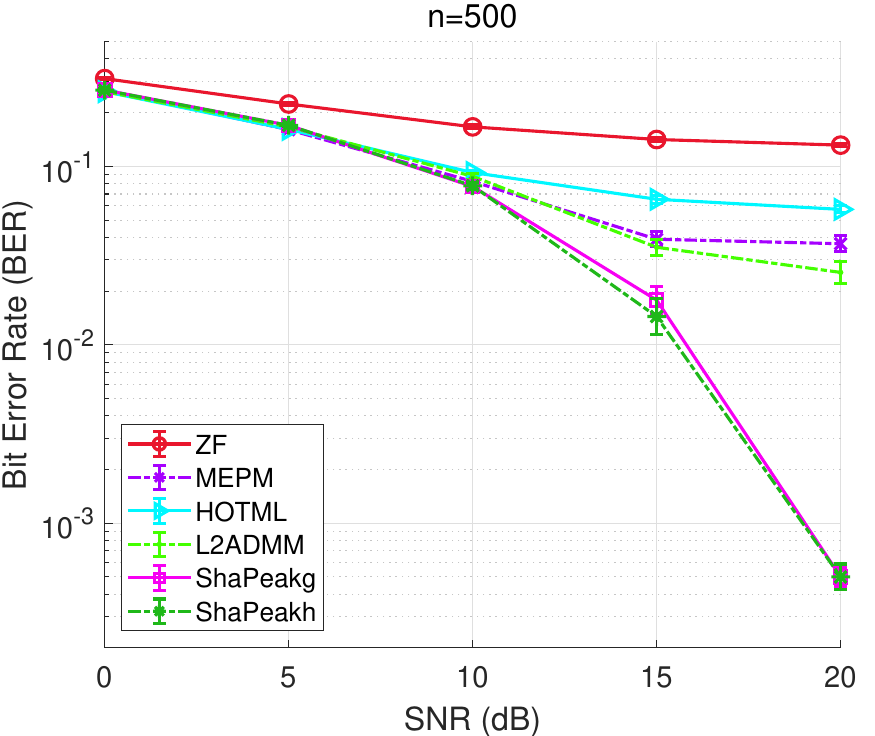}
		\end{subfigure}~~
				\begin{subfigure}[b]{0.48\textwidth}
			\centering
			\includegraphics[width=1\textwidth]{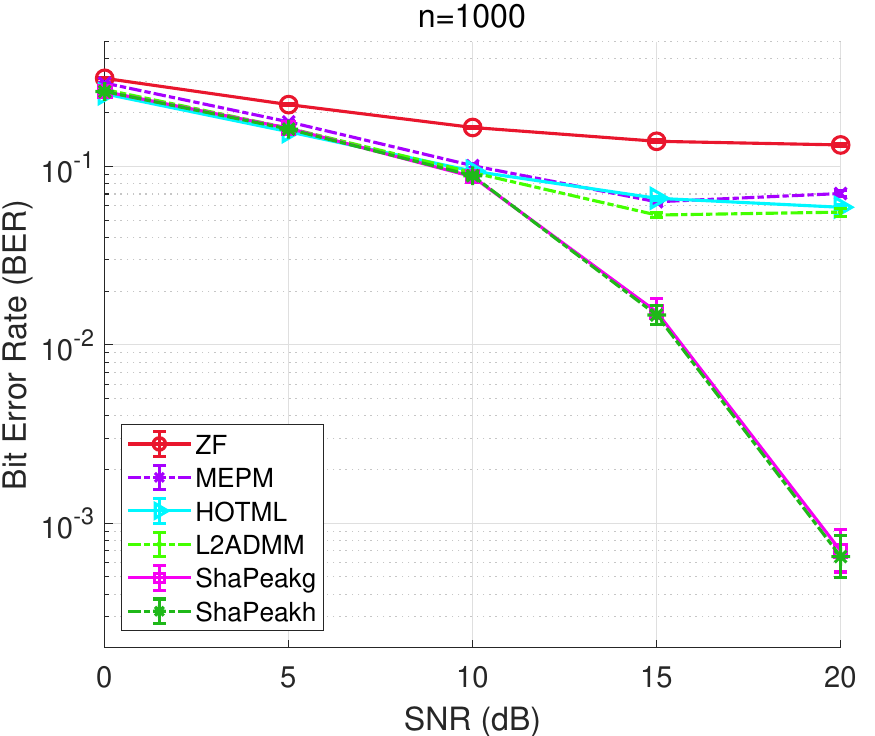}
		\end{subfigure}		
		\caption{Effect of SNR for one-bit MIMO detection problems.}
		\label{fig:BER-1bit}
	\end{figure}

Next, we fix ${m/n = 2}$, choose $n\in\{500,1000\}$, and vary ${\text{SNR} \in \{0, 5, \ldots, 20\}}$. {The average results with error bars over 20 runs are presented in Figure~\ref{fig:BER-1bit}.} When ${\text{SNR} \leqslant 10}$, none of the algorithms achieve satisfactory BER performance, indicating failure in signal recovery. As SNR increases, ShaPeak begins to outperform the other algorithms, with its advantage becoming more pronounced at higher SNR levels. Regarding computational efficiency, we consider a slightly high dimension ${n=5000}$. As reported in Table~\ref{tab:1bitMIMO-time}, ShaPeak demonstrates significantly lower runtime. For instance, in the case of  ${\text{SNR} = 20}$, ShaPeakg and ShaPeakh complete in 10.87 and 10.86 seconds, respectively, while L2ADMM requires nearly 200 seconds.

 \begin{table}[!t]
		\renewcommand{\arraystretch}{1.0}\addtolength{\tabcolsep}{0.8pt}
		\centering
		\caption{Results for one-bit MIMO detection with  higher dimensional instances, where A1-A6 stand for  ShaPeakg,  ShaPeakh,   MEPM,   L2ADMM, HOTML, and ZF, respectively.}
			\begin{tabular}{ccccccccccccc}
				\hline
			  	&\multicolumn{6}{c}{BER}&&\multicolumn{5}{c}{Time(s) }\\\cline{2-7}\cline{9-13}
			  	 SNR   & A1 & A2 & A3 & A4 & A5 & A6  & & A1 & A2 & A3 & A4 & A5  \\ \hline
			\multicolumn{13}{c}{Mean}\\ \hline
				  0 & 0.273 & 0.271 & 0.474 & 0.441 & 0.257 & 0.312 & & 2.044 & 1.949 & 142.9 & 188.4 & 9.978\\
				  5 & 0.171 & 0.173 & 0.340 & 0.289 & 0.157 & 0.224 & & 4.856 & 4.317 & 143.4 & 196.8 & 12.36    \\
				  10 & 0.092 & 0.088 & 0.219 & 0.177 & 0.095 & 0.167 & & 6.825 & 6.744 & 139.8 & 196.3 & 20.21 \\
				  15 & 0.015 & 0.016 & 0.164 & 0.136 & 0.068 & 0.144 & & 9.245 & 8.937 & 144.0 & 195.7 & 20.35   \\
				  20 & 0.001 & 0.001 & 0.154 & 0.138 & 0.064 & 0.134 & & 10.87 & 10.86 & 143.0 & 197.2 & 20.49  \\
				\hline
					\multicolumn{13}{c}{Std}\\ \hline
						0  & 0.005 & 0.005 & 0.005 & 0.006 & 0.006 & 0.008 && 0.502 & 0.460 & 1.340 & 8.144 & 0.067 \\
		5  & 0.007 & 0.008 & 0.004 & 0.010 & 0.007 & 0.006 && 0.825 & 0.741 & 1.607 & 0.827 & 0.127 \\
		10 & 0.006 & 0.005 & 0.006 & 0.006 & 0.005 & 0.005 && 0.830 & 0.847 & 2.025 & 0.377 & 0.156 \\
		15 & 0.004 & 0.008 & 0.007 & 0.012 & 0.003 & 0.007 && 0.473 & 0.550 & 1.750 & 0.922 & 0.046 \\
		20 & 0.001 & 0.001 & 0.007 & 0.007 & 0.003 & 0.005 && 0.906 & 0.783 & 1.648 & 0.626 & 0.029 \\ \hline
			\end{tabular}
		\label{tab:1bitMIMO-time}
	\end{table}

\subsection{Max-Cut}
The Max-Cut problem aims to partition the vertex set of an undirected weighted graph 
$G=(V,E)$ into two disjoint subsets such that the total weight of the edges crossing the two subsets is maximized. 
Let $x_i\in\{0,1\}$ indicate the side of vertex $i\in V$ and $W_{ij}\ge 0$ denote the weight of edge $(i,j)$. Then the Max-Cut problem can be formulated as
\begin{equation}\label{maxcut}
	\max_{\x\in\{0,1\}^n}\ \sum_{(i,j)\in E} W_{ij}(x_i+x_j-2x_ix_j).
\end{equation}
\subsubsection{Data generation} For this problem, we use the Gset benchmark \footnote{Available at \texttt{https://web.stanford.edu/\~{}yyye/yyye/Gset/}.}, which is a widely recognized collection of weighted graph instances for the Max-Cut problem. It contains 71 benchmark instances, including toroidal, planar, and random graphs, with the number of nodes ranging from 800 to 20,000. In our experiments, we mainly focus on the instances with at least 1,000 nodes, namely, G22–G81.

\subsubsection{Benchmark methods} We compare ShaPeak with four state-of-the-art methods: MCPG \cite{chen2023montecarlopolicygradient}, CRA \cite{ichikawa2024controlling}, and PI-GNN \cite{schuetz2022combinatorial}. MCPG is built upon a Monte Carlo policy gradient framework enhanced with a local search procedure. CRA is an annealing-based method that progressively controls the convexity of the objective function, with graph neural networks (GNNs) incorporated to improve performance. PI-GNN is an unsupervised GNN-based approach for graph partitioning problems. All algorithms are restricted to a maximum runtime of one hour. Since these four baseline methods are implemented in Python, we implement ShaPeak in Python as well using JAX 0.6.1 \cite{jax2018github} to ensure a fair and consistent comparison.

To assess performance, we directly use the reported best-known results from BLS \cite{benlic2013breakout}, a heuristic framework that combines local improvement with adaptive perturbation. 
Let $\mathrm{obj}$ denote the objective value obtained by a given algorithm, and let $\mathrm{obj}_{\mathrm{BLS}}$ denote the objective value reported by BLS. Since most of the best-known results on the Gset dataset are attributed to BLS, we evaluate performance using the relative error with respect to these reference values, which is defined by
\begin{eqnarray*}\label{def-gap}
	\text{Gap} = \frac{\text{obj}-\text{obj}_{\mathrm{BLS}}}{|\text{obj}_{\mathrm{BLS}}|} \times 100\%.
\end{eqnarray*}

\subsubsection{Numerical comparison} Our main results focus on the 12 largest graphs in Gset, namely, G60–G67, G70, G72, G77, and G81.  Each of these instances has more than 7,000 nodes and is highly challenging for existing algorithms. As shown in Table \ref{tab:mc_gset_large}, ShaPeakg achieves the lowest gap on 9 of the 12 graphs, indicating better objective values on these instances. In terms of computational time, ShaPeakg also shows a clear advantage. For example, on G81 with 20,000 nodes, ShaPeak requires only 54.87 seconds, whereas CRA, PI-GNN, and MCPG require 361.1 seconds, 187 seconds, and 1115.7 seconds, respectively. The results on smaller scale graphs in Gset can be found in Table \ref{tab:mc_gset_all} in Appendix B.}

\begin{table}[!t]
	\renewcommand{\arraystretch}{1.0}\addtolength{\tabcolsep}{-0.5pt}
	\centering
	\caption{Results for Max-Cut with larger Gset instances.}
	\label{tab:mc_gset_large} 
	\begin{tabular}{lcccccccc}
		\toprule
		& \multicolumn{2}{c}{ShaPeakg} & \multicolumn{2}{c}{CRA} & \multicolumn{2}{c}{PIGNN} & \multicolumn{2}{c}{MCPG} \\
		\cmidrule(lr){2-3}\cmidrule(lr){4-5}\cmidrule(lr){6-7}\cmidrule(lr){8-9}
		Graphs & Gap (\%) & Time (s) & Gap (\%) & Time (s) & Gap (\%) & Time (s) & Gap (\%) & Time (s) \\
		\midrule
		G60 & 0.39 & 78.92 & 2.97 & 314.4 & 12.47 & 412.1 & 0.47 & 393.1 \\
		G61 & 1.12 & 90.20 & 14.49 & 359.6 & 21.07 & 908.5 & 1.28 & 391.9 \\
		G62 & 1.73 & 58.26 & 15.65 & 376.3 & 19.84 & 903.0 & 2.10 & 394.0 \\
		G63 & 3.47 & 52.22 & 3.00 & 288.2 & 11.18 & 568.0 & 0.53 & 399.2 \\
		G64 & 1.37 & 45.97 & 7.19 & 380.9 & 20.48 & 781.3 & 1.10 & 396.6 \\
		G65 & 1.69 & 39.74 & 13.31 & 359.0 & 20.15 & 845.4 & 2.16 & 446.9 \\
		G66 & 2.04 & 42.29 & 13.84 & 368.5 & 19.25 & 935.7 & 2.74 & 518.9 \\
		G67 & 1.73 & 40.29 & 14.47 & 343.1 & 19.94 & 991.0 & 2.48 & 570.3 \\
		G70 & 0.93 & 53.59 & 3.78 & 327.9 & 10.88 & 314.3 & 0.27 & 564.7 \\
		G72 & 1.91 & 60.26 & 13.52 & 393.1 & 19.69 & 868.5 & 2.54 & 576.7 \\
		G77 & 1.83 & 47.28 & 12.15 & 324.9 & 21.44 & 952.4 & 2.50 & 796.8 \\
		G81 & 2.10 & 54.87 & 11.38 & 361.1 & 20.96 & 187.0 & 2.82 & 1115.7 \\
		\bottomrule
	\end{tabular}
\end{table}

\subsection{QUBO}
Let ${\Q\in\R^n}$ be a symmetric matrix.	The QUBO problem takes the following form, 
 \begin{eqnarray}\nonumber
\min\limits_{\x\in{\R^n}}~ \frac{1}{2}\langle \x, \Q\x\rangle,~~~
{\rm s.t.}~~ \x\in \{0,1\}^n.
\end{eqnarray}

\subsubsection{Data generation}   We consider two types of datasets to generate $\Q$. 
\begin{itemize}[leftmargin=13pt]
\item {\bf Simulated instances.} Following the approach in \cite{chen2023montecarlopolicygradient}, we generate the matrix $\textbf{Q}$ as follows. For $n<10^4$, $\textbf{Q}$ is a dense matrix with a density of $0.8$, and its nonzero entries are drawn independently from the uniform distribution over interval $[10, 100]$. For large-scale problems with $n \geqslant 10^4$, $\textbf{Q}$ is generated as a sparse matrix with a density of 0.005.

\item  {\bf BiqBin instances.} Instances from the BiqBin library\footnote{\texttt{http://www.biqbin.eu/Home/BenchmarkInstances}}  is a standard benchmark collection for binary quadratic optimization. In our experiments, we select 100 instances from this library, with variable dimensions ranging from 100 to 250 and densities ranging from 0.1 to 1, so as to provide a more comprehensive evaluation of the proposed method.
 \end{itemize}

\subsubsection{Benchmark methods}  For the simulated instances, the benchmark algorithms include MEPM, L2ADMM, GUROBI, and SDPNAL+ \cite{sun2020sdpnal+}. The hyperparameters are given as follows. For MEPM and L2ADMM, we set ${L = \|\textbf{Q}\|_F^2/n}$, while other parameters remain consistent with those used in previous experiments. Default settings are adopted for GUROBI and SDPNAL+. {All algorithms are restricted to a maximum runtime of 600 seconds if $n\leqslant 10^4$ and one hour otherwise. }
 
 {For the BiqBin instances, we compare ShaPeak with the BiqBin solver. Again, we generate 100 initial points for $\x^0$ from the uniform distribution on $[0,1]$, run the algorithm in batch mode, and report the best result obtained. We use the default parameter settings of the BiqBin solver and set the maximum running time to 600 seconds as well.  }

Let $\mathrm{obj}$ denote the objective value obtained by a given algorithm, and let $\mathrm{lowest}$ refers to the lowest objective value achieved among all compared methods. To evaluate the performance, we compute the relative optimality gap defined as follows:
 \begin{eqnarray*}\label{def-gap-obj}
 	\text{Gap} = \frac{\text{obj}-\text{lowest}}{|\text{lowest}|} \times 100\%.
 \end{eqnarray*} 

 \begin{table}[!t]
	\renewcommand{\arraystretch}{1.0}\addtolength{\tabcolsep}{-1.5pt}
	\centering
	\caption{Gap for QUBO with low dimensional simulated instances.}
	\begin{tabular}{ccc ccc ccc ccc ccc ccc}
		\hline
		 & \multicolumn{2}{c}{ShaPeakg} && \multicolumn{2}{c}{ShaPeakh} && \multicolumn{2}{c}{MEPM} && \multicolumn{2}{c}{L2ADMM}&& \multicolumn{2}{c}{GUROBI} && \multicolumn{2}{c}{SDPNAL+} \\ \cline{2-3}\cline{5-6}\cline{8-9}\cline{11-12}\cline{14-15}\cline{17-18}
		$n$& best & mean && best & mean && best & mean && best & mean && best & mean && best & mean \\ \hline
		1000 & 0.19 & 0.58 && 0.18 & 0.55 && 1.54 & 4.64 && 0.81 & 2.62 && 0.00 & 0.00 && 3.86 & 5.27 \\
		3000 & 0.46 & 0.59 && 0.46 & 0.59 && 3.64 & 4.08 && 3.27 & 3.64 && 0.00 & 0.00 && 5.01 & 6.02 \\
		5000 & 0.39 & 0.52 && 0.41 & 0.55 && 4.17 & 4.54 && 3.69 & 4.10 && 0.00 & 0.00 && 8.41 & 10.0 \\
		7000 & 0.28 & 0.41 && 0.24 & 0.39 && 4.85 & 5.11 && 4.35 & 4.62 && 0.00 & 0.00 && 42.5 & 47.5 \\
		10000 & 0.24 & 0.37 && 0.21 & 0.37 && 5.32 & 5.81 && 4.84 & 5.47 && 0.00 & 0.00 && 64.3 & 67.9 \\ \hline
	\end{tabular}
	\label{tab:qubo-gap-low}
\end{table} 

\begin{table}[!t]
	\renewcommand{\arraystretch}{1.0}\addtolength{\tabcolsep}{5.5pt}
	\centering
	\caption{Time (in seconds) for QUBO with  low dimensional simulated instances.}
	\begin{tabular}{ccccccc}
		\hline
		$n$ & ShaPeakg & ShaPeakh & MEPM & L2ADMM & GUROBI & SDPNAL+ \\ \hline
		1000 & 0.062 & 0.053 & 0.337 & 0.447 & 600.0 & 176.6 \\
		3000 & 0.299 & 0.271 & 2.956 & 3.605 & 600.0 & 600.0 \\
		5000 & 1.296 & 1.224 & 21.65 & 26.38 & 600.0 & 600.0 \\
		7000 & 2.748 & 2.696 & 47.50 & 58.01 & 600.0 & 600.0 \\
		10000 & 5.854 & 5.934 & 109.5 & 130.6 & 600.0 & 600.0 \\ \hline
	\end{tabular}
	\label{tab:qubo-time-low}
\end{table}

\begin{table}[!t]
	\renewcommand{\arraystretch}{1.05}\addtolength{\tabcolsep}{-3.5pt}
	\centering
	\caption{Results for QUBO with higher dimensional simulated instances.}
	\begin{tabular}{ccccccccccccccccc}
		\hline
		& &  \multicolumn{3}{c}{ShaPeakg} && \multicolumn{3}{c}{ShaPeakh} && \multicolumn{3}{c}{MEPM} && \multicolumn{3}{c}{L2ADMM} \\ \cline{3-5}\cline{7-9}\cline{11-13}\cline{15-17}
		$n$ & Case & Best & Mean & Time(s) && Best & Mean & Time(s) && Best & Mean & Time(s) && Best & Mean & Time(s) \\
		\hline
		\multirow{5}{*}{8e4} & 1 & 0.01 & 0.02 & 33.59 && 0.00 & 0.00 & 33.29 && 1.55 & 1.59 & 601.1 && 1.30 & 1.31 & 778.7 \\
		 & 2 & 0.00 & 0.01 & 32.72 && 0.00 & 0.01 & 33.48 && 1.59 & 1.62 & 615.8 && 1.32 & 1.34 & 783.2 \\
		 & 3 & 0.00 & 0.02 & 32.65 && 0.00 & 0.00 & 32.70 && 1.56 & 1.58 & 620.5 && 1.28 & 1.31 & 770.7 \\
		 & 4 & 0.00 & 0.01 & 33.07 && 0.00 & 0.00 & 33.64 && 1.59 & 1.60 & 615.6 && 1.30 & 1.32 & 786.3 \\
		 & 5 & 0.00 & 0.01 & 32.98 && 0.00 & 0.00 & 32.91 && 1.60 & 1.61 & 630.9 && 1.33 & 1.35 & 793.9 \\
		 \hline
		\multirow{5}{*}{1e5} & 1 & 0.00 & 0.01 & 51.73 && 0.00 & 0.00 & 50.84 && 1.69 & 1.72 & 993.6 && 1.41 & 1.43 & 1253.1 \\
		 & 2 & 0.01 & 0.02 & 52.19 && 0.00 & 0.00 & 51.94 && 1.72 & 1.76 & 1011.4 && 1.44 & 1.47 & 1266.1 \\
		 & 3 & 0.00 & 0.01 & 52.09 && 0.00 & 0.00 & 51.53 && 1.73 & 1.75 & 998.8 && 1.45 & 1.46 & 1254.0 \\
		 & 4 & 0.00 & 0.01 & 51.84 && 0.00 & 0.00 & 51.33 && 1.71 & 1.77 & 1000.2 && 1.41 & 1.47 & 1258.0 \\
		 & 5 & 0.00 & 0.01 & 52.31 && 0.00 & 0.00 & 50.98 && 1.64 & 1.70 & 1001.4 && 1.36 & 1.41 & 1250.9 \\
		\hline
	\end{tabular}
	\label{tab:qubo-high}
\end{table}

\begin{table}[!t]
\renewcommand{\arraystretch}{1.0}\addtolength{\tabcolsep}{4pt}
	\centering
	\caption{Results for QUBO with larger-scale BiqBin instances .}
	\label{tab:qubo_large_BB}
	\begin{tabular}{lccccccc}
		\toprule
		& & \multicolumn{2}{c}{ShaPeakg} & \multicolumn{2}{c}{ShaPeakh} & \multicolumn{2}{c}{BiqBin} \\
		\cmidrule(lr){3-4}\cmidrule(lr){5-6}\cmidrule(lr){7-8}
		Group & Case & Gap (\%) & Time (s) & Gap (\%) & Time (s) & Gap (\%) & Time (s) \\
		\midrule
		be250  & 1  & 0.00 & 11.84 & 0.00 & 11.26 & 0.00 & 78.59 \\
		be250  & 2  & 0.00 & 10.48 & 0.00 & 10.17 & 0.00 & 62.09 \\
		be250  & 3  & 0.00 & 10.53 & 0.00 & 10.30 & 0.00 & 32.71 \\
		be250  & 4  & 0.00 & 10.90 & 0.00 & 11.49 & 0.00 & 69.92 \\
		be250  & 5  & 0.00 & 11.91 & 0.00 & 10.32 & 0.00 & 82.59 \\
		be250  & 6  & 0.00 & 10.03 & 0.00 & 10.10 & 0.00 & 51.24 \\
		be250  & 7  & 0.00 & 10.80 & 0.00 & 10.84 & 0.00 & 22.34 \\
		be250  & 8  & 0.00 & 11.00 & 0.00 & 10.48 & 0.00 & 69.28 \\
		be250  & 9  & 0.00 & 10.81 & 0.00 & 10.33 & 0.00 & 104.4 \\
		be250  & 10 & 0.00 & 10.96 & 0.00 & 10.57 & 0.00 & 90.12 \\
		\midrule
		bqp250 & 1  & 0.00 & 12.48 & 0.00 & 11.84 & 0.00 & 69.51 \\
		bqp250 & 2  & 0.00 & 11.82 & 0.00 & 11.34 & 0.00 & 36.95 \\
		bqp250 & 3  & 0.00 & 12.78 & 0.00 & 13.51 & 0.00 & 31.91 \\
		bqp250 & 4  & 0.00 & 14.22 & 0.00 & 12.40 & 0.00 & 62.38 \\
		bqp250 & 5  & 0.00 & 13.34 & 0.00 & 12.79 & 0.00 & 55.65 \\
		bqp250 & 6  & 0.00 & 12.76 & 0.00 & 11.68 & 0.00 & 111.8 \\
		bqp250 & 7  & 0.00 & 12.77 & 0.00 & 12.13 & 0.00 & 71.73 \\
		bqp250 & 8  & 0.17 & 11.97 & 0.17 & 11.63 & 0.00 & 612.8 \\
		bqp250 & 9  & 0.00 & 12.65 & 0.00 & 13.69 & 0.00 & 72.98 \\
		bqp250 & 10 & 0.00 & 11.55 & 0.00 & 11.74 & 0.00 & 57.72 \\
		\bottomrule
	\end{tabular}
\end{table}

\subsubsection{Numerical comparison} For the simulated instances,  Tables~\ref{tab:qubo-gap-low} and \ref{tab:qubo-time-low} report results over 20 independent runs for small-scale problems, where `best' and `mean' denote the best and average Gap. In all scenarios, GUROBI achieves the smallest gap, while ShaPeakg and  ShaPeakh rank next and exhibit a significant speed advantage.  We then extend the comparison to higher dimensional settings, ${n=80000}$ and ${n=10^5}$, excluding GUROBI and SDPNAL+ due to excessive computational time.  As shown in Table~\ref{tab:qubo-high}, where case 1 - case 5 correspond to $5$ different proportions $\{0.4995, 0.4999, 0.49995, 0.49999, 0.5\}$ of negative elements in matrix $\Q$, ShaPeakg and  ShaPeakh achieve lower objective values with faster computational speed than MEPM and L2ADMM. For instance, in case 5 and  ${n = 10^5}$, ShaPeakg and ShaPeakh consume 52.31 and 50.98 seconds, compared with 1001.4 and 1250.9 seconds for MEPM and L2ADMM. 

{For the BiqBin instances, our main results focus on the 20 larger instances, namely, be250.1–be250.10 and bqp250.1–bqp250.10, each of which has 250 nodes.  As shown in Table \ref{tab:qubo_large_BB}, ShaPeak attains the same objective value as BiqBin on 19 out of the 20 instances, with the only exception being bqp250.8. In terms of running time, ShaPeak also outperforms BiqBin. For example, on be250.9, both ShaPeakg and ShaPeakh require less than 20 seconds, whereas BiqBin needs 104.4 seconds. The complete results are reported in Table \ref{tab:biqbin_results} in Appendix C.}

\section{Conclusion}

The proposed sharp-peak functions reformulate binary integer programming problems into constrained optimization problems, allowing continuous optimization techniques to address inherently discrete challenges. This class of functions not only leads to a new exact penalty theorem but also enhances the algorithm's performance theoretically and numerically. {To demonstrate its effectiveness, we conducted numerical experiments on signal recovery, MIMO detection, Max-Cut, and QUBO problems. Nevertheless, the current experimental validation is still limited, and a broader empirical study on more diverse and larger-scale benchmarks would be valuable. Future work includes generalizing the framework to mixed-integer optimization with additional constraints, and developing distributed algorithms for large-scale problems.}

\section*{Acknowledgments}
This work is funded by the  National Key R\&D Program of China (No. 2023YFA1011100 and No. 2024YFA1012901) and the Natural Science Foundation of China (No.12501455). The authors are grateful to the two anonymous referees for their constructive comments, which have helped to improve the quality of the paper.

\vskip 0.2in
\bibliographystyle{abbrv}

\bibliography{refs}

\newpage
\appendix

\newpage

\section*{Appendix A: Variance for classical MIMO detection} \label{std-mimo}

{\renewcommand{\arraystretch}{1.1}\addtolength{\tabcolsep}{1.5pt}
\begin{longtable}{c ccccc c ccccc}	
	\caption{Standard deviation of CPU time (in seconds) for classical MIMO detection, where A1-A5 stand for ShaPeakg, ShaPeakh, MEPM, L2ADMM, and HOTML, respectively.}\label{tab:MIMO-time-std}
\\\midrule 
		&  \multicolumn{5}{c}{IID channels} && \multicolumn{5}{c}{Correlated channels} \\
		\cmidrule(lr){2-6} \cmidrule(lr){8-12}
		SNR & A1 & A2 & A3 & A4 & A5 && A1 & A2 & A3 & A4 & A5  \\

	\midrule
    \endfirsthead
	\toprule
	&  \multicolumn{5}{c}{IID channels} && \multicolumn{5}{c}{Correlated channels} \\	
	\cmidrule(lr){2-6} \cmidrule(lr){8-12}	
	SNR & A1 & A2 & A3 & A4 & A5 && A1 & A2 & A3 & A4 & A5  \\
	\endhead
	\midrule
	\multicolumn{12}{r}{Continued on next page} \\
	\midrule
	\endfoot
	\bottomrule
	\endlastfoot
	
		&\multicolumn{11}{c}{$n=8000$}\\\midrule
		0 & 0.171 & 0.136 & 0.135 & 0.139 & 0.555 && 0.087 & 0.127 & 0.789 & 0.186 & 0.607 \\
		2 & 0.189 & 0.197 & 0.176 & 0.125 & 0.424 && 0.151 & 0.132 & 0.822 & 2.870 & 0.445 \\
		4 & 0.206 & 0.192 & 0.220 & 0.166 & 0.541 && 0.185 & 0.264 & 0.707 & 2.002 & 0.655 \\
		6 & 0.180 & 0.169 & 0.200 & 0.528 & 0.822 && 0.141 & 0.160 & 0.689 & 0.217 & 0.423 \\
		8 & 0.151 & 0.141 & 0.598 & 0.175 & 0.445 && 0.118 & 0.118 & 0.935 & 0.055 & 0.579 \\
		10 & 0.185 & 0.182 & 0.665 & 0.191 & 0.415 && 0.025 & 0.112 & 0.795 & 0.045 & 0.681 \\\midrule
		&\multicolumn{11}{c}{$n=12000$}\\\midrule
		0 & 0.081 & 0.078 & 1.760 & 2.440 & 2.472 && 0.006 & 0.009 & 0.219 & 0.139 & 0.524 \\
		2 & 0.080 & 0.081 & 2.044 & 1.257 & 3.549 && 0.010 & 0.008 & 1.256 & 6.210 & 0.493 \\
		4 & 0.012 & 0.005 & 0.270 & 0.241 & 1.124 && 0.013 & 0.013 & 1.868 & 1.100 & 1.211 \\
		6 & 0.023 & 0.009 & 0.170 & 0.222 & 3.019 && 0.049 & 0.011 & 1.679 & 0.277 & 0.764 \\
		8 & 0.007 & 0.067 & 0.267 & 0.270 & 1.455 && 0.011 & 0.014 & 1.525 & 0.487 & 1.762 \\
		10 & 0.035 & 0.005 & 0.561 & 0.089 & 1.395 && 0.011 & 0.034 & 1.747 & 4.571 & 0.926 \\\midrule
		&\multicolumn{11}{c}{$n=16000$}\\\midrule
		0 & 0.205 & 0.213 & 4.912 & 7.376 & 14.902 && 0.097 & 0.114 & 3.367 & 4.514 & 4.706 \\
		2 & 0.211 & 0.145 & 3.314 & 4.894 & 7.433 && 0.104 & 0.131 & 5.471 & 14.100 & 4.107 \\
		4 & 0.124 & 0.129 & 3.786 & 4.961 & 9.391 && 0.119 & 0.120 & 3.070 & 4.330 & 2.379 \\
		6 & 0.127 & 0.134 & 4.159 & 5.493 & 6.441 && 0.039 & 0.064 & 3.710 & 10.807 & 4.133 \\
		8 & 0.135 & 0.130 & 3.980 & 5.363 & 6.521 && 0.081 & 0.031 & 3.914 & 0.994 & 3.726 \\
		10 & 0.093 & 0.150 & 3.896 & 6.736 & 6.104 && 0.021 & 0.063 & 3.691 & 4.157 & 1.758 \\\midrule
		&\multicolumn{11}{c}{$n=20000$}\\\midrule
		0 & 0.354 & 0.398 & 9.508 & 12.238 & 20.959 && 1.475 & 1.867 & 2.971 & 1.067 & 6.156 \\
		2 & 0.341 & 0.331 & 9.675 & 11.203 & 18.576 && 1.281 & 1.711 & 2.501 & 15.984 & 8.138 \\
		4 & 0.340 & 0.197 & 5.120 & 6.917 & 21.079 && 1.107 & 1.524 & 2.932 & 21.131 & 5.542 \\
		6 & 0.187 & 0.191 & 5.050 & 6.617 & 13.004 && 1.519 & 1.685 & 3.100 & 2.129 & 6.291 \\
		8 & 0.186 & 0.186 & 4.628 & 6.939 & 13.103 && 1.031 & 1.122 & 2.575 & 0.759 & 8.785 \\
		10 & 0.247 & 0.229 & 8.736 & 13.264 & 22.332 && 0.478 & 0.102 & 4.547 & 18.098 & 4.120 \\
\end{longtable}}

\newpage

%
%

\section*{Appendix B: Complete results for Max-Cut}\label{com-re-maxcut}

{\renewcommand{\arraystretch}{1.0}\addtolength{\tabcolsep}{-0.5pt}
\begin{longtable}{lcccccccc}
	\caption{Complete  results for Max-Cut with Gset instances .}\label{tab:mc_gset_all}\\
	\toprule
	& \multicolumn{2}{c}{ShaPeakg} & \multicolumn{2}{c}{CRA} & \multicolumn{2}{c}{PIGNN} & \multicolumn{2}{c}{MCPG} \\
	\cmidrule(lr){2-3}\cmidrule(lr){4-5}\cmidrule(lr){6-7}\cmidrule(lr){8-9}
	Graphs & Gap (\%) & Time (s) & Gap (\%) & Time (s) & Gap (\%) & Time (s) & Gap (\%) & Time (s) \\
	\midrule
	\endfirsthead
	\toprule
	& \multicolumn{2}{c}{ShaPeakg} & \multicolumn{2}{c}{CRA} & \multicolumn{2}{c}{PIGNN} & \multicolumn{2}{c}{MCPG} \\
	\cmidrule(lr){2-3}\cmidrule(lr){4-5}\cmidrule(lr){6-7}\cmidrule(lr){8-9}
	Graphs & Gap (\%) & Time (s) & Gap (\%) & Time (s) & Gap (\%) & Time (s) & Gap (\%) & Time (s) \\
	\midrule
	\endhead
	\midrule
	\multicolumn{9}{r}{Continued on next page} \\
	\midrule
	\endfoot
	\bottomrule
	\endlastfoot
	G22 & 0.04 & 60.44 & 1.81 & 288.2 & 8.16 & 307.9 & 0.00 & 107.3 \\
	G23 & 0.06 & 67.31 & 1.76 & 298.8 & 7.91 & 317.1 & 0.06 & 108.8 \\
	G24 & 0.10 & 68.17 & 2.05 & 359.4 & 8.02 & 255.0 & 0.05 & 111.3 \\
	G25 & 0.10 & 55.55 & 1.63 & 348.0 & 7.50 & 277.3 & 0.06 & 110.8 \\
	G26 & 0.16 & 61.75 & 2.27 & 337.9 & 8.27 & 301.6 & 0.03 & 110.5 \\
	G27 & 0.45 & 45.50 & 17.78 & 453.2 & 21.31 & 836.0 & 0.03 & 111.5 \\
	G28 & 0.21 & 65.00 & 18.92 & 491.8 & 22.41 & 962.5 & 0.03 & 110.1 \\
	G29 & 0.44 & 64.36 & 19.44 & 437.8 & 19.85 & 995.3 & 0.35 & 110.6 \\
	G30 & 0.44 & 56.94 & 19.49 & 475.4 & 20.90 & 990.0 & 0.00 & 109.3 \\
	G31 & 0.54 & 57.19 & 20.04 & 485.9 & 21.09 & 973.8 & 0.09 & 110.4 \\
	G32 & 1.13 & 92.73 & 18.01 & 391.2 & 15.74 & 796.3 & 0.57 & 110.7 \\
	G33 & 1.01 & 63.76 & 18.96 & 388.0 & 18.23 & 932.3 & 0.72 & 109.3 \\
	G34 & 0.29 & 50.40 & 18.79 & 429.3 & 18.06 & 849.9 & 0.58 & 112.4 \\
	G35 & 0.53 & 70.49 & 3.05 & 337.9 & 8.64 & 231.3 & 0.21 & 113.3 \\
	G36 & 0.47 & 82.99 & 3.16 & 345.7 & 9.26 & 298.4 & 0.25 & 117.1 \\
	G37 & 0.78 & 79.02 & 2.82 & 358.4 & 8.23 & 309.0 & 0.27 & 117.6 \\
	G38 & 0.39 & 73.71 & 3.11 & 311.6 & 8.49 & 370.6 & 0.23 & 117.6 \\
	G39 & 1.04 & 56.91 & 12.17 & 409.6 & 18.52 & 677.2 & 0.00 & 116.8 \\
	G40 & 1.33 & 56.86 & 12.08 & 442.3 & 20.71 & 883.1 & 0.04 & 110.2 \\
	G41 & 0.96 & 64.08 & 12.02 & 443.4 & 19.67 & 778.7 & 0.00 & 110.9 \\
	G42 & 0.81 & 79.60 & 11.21 & 470.5 & 20.35 & 764.1 & 0.00 & 112.5 \\
	G43 & 0.36 & 31.04 & 1.83 & 352.0 & 6.82 & 218.0 & 0.00 & 61.08 \\
	G44 & 0.09 & 41.78 & 1.52 & 354.6 & 7.28 & 206.6 & 0.00 & 58.65 \\
	G45 & 0.26 & 42.07 & 1.98 & 361.7 & 7.50 & 199.7 & 0.00 & 58.48 \\
	G46 & 0.08 & 45.00 & 1.28 & 348.1 & 7.70 & 253.8 & 0.00 & 59.96 \\
	G47 & 0.12 & 40.58 & 2.27 & 371.2 & 7.96 & 173.4 & 0.00 & 57.63 \\
	G48 & 0.00 & 12.07 & 2.30 & 325.6 & 15.53 & 292.1 & 0.00 & 174.2 \\
	G49 & 0.00 & 11.87 & 1.73 & 315.4 & 17.00 & 284.5 & 0.00 & 163.7 \\
	G50 & 0.03 & 107.8 & 2.01 & 318.7 & 14.15 & 314.1 & 0.03 & 165.2 \\
	G51 & 0.34 & 67.36 & 3.01 & 357.1 & 11.28 & 357.0 & 0.08 & 56.83 \\
	G52 & 0.44 & 48.56 & 2.75 & 336.2 & 9.35 & 292.2 & 0.03 & 57.46 \\
	G53 & 0.47 & 52.56 & 2.91 & 387.2 & 8.73 & 277.8 & 0.05 & 56.82 \\
	G54 & 0.36 & 59.82 & 2.83 & 334.3 & 10.62 & 277.9 & 0.03 & 56.55 \\
	G55 & 0.38 & 61.67 & 2.80 & 322.4 & 12.02 & 311.5 & 0.42 & 279.0 \\
	G56 & 1.07 & 61.00 & 15.00 & 386.4 & 21.11 & 941.1 & 0.80 & 279.4 \\
	G57 & 1.60 & 51.20 & 15.75 & 413.9 & 19.30 & 933.7 & 1.72 & 275.2 \\
	G58 & 0.60 & 102.2 & 2.88 & 321.5 & 10.25 & 448.9 & 0.49 & 276.9 \\
	G59 & 1.22 & 87.57 & 8.14 & 423.1 & 21.88 & 976.8 & 0.67 & 281.2 \\
	G60 & 0.39 & 78.92 & 2.97 & 314.4 & 12.47 & 412.1 & 0.47 & 393.1 \\
	G61 & 1.12 & 90.20 & 14.49 & 359.6 & 21.07 & 908.5 & 1.28 & 391.9 \\
	G62 & 1.73 & 58.26 & 15.65 & 376.3 & 19.84 & 903.0 & 2.10 & 394.0 \\
	G63 & 3.47 & 52.22 & 3.00 & 288.2 & 11.18 & 568.0 & 0.53 & 399.2 \\
	G64 & 1.37 & 45.97 & 7.19 & 380.9 & 20.48 & 781.3 & 1.10 & 396.6 \\
	G65 & 1.69 & 39.74 & 13.31 & 359.0 & 20.15 & 845.4 & 2.16 & 446.9 \\
	G66 & 2.04 & 42.29 & 13.84 & 368.5 & 19.25 & 935.7 & 2.74 & 518.9 \\
	G67 & 1.73 & 40.29 & 14.47 & 343.1 & 19.94 & 991.0 & 2.48 & 570.3 \\
	G70 & 0.93 & 53.59 & 3.78 & 327.9 & 10.88 & 314.3 & 0.27 & 564.7 \\
	G72 & 1.91 & 60.26 & 13.52 & 393.1 & 19.69 & 868.5 & 2.54 & 576.7 \\
	G77 & 1.83 & 47.28 & 12.15 & 324.9 & 21.44 & 952.4 & 2.50 & 796.8 \\
	G81 & 2.10 & 54.87 & 11.38 & 361.1 & 20.96 & 187.0 & 2.82 & 1115.7 \\
\end{longtable}
}

\section*{Appendix C: Complete results for  QUBO with BiqBin instances}\label{com-BiqBin}

{\renewcommand{\arraystretch}{1.0}\addtolength{\tabcolsep}{3.5pt}
\begin{longtable}{lccccccc}
	\caption{Complete results for QUBO with all BiqBin instances.}
	\label{tab:biqbin_results}\\
	\toprule
	& & \multicolumn{2}{c}{ShaPeakg} & \multicolumn{2}{c}{ShaPeakh} & \multicolumn{2}{c}{BiqBin} \\
	\cmidrule(lr){3-4}\cmidrule(lr){5-6}\cmidrule(lr){7-8}
	Group & Case & Gap (\%) & Time (s) & Gap (\%) & Time (s) & Gap (\%) & Time (s) \\
	\midrule
	\endfirsthead
	
	\toprule
	& & \multicolumn{2}{c}{ShaPeakg} & \multicolumn{2}{c}{ShaPeakh} & \multicolumn{2}{c}{BiqBin} \\
	\cmidrule(lr){3-4}\cmidrule(lr){5-6}\cmidrule(lr){7-8}
	Group & Case & Gap (\%) & Time (s) & Gap (\%) & Time (s) & Gap (\%) & Time (s) \\
	\midrule
	\endhead
	
	\midrule
	\multicolumn{8}{r}{Continued on next page} \\
	\midrule
	\endfoot
	
	\bottomrule
	\endlastfoot
	
	be100 & 1 & 0.00 & 12.89 & 0.00 & 14.28 & 0.00 & 6.55 \\
	be100 & 2 & 0.00 & 10.40 & 0.00 & 11.52 & 0.00 & 3.90 \\
	be100 & 3 & 0.00 & 11.24 & 0.00 & 11.33 & 0.00 & 3.65 \\
	be100 & 4 & 0.00 & 10.61 & 0.00 & 11.90 & 0.00 & 7.71 \\
	be100 & 5 & 0.00 & 10.35 & 0.00 & 10.59 & 0.00 & 5.41 \\
	be100 & 6 & 0.00 & 11.83 & 0.00 & 12.24 & 0.00 & 3.16 \\
	be100 & 7 & 0.00 & 11.29 & 0.00 & 11.09 & 0.00 & 5.79 \\
	be100 & 8 & 0.00 & 10.91 & 0.00 & 11.66 & 0.00 & 6.69 \\
	be100 & 9 & 0.00 & 10.73 & 0.00 & 11.06 & 0.00 & 8.24 \\
	be100 & 10 & 0.00 & 11.22 & 0.00 & 11.88 & 0.00 & 4.71 \\\midrule
	
	be120.3 & 1 & 0.00 & 8.22 & 0.00 & 8.94 & 0.00 & 9.44 \\
	be120.3 & 2 & 0.00 & 8.65 & 0.00 & 9.39 & 0.00 & 3.39 \\
	be120.3 & 3 & 0.00 & 8.70 & 0.00 & 9.11 & 0.00 & 4.26 \\
	be120.3 & 4 & 0.00 & 9.53 & 0.00 & 9.43 & 0.00 & 5.66 \\
	be120.3 & 5 & 0.00 & 8.22 & 0.00 & 8.98 & 0.00 & 4.30 \\
	be120.3 & 6 & 0.00 & 9.67 & 0.00 & 9.87 & 0.00 & 3.02 \\
	be120.3 & 7 & 0.00 & 9.00 & 0.00 & 9.44 & 0.00 & 2.80 \\
	be120.3 & 8 & 0.00 & 8.52 & 0.00 & 9.21 & 0.00 & 2.16 \\
	be120.3 & 9 & 0.00 & 7.90 & 0.00 & 8.58 & 0.00 & 6.36 \\
	be120.3 & 10 & 0.00 & 8.04 & 0.00 & 8.70 & 0.00 & 6.93 \\\midrule
	
	be120.8 & 1 & 0.00 & 11.19 & 0.00 & 11.98 & 0.00 & 21.15 \\
	be120.8 & 2 & 0.00 & 11.09 & 0.00 & 10.94 & 0.00 & 8.34 \\
	be120.8 & 3 & 0.00 & 11.01 & 0.00 & 10.80 & 0.00 & 8.88 \\
	be120.8 & 4 & 0.00 & 11.63 & 0.00 & 12.40 & 0.00 & 16.18 \\
	be120.8 & 5 & 0.00 & 11.58 & 0.00 & 11.22 & 0.00 & 7.96 \\
	be120.8 & 6 & 0.00 & 11.00 & 0.00 & 10.70 & 0.00 & 9.74 \\
	be120.8 & 7 & 0.00 & 10.69 & 0.00 & 11.12 & 0.00 & 26.94 \\
	be120.8 & 8 & 0.00 & 12.40 & 0.00 & 12.26 & 0.00 & 25.12 \\
	be120.8 & 9 & 0.00 & 11.42 & 0.00 & 11.26 & 0.00 & 15.18 \\
	be120.8 & 10 & 0.00 & 11.18 & 0.00 & 11.95 & 0.00 & 10.57 \\\midrule
	
	be150.3 & 1 & 0.00 & 10.95 & 0.00 & 10.47 & 0.00 & 10.92 \\
	be150.3 & 2 & 0.00 & 11.62 & 0.00 & 10.31 & 0.00 & 13.22 \\
	be150.3 & 3 & 0.00 & 10.20 & 0.00 & 9.70 & 0.00 & 7.30 \\
	be150.3 & 4 & 0.00 & 10.55 & 0.00 & 9.64 & 0.00 & 18.51 \\
	be150.3 & 5 & 0.00 & 9.81 & 0.00 & 9.93 & 0.00 & 15.49 \\
	be150.3 & 6 & 0.00 & 10.58 & 0.00 & 11.34 & 0.00 & 42.69 \\
	be150.3 & 7 & 0.00 & 10.35 & 0.00 & 9.87 & 0.00 & 16.22 \\
	be150.3 & 8 & 0.00 & 10.81 & 0.00 & 10.35 & 0.00 & 56.98 \\
	be150.3 & 9 & 0.00 & 9.82 & 0.00 & 9.20 & 0.00 & 54.51 \\
	be150.3 & 10 & 0.00 & 10.44 & 0.00 & 10.15 & 0.00 & 51.30 \\\midrule
	
	be150.8 & 1 & 0.00 & 12.82 & 0.00 & 13.09 & 0.00 & 33.68 \\
	be150.8 & 2 & 0.00 & 13.13 & 0.00 & 12.46 & 0.00 & 106.3 \\
	be150.8 & 3 & 0.00 & 13.47 & 0.00 & 13.03 & 0.00 & 70.63 \\
	be150.8 & 4 & 0.00 & 12.84 & 0.00 & 13.94 & 0.00 & 17.61 \\
	be150.8 & 5 & 0.00 & 14.96 & 0.00 & 13.39 & 0.00 & 43.20 \\
	be150.8 & 6 & 0.00 & 13.71 & 0.00 & 13.02 & 0.00 & 71.78 \\
	be150.8 & 7 & 0.14 & 13.97 & 0.14 & 13.52 & 0.00 & 59.65 \\
	be150.8 & 8 & 0.00 & 13.28 & 0.00 & 13.54 & 0.00 & 36.62 \\
	be150.8 & 9 & 0.00 & 11.52 & 0.00 & 13.85 & 0.00 & 85.70 \\
	be150.8 & 10 & 0.00 & 14.17 & 0.00 & 13.23 & 0.00 & 32.76 \\\midrule
	
	be200.3 & 1 & 0.00 & 13.07 & 0.00 & 12.04 & 0.00 & 618.3 \\
	be200.3 & 2 & 0.00 & 11.91 & 0.00 & 10.99 & 0.00 & 448.7 \\
	be200.3 & 3 & 0.00 & 11.92 & 0.00 & 11.47 & 0.00 & 116.8 \\
	be200.3 & 4 & 0.00 & 12.70 & 0.00 & 13.04 & 0.00 & 413.4 \\
	be200.3 & 5 & 0.00 & 12.48 & 0.00 & 12.15 & 0.00 & 307.5 \\
	be200.3 & 6 & 0.00 & 12.49 & 0.00 & 11.99 & 0.00 & 136.1 \\
	be200.3 & 7 & 0.00 & 12.26 & 0.00 & 11.60 & 0.00 & 21.50 \\
	be200.3 & 8 & 0.00 & 12.36 & 0.00 & 11.52 & 0.00 & 291.9 \\
	be200.3 & 9 & 0.12 & 12.94 & 0.12 & 12.65 & 0.00 & 299.7 \\
	be200.3 & 10 & 0.00 & 11.22 & 0.00 & 10.69 & 0.00 & 621.5 \\\midrule
	
	be200.8 & 1 & 0.00 & 15.64 & 0.00 & 14.85 & 0.00 & 173.0 \\
	be200.8 & 2 & 0.12 & 15.56 & 0.12 & 14.96 & 0.00 & 619.1 \\
	be200.8 & 3 & 0.00 & 15.88 & 0.00 & 14.66 & 0.00 & 611.9 \\
	be200.8 & 4 & 0.00 & 17.04 & 0.00 & 16.41 & 0.00 & 423.2 \\
	be200.8 & 5 & 0.00 & 14.41 & 0.00 & 15.18 & 0.00 & 626.5 \\
	be200.8 & 6 & 0.00 & 15.30 & 0.00 & 15.46 & 0.00 & 64.91 \\
	be200.8 & 7 & 0.00 & 14.37 & 0.00 & 14.97 & 0.00 & 291.2 \\
	be200.8 & 8 & 0.00 & 14.79 & 0.00 & 15.46 & 0.00 & 641.2 \\
	be200.8 & 9 & 0.00 & 17.46 & 0.00 & 15.61 & 0.00 & 67.70 \\
	be200.8 & 10 & 0.00 & 14.85 & 0.00 & 15.13 & 0.00 & 455.8 \\\midrule
	
	be250 & 1 & 0.00 & 11.84 & 0.00 & 11.26 & 0.00 & 78.59 \\
	be250 & 2 & 0.00 & 10.48 & 0.00 & 10.17 & 0.00 & 62.09 \\
	be250 & 3 & 0.00 & 10.53 & 0.00 & 10.30 & 0.00 & 32.71 \\
	be250 & 4 & 0.00 & 10.90 & 0.00 & 11.49 & 0.00 & 69.92 \\
	be250 & 5 & 0.00 & 11.91 & 0.00 & 10.32 & 0.00 & 82.59 \\
	be250 & 6 & 0.00 & 10.03 & 0.00 & 10.10 & 0.00 & 51.24 \\
	be250 & 7 & 0.00 & 10.80 & 0.00 & 10.84 & 0.00 & 22.34 \\
	be250 & 8 & 0.00 & 11.00 & 0.00 & 10.48 & 0.00 & 69.28 \\
	be250 & 9 & 0.00 & 10.81 & 0.00 & 10.33 & 0.00 & 104.4 \\
	be250 & 10 & 0.00 & 10.96 & 0.00 & 10.57 & 0.00 & 90.12 \\\midrule
 
	bqp100 & 1 & 0.00 & 10.10 & 0.00 & 11.08 & 0.00 & 1.37 \\
	bqp100 & 2 & 0.00 & 9.97 & 0.00 & 9.55 & 0.00 & 1.83 \\
	bqp100 & 3 & 0.00 & 10.42 & 0.00 & 10.21 & 0.00 & 0.91 \\
	bqp100 & 4 & 0.00 & 10.26 & 0.00 & 9.96 & 0.00 & 0.73 \\
	bqp100 & 5 & 0.00 & 10.46 & 0.00 & 10.12 & 0.00 & 1.04 \\
	bqp100 & 6 & 0.00 & 11.20 & 0.00 & 10.76 & 0.00 & 4.59 \\
	bqp100 & 7 & 0.00 & 11.07 & 0.00 & 11.92 & 0.00 & 1.33 \\
	bqp100 & 8 & 0.00 & 12.73 & 0.00 & 10.86 & 0.00 & 1.25 \\
	bqp100 & 9 & 0.00 & 10.67 & 0.00 & 10.16 & 0.00 & 0.79 \\
	bqp100 & 10 & 0.00 & 11.64 & 0.00 & 10.07 & 0.00 & 1.78 \\\midrule
	
	bqp250 & 1 & 0.00 & 12.48 & 0.00 & 11.84 & 0.00 & 69.51 \\
	bqp250 & 2 & 0.00 & 11.82 & 0.00 & 11.34 & 0.00 & 36.95 \\
	bqp250 & 3 & 0.00 & 12.78 & 0.00 & 13.51 & 0.00 & 31.91 \\
	bqp250 & 4 & 0.00 & 14.22 & 0.00 & 12.40 & 0.00 & 62.38 \\
	bqp250 & 5 & 0.00 & 13.34 & 0.00 & 12.79 & 0.00 & 55.65 \\
	bqp250 & 6 & 0.00 & 12.76 & 0.00 & 11.68 & 0.00 & 111.8 \\
	bqp250 & 7 & 0.00 & 12.77 & 0.00 & 12.13 & 0.00 & 71.73 \\
	bqp250 & 8 & 0.17 & 11.97 & 0.17 & 11.63 & 0.00 & 612.8 \\
	bqp250 & 9 & 0.00 & 12.65 & 0.00 & 13.69 & 0.00 & 72.98 \\
	bqp250 & 10 & 0.00 & 11.55 & 0.00 & 11.74 & 0.00 & 57.72 \\
	
\end{longtable}}



\end{document}